\newcommand{\kry}{\mathrm{K}}  % Maximum Krylov space size
\newcommand{\dm}{n}  % Problem dimension
\newcommand{\Rd}{\mathbb{R}^\dm}
\title{BayesCG as an uncertainty aware version of CG\thanks{Submitted to the editors \funding{The work was supported in part by NSF grant DMS-1745654 
(TWR, ICFI), NSF grant DMS-1760374 and
DOE grant DE-SC0022085
(ICFI), and the Lloyd's Register Foundation Programme on Data Centric
Engineering at the Alan Turing Institute (CJO).}}
}
\author{Tim W. Reid\thanks{Department of Mathematics, North Carolina State University, Raleigh, NC 27695-8205, USA, (twreid@alumni.ncsu.edu, ipsen@ncsu.edu)}
\and 
Ilse C. F. Ipsen\footnotemark[2]
\and
Jon Cockayne\thanks{Department of Mathematical Sciences, University of Southampton, Southampton SO17 1BJ, UK (jon.cockayne@soton.ac.uk)}
\and 
Chris J. Oates\thanks{School of Mathematics, Statistics and Physics, Newcastle University, Newcastle upon Tyne NE1 7RU, 
UK (chris.oates@ncl.ac.uk)}
}
\begin{document}
\maketitle

\begin{abstract}
The Bayesian Conjugate Gradient method (BayesCG) is a probabilistic generalization of 
the Conjugate Gradient method 
(CG) for solving linear systems with real symmetric positive definite coefficient matrices. Our CG-based implementation of BayesCG under a structure-exploiting
prior distribution represents an 'uncertainty-aware' version of CG.
Its output consists of CG iterates and posterior
covariances that can be propagated to subsequent computations. The covariances
have low-rank and are maintained in factored form. This allows easy generation of accurate
samples to probe uncertainty in downstream computations.
Numerical experiments confirm the effectiveness of the low-rank posterior covariances.
\end{abstract}

\begin{keywords}
Symmetric positive semi-definite matrix, Krylov space method, Gaussian probability distribution, Bayesian inference,
covariance matrix, mean, Moore-Penrose inverse, projectors in semi-definite inner products
\end{keywords}

\begin{AMS}
65F10, 62F15, 65F50, 15A06, 15A10
\end{AMS}

\section{Introduction}
\label{S:Intro}

%\textcolor{blue}{For example, $\xvec_*$ may represent represent coefficients of finite elements in an approximate solution to a PDE and $f(\xvec_*)$ might be the first time a threshold is exceeded by the solution of the PDE.}

The solution of linear systems
\begin{equation}\label{Eq:Axb}
\Amat\xvec_* = \bvec,
\end{equation}
with symmetric positive definite coefficient matrix $\Amat\in\R^{n\times n}$ is an important problem in 
computational science and engineering. For large and sparse matrices $\Amat$,
the preferred solver is 
the Conjugate Gradient method (CG)  \cite{Hestenes,Liesen}. This is a 
Krylov subspace method that, starting from a user-specified initial guess $\xvec_0$,
produces iterates $\xvec_m$ that, the user hopes, ultimately converge to the solution $\xvec_*$. In practice, CG
is terminated early, once the residual $\|\bvec - \Amat\xvec_m\|$ is sufficiently small
in some norm. Early termination 
introduces a source of uncertainty since the solution~$\xvec_*$ has not been exactly computed.

We seek to create an `uncertainty aware' version of CG that 
models the uncertainty in our knowledge of $\xvec_*$  due to 
early termination. From the UQ perspective, this 
represents an instance of model discrepancy with epistemic uncertainties.
Our motivation is to understand
how the accuracy of the CG output $\xvec_m$ 
affects downstream computations in a
\textit{computational pipeline}
\cite[Section 5]{Cockayne:BPNM}, \cite{HOG15}, that is, sequences of computations 
where the output of one computation
is the input to another \cite{CIOR20,HBH20,NW06,PZSHG12,SHB21}.
Traditional normwise CG error estimates are inadequate, because subsequent computations may not be able to make effective use of them. In contrast,
a probabilistic model of the uncertainty, in the form of a distribution,
can be propagated so that downstream computations can sample from the distribution 
to probe the effect of uncertainty on their own computations.

This is the mission of  \emph{probabilistic numerics}\footnote{https://www.probabilistic-numerics.org/}:
Modelling the uncertainty in deterministic computations
with a probabilistic treatment of the errors \cite{HOG15,Oates}.
The origins of probabilistic numerics
can be traced back to Poincar\'{e}~\cite{Oates}, while a rigorous modern perspective is established 
in~\cite{Cockayne:BPNM}. Probabilistic numerical 
methods have been developed for Bayesian optimization \cite{Mockus75}, 
subsequently applied to hyperparameter optimization in machine learning \cite{SLA12};
numerical integration \cite{BOGOS19,GKH20,KOS18}, sparse Cholesky decompositions \cite{SSO21}, and solution of ordinary and partial differential equations \cite{COSG17,MM21,OCAG19,TKSH19}.

In the context of  linear solvers,
probabilistic solvers posit a \textit{prior distribution} representing initial epistemic uncertainty about a quantity of interest, which can be the solution \cite{Bartels,CIOR20,Cockayne:BCG,WH20} or the matrix inverse \cite{Bartels,BH2016,Hennig}. They then condition on the finite amount of information obtained during $m$ iterations to  produce a \textit{posterior distribution} that reflects the reduced uncertainty \cite[Section 1.2]{Cockayne:BCG}, \cite{Oates}. 
The interpretation of CG as a probabilistic solver was pioneered 
in the context of optimization \cite{Hennig}, followed by the development of
the \emph{Bayesian Conjugate Gradient method (BayesCG)}  \cite{Cockayne:BCG}
as a general purpose solver in statistics.
However, current versions of BayesCG have two drawbacks:
they are computationally expensive; and
their posterior distributions do not model the uncertainty
accurately.

\subsection{Contributions and outline}
We propose 
an efficient uncertainty-aware CG implementation in the form of BayesCG
(Algorithm \ref{A:BayesCGWithoutBayesCG}), and establish
 its proper foundation within probabilistic numerics (sections \ref{S:BayesCG}
and~\ref{S:KrylovPrior}).

We design a new \textit{Krylov prior} distribution for BayesCG, which is 
motivated by the \textit{Krylov subspace prior}
\cite[section 4.1]{Cockayne:BCG}, which is
a \textit{non-singular} structured prior based on Krylov spaces,
whose posterior distributions are expensive and not always meaningful. 
In contrast, our new Krylov 
prior is generally singular, depends on quantities computed by CG, and 
produces low-rank posteriors that lend themselves to efficient 
sampling in downstream computations.
We proceed in two steps.

\begin{enumerate}
\item Extension of BayesCG to singular prior covariances (section~\ref{S:BayesCG}). \\
We show that under reasonable assumptions, the theoretical and computational properties of BayesCG from \cite{Cockayne:BCG} extend to prior covariances that are singular.
This extension to singular priors 
paves the way 
for an efficient BayesCG implementation that produces meaningful posteriors.
Auxiliary results and technical proofs are postponed to the end
(Appendices \ref{S:BayesCGProofs} and \ref{S:Aux}). 

\item Introduction of the new Krylov prior and its properties (section~\ref{S:KrylovPrior}). \\
This singular prior covariance exploits structure and adapts to BayesCG,
with posteriors whose means
are identical to the corresponding CG iterates,
and whose covariances describe a realistic level of uncertainty. 
The posterior covariances are maintained in factored form, and are
therefore highly accurate and easy to approximate, as confirmed by numerical experiments 
(section~\ref{S:Experiments}).
\end{enumerate}

\begin{comment}
\subsection{Overview}
I think the overview will be somewhat redundant with the contributions section so I have not updated it.

\begin{quote}
We extend the applicability of BayesCG to singular priors, derive recursions for 
the posteriors, and express the BayesCG posteriors as projections (section~\ref{S:BayesCG}).
Motivated by the `Krylov subspace prior' \cite[Section 4.1]{Cockayne:BCG}, we introduce a different 
`Krylov prior'
 under which the BayesCG posterior means are identical to the CG iterates and 
the posterior covariances can be computed efficiently.
This in turn leads to an efficient implementation of BayesCG under the Krylov prior (\sref{S:KrylovPrior}).
After reviewing existing approaches to CG error estimation, we introduce 
our \textit{$S$-statistic}, which estimates the error in the BayesCG iterates; the subsequent
approximation of the $S$-statistic by a 95 percent credible interval avoids the cost of sampling altogether
(section~\ref{S:EstError}). 
Numerical experiments on matrices of small and large dimensions illustrate the accuracy of the $S$-statistic
(section~\ref{S:Experiments}). Remarks about future work conclude the paper (section~\ref{s_conc}).
\end{quote}
\end{comment}

\subsection{Notation}
Bold uppercase letters, like $\Amat$, represent matrices, with $\Imat$ denoting the identity. 
The Moore-Penrose inverse of $\Amat$ is $\Amat^{\dagger}$. 
Bold lowercase letters,  like $\xvec_*$, represent vectors;
italic lowercase  letters, like~$\alpha$, scalars; and
italic uppercase letters, like $\Xrv_0$, random variables.
A multivariate Gaussian distribution with mean~$\xvec$ and covariance $\Sigmat$ is denoted by 
$\N(\xvec,\Sigmat)$, and
$\Xrv\sim\N(\xvec,\Sigmat)$ is a Gaussian random variable. 
We assume 
exact arithmetic throughout the theoretical sections \ref{S:BayesCG} and~\ref{S:KrylovPrior}.

\section{Introduction to BayesCG with singular priors}
\label{S:BayesCG}
We extend the applicability of BayesCG from definite to semi-definite prior covariances,
and discuss the theory (\sref{S:BayesCGTheory}), recursive computation
of posterior distributions  (\sref{S:Algorithm}), and choices for prior distributions (\sref{S:PriorChoice}). 

\subsection{Theoretical properties of BayesCG under singular priors}
\label{S:BayesCGTheory}
We derive expressions for the BayesCG posterior means and covariances under 
singular priors (\tref{T:XmSigmTheory}), express the posteriors in terms of projectors (\tref{T:KrylovProj}), 
and
establish the optimality of the posterior means (\tref{T:KrylovOpt}).
The proofs are analogous to earlier proofs for non-singular priors in \cite{Bartels,Cockayne:BCG}, and relegated to \appref{S:BayesCGProofs} and the supplement.

BayesCG computes posterior distributions  $\N(\xvec_m,\Sigmat_m)$ by conditioning the prior  
$\N(\xvec_0,\Sigmat_0)$ on information from $m\leq n$ linearly independent search directions $\Smat_m$. Specifically, the posterior is the distribution of the random variable $\Xrv\sim\N(\xvec_0,\Sigmat_0)$ conditioned on the random variable $\Yrv = \Smat_m^T\Amat\Xrv$ taking the value $\Smat_m^T\Amat\xvec_*$.
The conditioning  relies on two properties of Gaussian distributions:\\
 (i) \textit{Stability:} linear transformations of Gaussians remain Gaussian \cite[Section 1.2]{Muirhead}.\\
(ii) \textit{Conjugacy:} posteriors from Gaussian priors conditioned under linear information remain Gaussian  \cite[Theorem 6.20]{Stuart:BayesInverse}.
%Conditioning of $\Yrv = \Smat_m^T\Amat\Xrv$ on taking the value $\Smat_m^T\Amat\xvec_*$ represents linear information 
%because $\Yrv$ depends linearly on $\Xrv$.

We start with the extension of  BayesCG to singular priors.

\begin{theorem}[{Extension of \cite[Proposition 1]{Cockayne:BCG}}]
  \label{T:XmSigmTheory}
  Let  $\N(\xvec_0,\Sigmat_0)$ be a prior with a symmetric positive semi-definite covariance
  $\Sigmat_0\in\Rnn$. Let $m \leq \rank(\Sigmat_0)$, and
  let the matrix of search directions $\Smat_ m\equiv \begin{bmatrix} \svec_1 &  \cdots & \svec_m \end{bmatrix}\in\R^{\dm\times m}$ have linearly independent columns  so that $\Lammat_m \equiv \Smat_m^T\Amat\Sigmat_0\Amat\Smat_m$ is non-singular. Then the BayesCG posterior 
  $\N(\xvec_m,\Sigmat_m)$ has mean and covariance
  \begin{align}
    \xvec_m &= \xvec_0 + \Sigmat_0 \Amat\Smat_m \Lammat_m^{-1} \Smat_m^T(\bvec-\Amat\xvec_0) \label{Eq:XmTheory}\\
    \Sigmat_m &= \Sigmat_0 -  \Sigmat_0 \Amat\Smat_m \Lammat_m^{-1} \Smat_m^T \Amat \Sigmat_0. \label{Eq:SigmTheory}
  \end{align}
\end{theorem}

\begin{proof}
See supplement.
\end{proof}

\begin{remark}
\tref{T:XmSigmTheory} requires the \textit{existence} of search directions 
that produce a nonsingular $\Lammat_m$, and the purpose this theorem is to 
derive an expression for how to compute the posterior distribution resulting from any
valid set of search directions. Section~\ref{S:Algorithm}
presents the recursive computation of search directions that make $\Lammat_m$ nonsingular, while the supplement presents an example of a
a non-recursive construction.
\end{remark}

Next we derive explicit 
expressions for the posterior covariances in terms of orthogonal projectors onto $\range(\Sigmat_0\Amat\Smat_m)$. To this end we exploit
the close relation between 
Gaussian conditioning and orthogonal projections
\cite[Section 3]{Bartels}; and  generalize the notion of projector 
\cite[page 111] {StS90} to semi-definite inner products
to allow for singular priors $\Sigmat_0$,

\begin{definition}[{\cite[section 0.6.1]{HornJohnson13}}]
  \label{D:OrthProj}
Let  $\Bmat\in\Rnn$ be symmetric positive semi-definite, and $\Pmat\in\Rnn$. If  $\Pmat^2=\Pmat$ and $(\Bmat\Pmat)^T=\Bmat\Pmat$, then $\Pmat$ is a 
\emph{$\Bmat$-orthogonal projector}, with  $(\Imat-\Pmat)^T\Bmat\Pmat=\vzero$.
\end{definition}

Now we are ready to express the posterior distributions in \tref{T:XmSigmTheory} 
in terms of $\Sigmat_0^\dagger$-orthogonal projectors.

\begin{theorem}[{Extension of \cite[Proposition 3]{BCG:Rejoinder}}]\label{T:KrylovProj}
Under the assumptions of Theorem~\ref{T:XmSigmTheory}
  \begin{equation}
    \label{Eq:PostProj}
    \Pmat_m \equiv\Sigmat_0\Amat\Smat_m\Lammat_m^{-1}\Smat_m^T\Amat\Sigmat_0\Sigmat_0^\dagger
  \end{equation}
  is a $\Sigmat_0^{\dagger}$-orthogonal projector onto $K_m\equiv\range(\Sigmat_0\Amat\Smat_m)$.

If additionally $\xvec_*-\xvec_0\in\range(\Sigmat_0)$, then the posterior  satisfies
 \begin{align*}
\xvec_m &= (\Imat - \Pmat_m)\xvec_0 + \Pmat_m\xvec_*\\
\Sigmat_m & = (\Imat-\Pmat_m) \Sigmat_0, \qquad \Pmat_m\Sigmat_m=\vzero.
  \end{align*}
\end{theorem}

\begin{proof}
See \appref{S:BayesCGProofs}.
\end{proof}

\tref{T:KrylovProj} expresses the posterior mean $\xvec_m$ 
as the sum of two projections: the projection of the solution~$\xvec_*$ onto $\range(\Pmat_m)$,
and the projection of the prior mean~$\xvec_0$ onto the complementary
space $\range(\Pmat_m)^{\perp}$. As for the posterior covariance $\Sigmat_m$, it
is the projection of the prior covariance $\Sigmat_0$ onto the complementary space $\range(\Pmat_m)^{\perp}$.

\begin{remark}
  \tref{T:KrylovProj} implies that $\Pmat_m\xvec_m=\Pmat_m\xvec_*$ and $\Pmat_m\Sigmat_m\Pmat_m^T = \zerovec$. 
  As a consequence, if $\Xrv\sim\N(\xvec_m,\Sigmat_m)$, then the distribution of $\Pmat_m(\Xrv - \xvec_*)$ is
  Gaussian  with mean $\Pmat_m\xvec_m - \Pmat_m\xvec_*=\zerovec$ and covariance 
  $\Pmat_m\Sigmat_m\Pmat_m^T = \zerovec$.
 Thus,  within $\range(\Pmat_m)$,
 there is no uncertainty in our knowledge of $\xvec_*$ 
We can interpret the posterior as a conjecture about the unknown location of $\xvec_*$ in the complementary subspace $\range(\Pmat_m)^{\perp}$.
\end{remark}

\tref{T:KrylovProj} implies the following optimality 
for the posterior mean: It is the vector closest to the solution $\xvec_*$ in the 
affine space $\xvec_0 + K_m$, with $K_m$ as in Theorem~\ref{T:XmSigmTheory}.

\begin{theorem}[{Extension of \cite[Proposition 4]{Bartels}}]
  \label{T:KrylovOpt}
Under all the assumptions of \tref{T:KrylovProj}, the posterior mean satisfies
  \begin{align}
    \label{Eq:KrylovOptGen}
    \xvec_m = \argmin_{\xvec \in \xvec_0+K_m} (\xvec_*-\xvec)^T\Sigmat_0^\dagger(\xvec_*-\xvec).
  \end{align}
  Additionally, $(\xvec_*-\xvec_m)^T\Sigmat_0^\dagger(\xvec_*-\xvec_m) = 0$ if and only if $\xvec_m = \xvec_*$.
 % where $K_m \equiv\range(\Sigmat_0\Amat\Smat_m)$.
\end{theorem}

\begin{proof}
See \appref{S:BayesCGProofs}.
\end{proof}

Theorems~\ref{T:XmSigmTheory}, \ref{T:KrylovProj}, and~\ref{T:KrylovOpt} assume that the search directions are chosen so that $\Lammat_m$ is non-singular. The additional assumption $\xvec_*-\xvec_0\in\range(\Sigmat_0)$ in Theorems~\ref{T:KrylovProj} and \ref{T:KrylovOpt} guarantees this nonsingularity for the specific search directions computed by BayesCG, as will be shown in \tref{T:sRecursion}.

\subsection{Recursive computation of BayesCG posteriors under singular priors}
\label{S:Algorithm}
We extend the recursions for posterior distributions
under nonsingular prior covariances in \cite{Cockayne:BCG} to singular ones,
and present three results for the efficient implementation of BayesCG:  
New recursions for the posterior covariances (Theorem~\ref{T:xRecursion})
and the search directions (Theorem \ref{T:sRecursion2}); and a proof that the search directions are well-defined (\tref{T:sRecursion}).

The residuals of the posterior means are defined as
\begin{equation}
  \label{Eq:Res}
  \rvec_m \equiv \bvec-\Amat\xvec_m, \qquad 0\leq m.
\end{equation}

\begin{theorem}[Extension of Proposition 6 in \cite{Cockayne:BCG}]
  \label{T:xRecursion}
Under the assumptions of \tref{T:XmSigmTheory} if, in addition, the search directions 
$\Smat_m$ are $\Amat\Sigmat_0\Amat$-orthogonal, then 
the posterior means and covariances admit the recursions
\begin{equation}
  \label{Eq:xRecursion}
\xvec_j = \xvec_{j-1} + \frac{\Sigmat_0 \Amat\svec_j \left(\svec_j^T\rvec_{j-1} \right)}{\svec_j^T\Amat\Sigmat_0\Amat\svec_j}, \qquad 1\leq j \leq m,
\end{equation}
and
\begin{equation}
  \label{Eq:SigRecursion}
  \Sigmat_j = \Sigmat_{j-1} - \frac{\Sigmat_0\Amat\svec_j\left(\Sigmat_0\Amat\svec_j\right)^T}{\svec_j^T\Amat\Sigmat_0\Amat\svec_j},\qquad 1\leq j \leq m.
\end{equation}
\end{theorem}

\begin{proof}
  See \appref{S:BayesCGProofs}.
\end{proof}

The denominators $(\Lammat_m)_{jj}=\svec_j^T\Amat\Sigmat_0\Amat\svec_j$
in \eref{Eq:xRecursion} and \eref{Eq:SigRecursion} are
non-zero because \tref{T:XmSigmTheory} assumes that 
$\Lammat_m$ is non-singular.

Next is a Lanczos-like recurrence for 
the  $\Amat\Sigmat_0\Amat$-orthogonal search directions 
from \cite[Proposition 7]{Cockayne:BCG}.

\begin{theorem}[{\cite[Proposition 7]{Cockayne:BCG} and \cite[Proof of Proposition 7, Proposition S4, and Section S2]{BCG:Supp}}]
  \label{T:sRecursion2}
If the search directions  
  \begin{equation}
    \label{Eq:sRecursion}
    \svec_1 = \rvec_0 \neq \zerovec, \qquad 
    \svec_{j} = \rvec_{j-1}-  \frac{\rvec_{j-1}^T\rvec_{j-1}}{\rvec_{j-2}^T\rvec_{j-2}}\svec_{j-1}, \qquad 2\leq j \leq m,
  \end{equation}
  satisfy the assumptions of \tref{T:XmSigmTheory}, then they are an $\Amat\Sigmat_0\Amat$-orthogonal basis for the Krylov space
  \begin{equation}
    \label{eq_kp}
    \Kcal_m (\Amat\Sigmat_0\Amat,\rvec_0) \equiv \spn\{\rvec_0,\Amat\Sigmat_0\Amat\rvec_0, \ldots,(\Amat\Sigmat_0\Amat)^{m-1}\rvec_0\},
  \end{equation}
while the residuals $\rvec_0, \dots, \rvec_{m-1}$ are an orthogonal basis for
$\Kcal_m (\Amat\Sigmat_0\Amat,\rvec_0)$.
\end{theorem}

The maximal number of search directions in \eref{Eq:sRecursion} can be less than $n$, 
because they are a basis for the Krylov subspace $\Kcal_m(\Amat\Sigmat_0\Amat,\rvec_0)$ whose maximal dimension can
be less than $n$. 

\begin{definition}[Section 2 in \cite{Berljafa}, Definition 4.2.1 in \cite{Liesen}]
  \label{D:MaxKrylov}
  Let $\Bmat\in\Rnn$ be symmetric positive semi-definite and let $\wvec\in\Rn$
  be a non-zero vector. The \emph{grade of~$\wvec$ with respect to $\Bmat$}, or the \emph{invariance index for $(\Bmat, \wvec)$} is the maximal
  dimension $1\leq \kry \leq n$ of the Krylov space,
  \begin{equation*}
    \Kcal_\kry(\Bmat,\wvec) = \Kcal_{\kry+i}(\Bmat,\wvec), \qquad i \geq 1.
  \end{equation*}
\end{definition}

\begin{remark}
  \label{R:MaxKrylovBasis}
In \tref{T:sRecursion2}, if $\kry$ is the grade of $\rvec_0$ with respect to $\Amat\Sigmat_0\Amat$, then $\svec_{\kry+1} = \zerovec$, $\rvec_{\kry} = \zerovec$, 
while $\svec_j \neq \zerovec$ and $\rvec_{j-1}\neq \zerovec$ for $1\leq j \leq \kry$. Additionally, $\kry\leq\rank(\Sigmat_0)$.
\end{remark}

In the following theorem, we show that with the additional assumption that $\xvec_*-\xvec_0\in\range(\Sigmat_0)$, the $\Amat\Sigmat_0\Amat$-orthogonal 
search directions from \tref{T:sRecursion2} satisfy the assumptions of \tref{T:XmSigmTheory}.
 
\begin{theorem}
  \label{T:sRecursion}
Let  $\N(\xvec_0,\Sigmat_0)$ be a prior with symmetric positive semi-definite $\Sigmat_0\in\Rnn$, $\kry$ the grade of $\rvec_0$ with respect to $\Amat\Sigmat_0\Amat$,
and $m\leq K$.
If $\xvec_*-\xvec_0\in\range(\Sigmat_0)$, then the search directions from Theorem~\ref{T:XmSigmTheory} 
produce a nonsingular
$\Lammat_m$, and $\Smat_m$ is
$\Amat\Sigmat_0\Amat$-orthogonal.
\end{theorem}

\begin{proof}
Recursive computation of the BayesCG posteriors requires the search
directions
$\Smat_m=\begin{bmatrix} \svec_1 & \cdots & \svec_m\end{bmatrix}$ to be 
$\Amat\Sigmat_0\Amat$-orthogonal, so that  $\Lammat_m=\Smat_m^T\Amat\Sigmat_0\Amat\Smat_m$ is diagonal \cite[Section 2.3]{Cockayne:BCG}. Furthermore, if $\svec_j\not\in\ker(\Sigmat_0\Amat)$,
$1\leq j\leq m$,
then $\Lammat_m$ has non-zero
diagonal elements and is nonsingular.

In the following induction proof we show that the search directions are $\Amat\Sigmat_0\Amat$-orthogonal
and that $\svec_i\not\in\ker(\Sigmat_0\Amat)$ and $\svec_i \neq \zerovec$, $1\leq i \leq m$.  
Since $\Amat$ and $\Sigmat_0$ are symmetric, $\ker(\Sigmat_0\Amat)=\ker(\Sigmat_0^T\Amat^T)=\ker\left((\Amat\Sigmat_0)^T\right)$ is the orthogonal complement of $\range(\Amat\Sigmat_0)$ in $\Rn$. Therefore, we can show $\svec_i\not\in\ker(\Sigmat_0\Amat)$ by showing $\svec_i\in\range(\Amat\Sigmat_0)$ and $\svec_i\neq \zerovec$,
$1\leq i \leq m$.

By assumption  $m\leq \kry$, so \rref{R:MaxKrylovBasis} implies $\rvec_i\neq \zerovec$, $1\leq i \leq m-1$.

\paragraph{Induction basis}
The assumption $\xvec_*-\xvec_0\in\range(\Sigmat_0)$ implies
\begin{equation*}\label{Eq:ResKer}
  \rvec_0=\bvec-\Amat\xvec_0=\Amat(\xvec_*-\xvec_0)\in\range(\Amat\Sigmat_0).
\end{equation*}
Thus $\svec_1=\rvec_0\in\range(\Amat\Sigmat_0)$, and 
$\rvec_0\neq \vzero$ by assumption. Thus $\svec_1\neq \vzero$, $\svec_1\not\in\ker(\Sigmat_0\Amat)$, and $\Lammat_1 = \svec_1^T\Amat\Sigmat_0\Amat\svec_1\neq 0$.

\paragraph{Induction hypothesis}
Assume that $\svec_i, \rvec_i\in\range(\Amat\Sigmat_0)$, $\svec_i,\rvec_i \neq\zerovec$, 
and $\Lammat_i$ is nonsingular, $1\leq i \leq m-1$. This, along with \tref{T:sRecursion2} implies that $\svec_1,\ldots,\svec_{m-1}$ are $\Amat\Sigmat_0\Amat$-orthogonal so that $\Lammat_{m-1}$ is a diagonal matrix.

\paragraph{Induction step}
Applying the induction hypothesis $\svec_{m-1}, \rvec_{m-1}\in\range(\Amat\Sigmat_0)$
to \eref{Eq:sRecursion} gives
\begin{align}
  \label{Eq:LastSearch}
  \svec_m = \rvec_{m-1}-  \frac{\rvec_{m-1}^T\rvec_{m-1}}{\rvec_{m-2}^T\rvec_{m-2}}\svec_{m-1}.
\end{align}
Hence $\svec_m\in\range(\Amat\Sigmat_0)$. 
Multiply \eref{Eq:LastSearch} on the left by $\rvec_{m-1}^T$ and insert $\svec_{m-1}^T\rvec_{m-1}=0$ from \lref{L:SOrth} into the last summand
to get $\rvec_{m-1}^T\svec_m=\rvec_{m-1}^T\rvec_{m-1}$,
where $\rvec_{m-1}\neq 0$ implies $\svec_m\neq 0$. Then $\svec_m\in\range(\Amat\Sigmat_0)$ and $\svec_m\neq \zerovec$ imply $\svec_m\not\in\ker(\Sigmat_0\Amat)$.

The induction hypothesis, \tref{T:sRecursion2}, and \eref{Eq:LastSearch} imply that the search directions $\svec_1,\ldots,\svec_m$ are non-zero and $\Amat\Sigmat_0\Amat$-orthogonal.
Thus $\Lammat_m$ is nonsingular diagonal, which implies that $\svec_i\not\in\ker(\Sigmat_0\Amat)$, $1\leq i \leq m$; and with Lemma~\ref{L:ErrorRange} that
$\xvec_*-\xvec_m\in\range(\Sigmat_0)$, thus
$\rvec_m=\Amat(\xvec_*-\xvec_m)\in\range(\Amat\Sigmat_0)$.
\end{proof}

\begin{remark}
  The assumption $\xvec_*-\xvec_0\in\range(\Sigmat_0)$ in \tref{T:sRecursion}, which holds automatically if the prior covariance $\Sigmat_0$ is nonsingular, is required to guarantee the nonsingularity of the diagonal matrices $\Lammat_m$.
  
  The statistical interpretation of the assumption $\xvec_*-\xvec_0\in\range(\Sigmat_0)$ is that the solution $\xvec_*$ must live in the support of the prior, that is, in the subspace of $\Rn$ where the probability density function of $\N(\xvec_0,\Sigmat_0)$ is nonzero.
\end{remark}

Theorems \ref{T:xRecursion}, \ref{T:sRecursion2}, and \ref{T:sRecursion} form the basis for the BayesCG \aref{A:BayesCG}, which 
 differs from the original BayesCG \cite[Algorithm 1]
{Cockayne:BCG} only in the computation of the posterior covariances as a sequence of rank-1 
downdates rather than just a single rank-$m$ downdate at the end. 
\aref{A:BayesCG} is a Krylov space method; for nonsingular
 priors $\Sigmat_0$ this was established in \cite[Section 3]{Cockayne:BCG}, 
 while for singular priors this follows from~(\ref{eq_kp}) and  \tref{T:KrylovOpt}. 
 To show the similarity of BayesCG \aref{A:BayesCG} to CG, we present
 the most common implementation of CG in Algorithm~\ref{A:CG}; it is the original version due to Hestenes and Stiefel \cite[Section 3]{Hestenes}.
 
The posterior means in \aref{A:BayesCG} are closely related to the CG 
iterates in \aref{A:CG}.
In the special case
$\Sigmat_0 = \Amat^{-1}$, the BayesCG posterior means are identical to the CG iterates 
 \cite[Section 2.3]{Cockayne:BCG}. The relationship between CG and BayesCG 
 is discussed further in \cite{Calvetti:BCG,Cockayne:BCG,BCG:Rejoinder,BCG:Supp,LiFang:BCG}, and the  results are summarized in the supplement.

\begin{algorithm}
\caption{Bayesian Conjugate Gradient Method (BayesCG)}
\label{A:BayesCG}
\begin{algorithmic}[1]
\State \textbf{Input:} spd $\Amat\in\Rnn$, $\bvec\in\Rn$, $\xvec_0\in\Rn$ 
\State $\qquad\quad$ spds $\Sigmat_0\in\Rnn$ so that $\xvec_*-\xvec_0\in\range(\Sigmat_0)$
\State{$  \rvec_0 =  \bvec- \Amat\xvec_0$} \Comment{define initial values}
\State{$ \svec_1 =  \rvec_0$}
\State{$m = 0$}
\While{not converged} \Comment{iterate through BayesCG Recursions}
\State{$m = m+1$}
\State{$\alpha_m = \left( \rvec_{m-1}^T  \rvec_{m-1}\right)\big/\left( \svec_m^T {\Amat\Sigmat}_{0}  \Amat\svec_m\right)$}
\State{$ \xvec_m =  \xvec_{m-1} + \alpha_m  {\Sigmat_0 \Amat}  \svec_m $}
\State{$\Sigmat_m = \Sigmat_{m-1} - \Sigmat_0\Amat\svec_m\left(\Sigmat_0\Amat\svec_m\right)^T\big/(\svec_m^T\Amat\Sigmat_0\Amat\svec_m)$}
\State{$ \rvec_m =  \rvec_{m-1} - \alpha_m \Amat\Sigmat_0\Amat\svec_m$}
\State{$\beta_m = \left( \rvec_m^T \rvec_i\right)\big/\left( \rvec_{m-1}^T\rvec_{m-1}\right)$}
\State{$ \svec_{m+1} =  \rvec_m+\beta_m  \svec_m$}
%\State{Reorthogonalize $\svec_{m+1}$} \Comment{optional, see \cite[Section 6.1]{Cockayne:BCG}}
\EndWhile
\State \textbf{Output:} {$ \xvec_m$, $\Sigmat_m$}
\end{algorithmic}
\end{algorithm}

\begin{algorithm}
\caption{Conjugate Gradient Method (CG)}
\label{A:CG}
\begin{algorithmic}[1]
  \State{\textbf{Input:} spd $\Amat\in\Rnn$, $\bvec\in\Rn$, $\xvec_0\in\Rn$}
  \State{$  {\rvec}_0 =  {\bvec}- {\Amat\xvec}_0$}   \Comment{define initial values}
  \State{$\vvec_1 = \rvec_0$}
\State{$m=0$}
\While{not converged}\Comment{iterate through CG Recursions}
\State{$m=m+1$}
\State{$\gamma_m = (\rvec_{m-1}^T\rvec_{m-1})\big/ (\vvec_m^T\Amat\vvec_m)$}
\State{$ \xvec_m =  \xvec_{m-1} +   \gamma_m\vvec_m $}
\State{$\rvec_m = \rvec_{m-1} - \gamma_m\Amat\vvec_m$}
%\State{Reorthogonalize $\rvec_m$} \Comment{Reorthogonalization is optional, discussed in \sref{S:Experiments}}
\State{$\delta_m = (\rvec_m^T\rvec_m)\big/ (\rvec_{m-1}^T\rvec_{m-1})$}
\State{$ \vvec_{m+1} =  \rvec_{m}+  \delta_m\vvec_m$}
\EndWhile
\State \textbf{Output:} {$\xvec_m$}
\end{algorithmic}
\end{algorithm}

\subsection{Choice of BayesCG prior distribution}
\label{S:PriorChoice}
The mean $\xvec_0$ in the prior  $\N(\xvec_0,\Sigmat_0)$ corresponds 
 to the initial guess in CG, while the covariance $\Sigmat_0$ can be any symmetric positive 
 semi-definite matrix that satisfies $\xvec_*-\xvec_0 \in\range(\Sigmat_0)$. Nonsingular priors examined in \cite[Section 4.1]{Cockayne:BCG} include 
\begin{itemize}
\item Inverse prior $\Sigmat_0 = \Amat^{-1}$:
 The posterior means in \aref{A:BayesCG} are equal to the CG iterates.
\item Natural prior $\Sigmat_0 = \Amat^{-2}$:
The posterior means in \aref{A:BayesCG} converge in a single iteration.
\item Identity prior $\Sigmat_0 = \Imat$:
The prior is easy to compute, but the posterior means in \aref{A:BayesCG} converge slowly.
\item Preconditioner prior $\Sigmat_0 = \left(\Mmat^T\Mmat\right)^{-1}$ where $\Mmat \approx \Amat$:
This prior approximates the natural prior.
\item Krylov subspace prior $\Sigmat_0$:
This prior is  defined in terms of a basis for the Krylov space $\Kcal(\Amat,\rvec_0)$.
\end{itemize}

\figref{F:InvId} illustrates the convergence of posterior means and 
covariances from Algorithm~\ref{A:BayesCG} under the priors
$\Sigmat_0 = \Amat^{-1}$ and $\Sigmat_0 = \Imat$. In both cases the posterior means converge faster than the posterior covariances, suggesting that the covariances are unreasonably pessimistic about the size of the error $\xvec_*-\xvec_m$. Section~\ref{S:Phi} presents
a detailed discussion of the relation between 
the trace of the posterior covariance and 
the error $\xvec_*-\xvec_m$ in the posterior means. 

\begin{figure}
  \centering
\includegraphics[scale = .4]{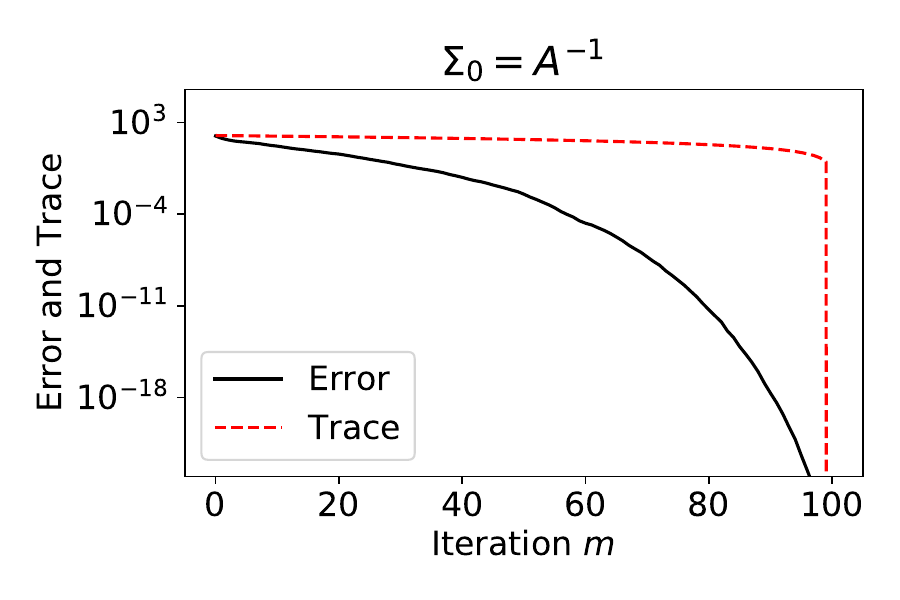}
 \includegraphics[scale = .4]{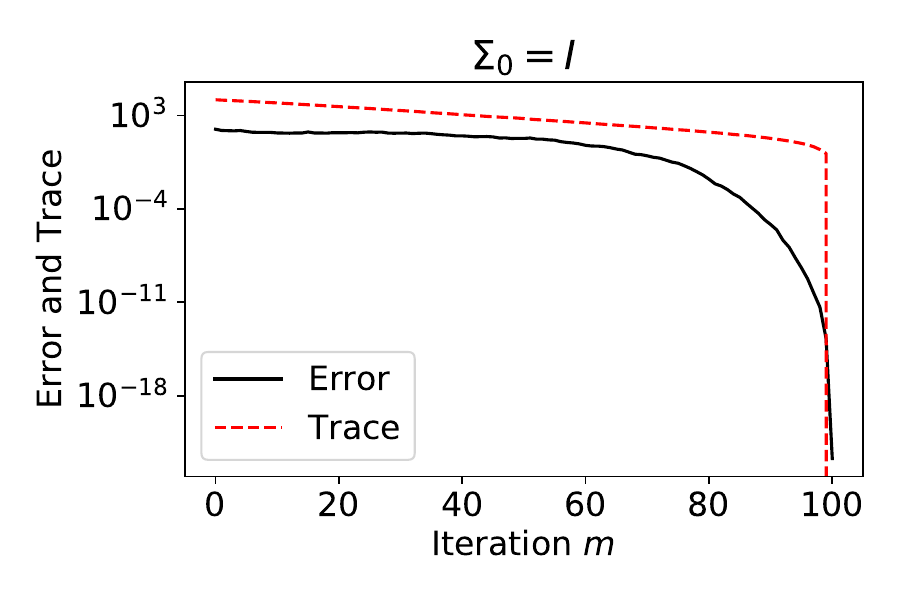} \\
  \label{F:InvId}
  \caption{Convergence of BayesCG Algorithm~\ref{A:BayesCG}
  applied to the linear 
  system 
  in \sref{S:ExpSmall} under different priors:  inverse prior (left panel) and identity prior (right 
  panel).  Convergence of the means is displayed as $\|\xvec_*-\xvec_m\|_\Amat^2$, while convergence of the covariances is displayed as $\trace(\Amat\Sigmat_m)$.}
\end{figure}

The example below presents a prior of minimal rank that comprises a maximal 
amount of information.

\begin{example}
  \label{Ex:Error}
If $\xvec_0\neq  \xvec_*$, 
then  $\Sigmat_0 = (\xvec_*-\xvec_0)(\xvec_*-\xvec_0)^T$ is 
is a rank-one covariance that satisfies $\xvec_*-\xvec_0\in\range(\Sigmat_0)$.
All rank-one prior covariances for BayesCG are multiples of this prior.

To see this, note that \tref{T:sRecursion} and $\Amat^{-1}\rvec_0 = \xvec_*-\xvec_0$ 
imply termination of Algorithm~\ref{A:BayesCG} under this prior  in a single iteration,
 \begin{align*}
  \xvec_1 &= \xvec_0 + \frac{1}{\rvec_0^T\Amat\underbrace{\Amat^{-1}\rvec_0\rvec_0^T\Amat^{-1}}_{\Sigmat_0}\Amat\rvec_0}\underbrace{\Amat^{-1}\rvec_0\rvec_0^T\Amat^{-1}}_{\Sigmat_0}\Amat\rvec_0(\rvec_0^T\rvec_0) = \xvec_0 + \xvec_*-\xvec_0 = \xvec_*.
\end{align*}
\end{example}

\section{Prior distributions informed by Krylov subspaces}
\label{S:KrylovPrior}
Motivated by the `Krylov subspace prior'  \cite[section 4.1]{Cockayne:BCG},
we introduce a new `Krylov prior'  (\sref{S:KrylovPriorDef}), derive expressions
for the Krylov posteriors (\sref{S:KrylovPost}), 
ensure the Krylov posteriors accurately model uncertainty in $\xvec_*$ (\sref{S:Phi}), and develop a practical Krylov posterior and 
an efficient implementation of BayesCG as a uncertainty-aware version of CG (\sref{S:LowRank}).

\subsection{General Krylov prior}
\label{S:KrylovPriorDef}
We introduce our new Krylov prior (\dref{D:KrylovPrior}) and show that the 
BayesCG Krylov subspace under the Krylov prior is identical to the CG Krylov subspace (\lref{L:KrylovSpacesEqual}). This Krylov prior
is impractical because its computation amounts to the direct solution of~(\ref{Eq:Axb}), however it is the foundation for the efficient low-rank approximations in section~\ref{S:LowRank}.

The new Krylov prior is defined in terms of the maximal CG Krylov subspace $\Kcal_\kry(\Amat,\rvec_0)$, where $\kry$ is the grade of $\rvec_0$ with respect to $\Amat$ (\dref{D:MaxKrylov}). The 
 $\Amat$-orthonormal versions of the search directions $\vvec_m$ in \aref{A:CG} are
\begin{equation}\label{e_vtilde}
\tilde{\vvec}_m \equiv \vvec_m/\sqrt{\vvec_m^T\Amat\vvec_m}, \qquad 1\leq m\leq \kry.
\end{equation}
As columns  of
\begin{equation}
  \label{Eq:VDef}
\Vmat\equiv \begin{bmatrix} \tilde{\vvec}_1 & \cdots & \tilde{\vvec}_{\kry}\end{bmatrix}\in\R^{n\times \kry}\qquad \text{with}
\quad \Vmat^T\Amat\Vmat=\Imat_{\kry}
\end{equation}
they represent an $\Amat$-orthonormal basis for $\range(\Vmat)=\Kcal_\kry(\Amat,\rvec_0)$ \cite[Theorem 5.1]{Hestenes}.

\begin{definition}
  \label{D:KrylovPrior}
  The (general) Krylov prior is $\N(\xvec_0,\Gammat_0)$, where the 
mean $\xvec_0$ is an initial guess for $\xvec_*$, and the covariance matrix is
\begin{equation}
\label{Eq:KrylovPrior}
\Gammat_0 \equiv \Vmat \Phimat \Vmat^T \in\Rnn
\end{equation}
where $\Vmat$ is as defined in \eref{Eq:VDef} and $\Phimat \equiv \diag\begin{pmatrix} \phi_1 &\phi_2 & \cdots & \phi_\kry\end{pmatrix}\in\R^{\kry\times \kry}$ with $\phi_i >0$, $1\leq i \leq \kry$.
The Krylov prior is `general' because the diagonal elements of $\Phimat$ are unspecified. 
\end{definition}

 The results in this section and in 
\sref{S:KrylovPost} are valid for any choice of positive diagonal elements 
in~$\Phimat$. A specific choice of diagonal elements  is presented in \sref{S:Phi}.

The Krylov prior covariance
has $\rank(\Gammat_0)=\kry$ and is singular 
for $\kry<n$, hence the need for singular priors in section~\ref{S:BayesCG}.
Fortunately, $\Gammat_0$ is a well-defined BayesCG prior, because it 
satisfies the crucial condition in Theorem~\ref{T:sRecursion}, 
\begin{equation*}
\xvec_*-\xvec_0\in\mathcal{K}_\kry(\Amat,\rvec_0) =\range(\Vmat) =  \range(\Gammat_0).
\end{equation*}

\paragraph{Intuition}
We give two different interpretations of the decomposition~(\ref{Eq:KrylovPrior}).
\begin{enumerate}
%\item Generalized $(\Amat, \Amat)$-singular value decomposition\footnote{The generalized singular value decomposition \cite[Theorem 3]{VanLoan} has singular vectors that are orthonormal in a general definite inner product, see Definition~\ref{D:OrthProj}.} in the sense of \cite[Theorem 3]{VanLoan}, where $\Vmat$ represents the $\Amat$-orthonormal left and right singular vector matrices, and the $\phi_i$ are the stationary values of $\|\Gammat_0\zvec\|_{\Amat}/\|\zvec\|_{\Amat}$.
\item Hermitian eigenvalue problem $\Amat^{1/2}  \Gammat_0\Amat^{1/2} = \Wmat\Phimat\Wmat^T$, where
$\Phimat$ contains the positive eigenvalues, and  the eigenvector
matrix $\Wmat\equiv\Amat^{1/2}\Vmat$ has orthonormal columns with $\Wmat^T\Wmat=\Imat_{\kry}$.
\item Non-Hermitian eigenvalue problem $\Gammat_0\Amat\Vmat=\Vmat \Phimat$ with
eigenvalues and eigenvectors
\begin{equation} \label{Eq:KrylovEig}
\Gammat_0\Amat\tilde{\vvec}_m=\phi_m\tilde{\vvec}_m, \qquad 1 \leq m\leq \kry.
\end{equation}
This is the property to be exploited in \sref{S:KrylovPost}. 
\end{enumerate}

We show that the BayesCG Krylov subspace under the Krylov prior is 
identical to the CG Krylov subspace. 

\begin{lemma}
  \label{L:KrylovSpacesEqual}
If $\Gammat_0$ is the Krylov prior in Definition~\ref{D:KrylovPrior}, then
  \begin{equation*}
    \Kcal_m(\Amat,\rvec_0) = \Kcal_m(\Amat\Gammat_0\Amat,\rvec_0), \qquad
    1\leq m \leq \kry.
  \end{equation*}
Consequently, $\kry$ is also the grade of $\rvec_0$ with respect to $\Amat\Gammat_0\Amat$ is $\kry$.
\end{lemma}

\begin{proof}
An induction proof shows that the Krylov subspaces are the same 
  for the first $\kry$ dimensions. Then we prove that the grade of 
  $\rvec_0$ with respect to $\Amat\Sigmat\Amat$ is $\kry$. 
  \paragraph{Induction basis} Since one-dimensional Krylov subspaces are independent of the matrix, 
  \begin{equation*}
    \Kcal_1(\Amat,\rvec_0) = \spn\{\rvec_0\} = \Kcal_1(\Amat\Gammat_0\Amat,\rvec_0).
  \end{equation*}

  \paragraph{Induction  hypothesis}
  Assume that 
  \begin{equation*}
    \Kcal_{i}(\Amat,\rvec_0) = \Kcal_{i}(\Amat\Gammat_0\Amat,\rvec_0),
    \qquad 1\leq i\leq m-1.
  \end{equation*}
With $\Vmat_{1:m-1} = \begin{bmatrix} \tilde \vvec_1 & \tilde \vvec_2 & \cdots & \tilde \vvec_{m-1} \end{bmatrix}$ in (\ref{Eq:VDef}) this implies
  \begin{equation}
        \label{Eq:L:KrylovEqual}
    \range(\Vmat_{1:m-1}) = \Kcal_{m-1}(\Amat,\rvec_0) = \Kcal_{m-1}(\Amat\Gammat_0\Amat,\rvec_0).
  \end{equation}
 
  \paragraph{Induction step}
From \eref{Eq:L:KrylovEqual} follow the expressions for the direct sums,
  \begin{eqnarray}
  \Kcal_m(\Amat,\rvec_0) &=& \spn\{\rvec_0\} \oplus \range(\Amat\Vmat_{1:m-1})   \label{Eq:KrylovInv}\\
     \Kcal_{m}(\Amat\Gammat_0\Amat,\rvec_0) &=& \spn\{\rvec_0\} \oplus \range(\Amat\Gammat_0\Amat\Vmat_{1:m-1}). \label{Eq:KrylovKrylov}
  \end{eqnarray}
Then \eref{Eq:KrylovEig} and the non-singularity of  $\Phimat$ imply
  \begin{equation*}
    \range(\Amat\Gammat_0\Amat\Vmat_{1:m-1}) = \range(\Amat\Vmat_{1:m-1}\Phimat_{1:m-1}) = \range(\Amat\Vmat_{1:m-1}).
  \end{equation*}
Combining this with \eref{Eq:KrylovInv} and \eref{Eq:KrylovKrylov}
completes the induction,
  \begin{align*}
\Kcal_m(\Amat,\rvec_0) &= \spn\{\rvec_0\} \oplus \range(\Amat\Vmat_{1:m-1}) \\
&= \spn\{\rvec_0\} \oplus \range(\Amat\Gammat_0\Amat\Vmat_{1:m-1}) 
 = \Kcal_{m}(\Amat\Gammat_0\Amat,\rvec_0).
  \end{align*}
  \paragraph{Maximal Krylov space dimension}
If $\kry'$ is the grade of $\rvec_0$ with respect to $\Amat\Gammat_0\Amat$,
then the induction implies
  \begin{equation*}
    \kry' \geq \dim(\Kcal_\kry(\Amat\Sigmat_0\Amat, \rvec_0)) = \dim(\Kcal_\kry(\Amat,\rvec_0)) = \kry.
  \end{equation*}
On the other hand, $\rank(\Amat\Gammat_0\Amat) = \kry$ implies
$\kry' \leq \kry$. Therefore $\kry' = \kry$.
\end{proof}
 
\subsection{General Krylov posteriors}\label{S:KrylovPost}
We show (Theorem~\ref{T:KrylovPosterior}) that under the Krylov prior, 
the BayesCG posteriors have means that 
are identical to the CG iterates, and covariances 
 that can be factored as in  Definition~\ref{D:KrylovPrior}. This represents the
 foundation 
 for an efficient implementation of BayesCG (Remark~\ref{r_31}). 

Define appropriate submatrices of  $\Vmat$ and $\Phimat$,
\begin{equation}
  \label{Eq:FactorSubmatrices}
\Vmat_{i:j} \equiv \begin{bmatrix} \tilde{\vvec}_{i}& \cdots & \tilde{\vvec}_{j}\end{bmatrix},\qquad
\Phimat_{i:j} \equiv \diag\begin{pmatrix} \phi_{i}& \cdots&\phi_j\end{pmatrix}, \qquad 1\leq i < j\leq\kry.
\end{equation}
In particular, $\Vmat=\Vmat_{1:K}$ and $\Phimat=\Phimat_{1:K}$.

\begin{theorem}
  \label{T:KrylovPosterior}
  Let $\N(\xvec_0,\Gammat_0)$ be the Krylov prior in Definition~\ref{D:KrylovPrior},
  and let \\
  $\N(\xvec_m,\Gammat_m)$ be the 
posteriors from BayesCG \aref{A:BayesCG}, $1\leq m \leq \kry$. Then the posterior means $\xvec_m$ are 
identical to the corresponding CG iterates in  \aref{A:CG}, and the posterior 
covariances can be factored as
  \begin{equation}
    \label{Eq:GammaN}
 \Gammat_{m} =  \Vmat_{m+1:\kry}\Phimat_{m+1:\kry}(\Vmat_{m+1:\kry})^T, \qquad 1\leq m <\kry,
  \end{equation}
and $\Gammat_m = \zerovec$ for $m = \kry$.
\end{theorem}

\begin{proof}
We first derive the equality of the posterior means, and then the factorizations
of the covariances.

\paragraph{Posterior means}
The idea is to show equality of the BayesCG posterior means 
  under Krylov and inverse priors since, per the discussion in \cite[Section 2.3]{Cockayne:BCG} and section~\ref{S:PriorChoice}, BayesCG 
  posterior means under the inverse
  prior are identical to CG iterates.

 From \tref{T:XmSigmTheory}, and the `equivalence' of Algorithm~\ref{A:BayesCG} 
 under $\Sigmat_0 = \Amat^{-1}$ 
 and Algorithm~\ref{A:CG} follows that the BayesCG posterior means
under the inverse prior are equal to
  \begin{equation}
    \label{Eq:InvPostMean}
    \xvec_m = \xvec_0 + \Vmat_{1:m}\Vmat_{1:m}^T\rvec_0.
  \end{equation}
  Similarly, \tref{T:XmSigmTheory} implies that the BayesCG posterior under
  the Krylov prior are equal to
  \begin{equation}
    \label{Eq:KrylovPostMean}
    \xvec_m = \xvec_0 + \Gammat_0\Amat\widetilde\Smat_m(\widetilde\Smat_m^T\Amat\Gammat_0\Amat\widetilde\Smat_m)^{-1}\widetilde\Smat_m^T\rvec_0,
  \end{equation}
  where the columns of $\widetilde\Smat_m$ are the search directions from \aref{A:BayesCG} under the Krylov prior. To show the equality of (\ref{Eq:InvPostMean}) and (\ref{Eq:KrylovPostMean}), we need to relate $\widetilde\Smat_m$ and $\Vmat_{1:m}$
  and then include the Krylov prior $\Gammat_0$.
  
With the submatrices defined as in (\ref{Eq:FactorSubmatrices})
we conclude from \eref{Eq:VDef} and \lref{L:KrylovSpacesEqual} that
  \begin{equation*}
    \range(\widetilde\Smat_m) = \Kcal_m(\Amat\Gammat_0\Amat,\rvec_0) = \range(\Vmat_{1:m}),
  \end{equation*}
 where the columns of $\widetilde\Smat_m$ are $\Amat\Gammat_0\Amat$-orthogonal. 
 To show that the columns of $\Vmat_{1:m}$ are also $\Amat\Gammat_0\Amat$-orthogonal, exploit the fact that they are $\Amat$-orthonormal and apply 
 Definition~\ref {D:KrylovPrior},
  \begin{equation*}
    \Vmat_{1:m}^T\Amat\Gammat_0\Amat\Vmat_{1:m} = \Vmat_{1:m}^T\Amat\Vmat\Phimat\Vmat^T\Amat\Vmat_{1:m} = \Phimat_{1:m},
  \end{equation*}
  which is a diagonal matrix.
  We have established that the columns
  of $\widetilde\Smat_m$ and $\Vmat_{1:m}$ are $\Amat\Gammat_0\Amat$-orthogonal,
with respective leading columns being multiples of $\rvec_0$, thus are 
  $\Amat\Gammat_0\Amat$-orthogonal bases of $\Kcal_m(\Amat\Gammat_0\Amat,\rvec_0)$. 
  Therefore the columns of $\Vmat_{1:m}$ are multiples of the columns of $\widetilde\Smat_m$. 
  That is 
  \begin{equation}
    \label{Eq:SmatVmat}
    \widetilde\Smat_m = \Vmat_{1:m}\Delmat
  \end{equation}
for some non-singular diagonal matrix $\Delmat\in\R^{m\times m}$.
  Substitute \eref{Eq:SmatVmat} into the third interpretation (\ref{Eq:KrylovEig}) of the
  Krylov prior,
  \begin{equation*}
  \Gammat_0\Amat\widetilde\Smat_m=\Gammat_0\Amat\Vmat_{1:m}\Delmat
  =\Vmat_{1:m}\Phimat_{1:m}\Delmat
  \end{equation*}
  and this in turn into the second summand of \eref{Eq:KrylovPostMean}. Then
the non-singularity and diagonality of both $\Delmat$ and $\Phimat$ lead to the
simplification 
    \begin{align}
    \label{Eq:KrylovPostSimplify}
    \xvec_m = \xvec_0 +  \Vmat_{1:m}\Phimat_{1:m}\Delmat(\Delmat\Phimat_{1:m}
    \Delmat)^{-1}\Delmat\Vmat_{1:m}^T\rvec_0 
            = \xvec_0+\Vmat_{1:m}\Vmat_{1:m}^T\rvec_0,
  \end{align}
  which is \eref{Eq:InvPostMean}. 
  
  \paragraph{Posterior covariances}
 Substituting \eref{Eq:SmatVmat} into  \tref{T:XmSigmTheory} and simplifying as
in~\eref{Eq:KrylovPostSimplify} gives
  \begin{align*}
    \Gammat_m &= \Gammat_0 - \Gammat_0\Amat\widetilde\Smat_m(\widetilde\Smat_m^T\Amat\Gammat\Amat\widetilde\Smat_m)^{-1}\widetilde\Smat^T\Amat\Gammat_0\\
 & = \Vmat\Phimat\Vmat^T- \Vmat_{1:m}\Phimat_{1:m}\Vmat_{1:m}^T = \Vmat_{m+1:\kry}\Phimat_{m+1:\kry}\Vmat_{m+1:\kry}^T.
  \end{align*}
\end{proof}

\begin{remark}\label{r_31}
\tref{T:KrylovPosterior} implies that the posteriors from BayesCG
under  the Krylov prior have means that can be computed with CG,
and  covariances can be maintained in factored form
without any arithmetic operations. This is the key to the 
efficient implementation of BayesCG in \sref{S:LowRank}.
\end{remark}

\subsection{Krylov posteriors that capture CG convergence}
\label{S:Phi}
We present a Krylov prior with specific diagonal elements (section~\ref{s_331}),
discuss the calibration of BayesCG under this prior
(section~\ref{R:Calibration}) and its relation 
to existing CG error estimation theory (section~\ref{R:CGError}).

\subsubsection{Specific Krylov prior}\label{s_331}
We choose a specific diagonal matrix $\Phimat$ for the Krylov prior 
(Definition~\ref{D:specKrylov}), so that the Krylov posteriors accurately model the uncertainty in our knowledge of $\xvec_*$ due to the error $\xvec_*-\xvec_m$.
We derive error estimates from samples of the posteriors (Lemma~\ref{L:SExp}) and
then relate them to CG errors (Theorem~\ref{Eq:PhiDef}).

Let us start with a general posterior distribution $\N(\xvec,\Sigmat)$.
If it indeed accurately modeled the uncertainty in $\xvec_*$ due to the approximation error $\xvec_*-\xvec$, then we would expect the difference between samples of
$\N(\xvec,\Sigmat)$ and its posterior mean~$\xvec$
to be close to the actual error,
\begin{equation}
  \label{Eq:SampleEstimate}
  \Exp\left[\|\Xrv-\xvec\|_\Amat^2\right] = \|\xvec_*-\xvec\|_\Amat^2\qquad
  \text{where}\quad \Xrv\sim\N(\xvec,\Sigmat).
\end{equation}
The squared $\Amat$-norm error $\|\Xrv-\xvec\|_\Amat^2$ 
is a quadratic form, whose expected value has an explicit expression.

\begin{lemma}
  \label{L:SExp}
 If $\Xrv\sim\N(\xvec,\Sigmat)$ is a Gaussian random variable
  with mean $\xvec\in\Rn$ and symmetric positive semi-definite covariance
  $\Sigmat\in\Rnn$, then
    \begin{equation}
    \label{Eq:SExp}
    \Exp\left[\|\Xrv-\xvec\|_\Amat^2\right] = \trace(\Amat\Sigmat).
  \end{equation}
\end{lemma}

\begin{proof}
  The proof relies on the expected value of a quadratic form in \appref{S:Aux}.
Set $\Zrv \equiv \Xrv-\xvec\sim\N(\zerovec,\Sigmat)$ and
apply \lref{L:QuadExp} to $\Zrv^T\Amat\Zrv$, 
\begin{align*}
  \Exp\left[\|\Xrv-\xvec\|_\Amat^2\right] = \Exp\left[\|\Zrv\|_{\Amat}^2\right]=
  \Exp\left[\Zrv^T\Amat\Zrv\right] = \trace(\Amat\Sigmat).
\end{align*}
\end{proof}

Thus, $\trace(\Amat\Sigmat)$ has the potential to be an error indicator.
We present a specific diagonal matrix for the Krylov prior $\Gammat_0$ in
Definition~\ref{D:KrylovPrior}, so that its posterior covariances
produce meaningful error estimates $\trace(\Amat\Gammat_m)$.

\begin{definition}
  \label{D:specKrylov}
  The (specific) Krylov prior is $\N(\xvec_0,\Gammat_0)$, where the 
mean $\xvec_0$ is an initial guess for $\xvec_*$, and the covariance matrix is
\begin{equation}
\label{Eq:specKrylov}
\Gammat_0 \equiv \Vmat \Phimat \Vmat^T \in\Rnn
\end{equation}
where $\Vmat$ is defined in \eref{Eq:VDef} and $\Phimat \equiv \diag\begin{pmatrix} \phi_1 &\phi_2 & \cdots & \phi_\kry\end{pmatrix}\in\R^{\kry\times \kry}$
has diagonal elements 
\begin{align*}
\phi_i= \gamma_i \|\rvec_{i-1}\|_2^2, \qquad 1\leq i \leq \kry,
\end{align*}
where
$\gamma_i=\rvec_{i-1}^T\rvec_{i-1}/\vvec_i^T\Amat\vvec_i$ are the step sizes in line 7 
of CG \aref{A:CG}. 
\end{definition}

Now we show that the posterior covariances from BayesCG under the specific Krylov prior reproduce the CG error.

\begin{theorem} \label{Eq:PhiDef}
 Let $\N(\xvec_0,\Gammat_0)$ be the Krylov prior in Definition~\ref{D:specKrylov},
  and 
  $\N(\xvec_m,\Gammat_m)$ be the 
posteriors from BayesCG \aref{A:BayesCG}, $1\leq m \leq \kry$. Then
\begin{equation*}
\trace(\Amat\Gammat_m) = \|\xvec_*-\xvec_m\|_{\Amat}^2, \qquad 1\leq m \leq \kry.
\end{equation*}
\end{theorem}

\begin{proof}
Apply Lemma~\ref{L:SExp} to the specific Krylov prior in Definition~\ref{D:specKrylov}.
From the cyclic commutativity of the trace and $\Amat$-orthonormality of the columns of $\Vmat$ follows
\begin{align}  
\trace(\Amat\Gammat_m) &=
 \trace(\Amat\Vmat_{m:\kry}\Phimat_{m:\kry}(\Vmat_{m:\kry})^T)\notag\\
 & = \trace((\Vmat_{m:\kry})^T\Amat\Vmat_{m:\kry}\Phimat_{m:\kry}) =  
 \trace(\Phimat_{m:\kry}).\label{Eq:PhiTrace}
\end{align}
The diagonal matrix $\Phimat$ for the specific  Krylov prior in Definition~\ref{D:specKrylov} is chosen so that
$\trace(\Phimat_{m:\kry}) = \|\xvec_*-\xvec_m\|_\Amat^2$. 
Remember 
that the reduction in the squared $\Amat$-norm error from iteration $m$ to $m+d$ of \aref{A:CG}  
equals \cite[Theorem 6:1]{Hestenes} and \cite[Theorem 5.6.1]{Liesen} 
\begin{equation}
  \label{Eq:HSError}
  \|\xvec_*-\xvec_m\|_\Amat^2 - \|\xvec_*-\xvec_{m+d}\|_\Amat^2 = \sum_{i = m+1}^{m+d} \gamma_i \|\rvec_{i-1}\|_2^2,\qquad 0\leq m < m+d \leq \kry.
\end{equation}
Setting  $d=\kry-m$ gives $\xvec_\kry = \xvec_*$ and
\begin{align*}
  \label{Eq:HSError2}
  \|\xvec_*-\xvec_m\|_\Amat^2 = \sum_{i = m+1}^{\kry} \gamma_i \|\rvec_{i-1}\|_2^2,\qquad 0\leq m \leq \kry.
\end{align*}
Combine this equality with \eref{Eq:PhiTrace} to conclude
$\phi_i = \gamma_i \|\rvec_{i-1}\|_2^2$, $1\leq i \leq \kry$.
\end{proof}

Thus, the specific Krylov posteriors have covariances that converge at the same 
speed as their means.

\subsubsection{Calibration of BayesCG under the specific Krylov prior}
\label{R:Calibration}
A probabilistic numerical linear solver is considered \textit{calibrated} if its posterior distribution accurately models the uncertainty in $\xvec_*$ due to the approximation error $\xvec_*-\xvec_m$. Calibration of general probabilistic methods 
is discussed in \cite{CGOS21} and of linear solvers in \cite{CIOR20}.
We briefly discuss how \lref{L:SExp} and \tref{Eq:PhiDef} contribute to 
better calibration of BayesCG under the specific Krylov prior.

Previous probabilistic extensions of CG do not produce posteriors that 
accurately model the uncertainty in $\xvec_*$ \cite[Section 6.4]{Bartels},
\cite[Section 6.1]{Cockayne:BCG}, \cite[Section 3]{WH20}. 
For instance, \figref{F:InvId} illustrates that BayesCG under the priors
$\Sigmat_0=\Amat^{-1}$ and $\Sigmat_0 = \Imat$  has errors
$\|\xvec_*-\xvec_m\|_\Amat^2$ 
that converge faster than $\trace(\Amat\Sigmat_m)$.
Furthermore, according to \lref{L:SExp}, the
estimators $\trace(\Amat\Sigmat_m)$ from posterior samples are inaccurate 
and do not reflect the true error
$\|\xvec_*-\xvec_m\|_\Amat^2$.
In other words, the posteriors do not accurately model uncertainty in $\xvec_*$.

Our approach towards designing posteriors that accurately model the uncertainty in~$\xvec_*$ relies a judicious choice of the diagonal matrix $\Phimat$ for the specific
Krylov prior, so that
sampling from the posteriors produces accurate error estimates. 
This can be viewed as a scaling of the posterior covariance that forces 
 $\trace(\Phimat_{m:\kry})=\|\xvec_*-\xvec_m\|_{\Amat}^2$. 
Alternative approaches for improving posteriors via scaling
 of the posterior covariances include \cite[Section 4.2]{Cockayne:BCG},
 \cite[Section 7]{Fanaskov21}, and \cite[Section 3]{WH20}

Empirical evidence demonstrating that BayesCG under the specific Krylov prior 
produces posterior samples with accurate error estimates suggests but does not guarantee that it accurately models the uncertainty in $\xvec_*$. A rigorous investigation of the calibration of BayesCG under the specific Krylov prior is the subject of a separate paper.

\subsubsection{Relation to CG error estimation}
\label{R:CGError}
The purpose of \lref{L:SExp} is to motivate a choice of $\Phimat$ so that 
BayesCG under the specific Krylov prior
accurately models the uncertainty in $\xvec_*$ due to the approximation error $\xvec_*-\xvec_m$.
  
Effective CG error estimation is  a well researched area, with most effort
focused on the absolute $\Amat$-norm error. One option \cite{StrakosTichy}
is to run $d$ additional CG  iterations and apply \eref{Eq:HSError} to obtain the underestimate \cite[Equation (4.9)]{StrakosTichy},
  \begin{equation}
    \label{Eq:StrakosTichy}
\sum_{i = m+1}^{m+d} \gamma_i \|\rvec_{i-1}\|_2^2 \leq \|\xvec_*-\xvec_m\|_\Amat^2. 
  \end{equation}
The rationale is that  the error after $m+d$ iterations has become negligible compared to the error after $m$ iterations, especially in the case of fast convergence. The number of additional iterations $d$ is usually called the `delay' \cite[Section 1]{MeurantTichy19}, and larger values of $d$ lead to more accurate error estimates.
  
The estimate \eref{Eq:StrakosTichy} also coincides with the lower bound from Gaussian quadrature \cite[Section 3]{StrakosTichy}. Other lower and upper bounds for the $\Amat$-norm 
error based on quadrature formulas and tunable with a delay include
\cite{GolubMeurant93,GolubMeurant97,GolubStrakos,Meurant:Bounds,MeurantTichy13,MeurantTichy19,StrakosTichy,StrakosTichy:Preconditioner}. 

\subsection{Practical specific Krylov posteriors}
\label{S:LowRank}
We  define low rank approximations of specific Krylov posterior covariances (\dref{d_lrK}), 
and present an efficient CG-based implementation of BayesCG (\aref{A:BayesCGWithoutBayesCG}).
It approximates the 
Krylov posteriors from delay iterations, thereby avoiding explicit computation of the 
Krylov prior, and inherits the fast convergence of CG.

The following low-rank approximations are based on the factored form of the Krylov posteriors in Theorem~\ref{T:KrylovPosterior} and make use of the submatrices 
defined in (\ref{Eq:FactorSubmatrices}).

\begin{definition}\label{d_lrK}
Let $\N(\xvec_0,\Gammat_0)$ be the specific Krylov prior from 
Definition~\ref{D:specKrylov} with posteriors
 \begin{equation*}
  \Gammat_m = \Vmat_{m+1:\kry}\Phimat_{m+1:\kry}\left(\Vmat_{m+1:\kry}\right)^T, 
  \qquad 1\leq m < \kry.
\end{equation*} 
For  $1\leq d \leq \kry-m$, extract the leading rank-$d$  submatrices from $\Vmat_{m+1:\kry}$ and 
$\Phimat_{m+1:\kry}$,
and define the rank-$d$ approximate Krylov posteriors as
$\N(\xvec_m,\widehat\Gammat_m)$ with
  \begin{equation}\label{e_dpost}
  \widehat{\Gammat}_m \equiv \Vmat_{m+1:m+d} \Phimat_{m+1:m+d} (\Vmat_{m+1:m+d})^T.
  \end{equation}
\end{definition}

\begin{remark}
  \label{R:ApproxAlt}
  We view \eref{e_dpost} as approximations of the posteriors resulting from the full-rank prior. Instead,  we could also view \eref{e_dpost} as posteriors from rank-$(m+d)$ approximations of the prior $\N(\xvec_0,\widehat\Gammat_0)$ with $\widehat\Gammat_0 = \Vmat_{1:m+d}\Phimat_{1:m+d}(\Vmat_{1:m+d})^T$. 
  This interpretation of \eref{e_dpost} is discussed in the supplement.
 However, from a practical point of view, explicit computation of $\widehat\Gammat_0$ is too expensive and it is not necessary.
\end{remark}

Following the same argument as \tref{Eq:PhiDef}, one can express the
underestimate~\eref{Eq:StrakosTichy} for the CG error in terms of the
posterior covariance,
\begin{equation*}
  \trace(\Amat\widehat\Gammat_m) = \sum_{i = m+1}^{m+d} \gamma_i \|\rvec_{i-1}\|_2^2 \leq \|\xvec_*-\xvec_m\|_\Amat^2.
\end{equation*}
If the posterior distribution accurately models the uncertainty in the solution, then we expect \eref{Eq:SampleEstimate} to hold. This means the
accuracy of the uncertainty from the approximate Krylov posterior is related to the accuracy of the underestimate \eref{Eq:StrakosTichy}. 

\begin{algorithm}
\caption{BayesCG under rank-$d$ approximations of specific Krylov posterior covariances}
\label{A:BayesCGWithoutBayesCG}
\begin{algorithmic}[1]
  \State{\textbf{Inputs}: spd $\Amat\in\Rnn$, $\bvec\in\Rn$, $\xvec_0\in\Rn$, $d\geq 1$}
  %\State{\textbf{Output}: Posterior Mean $\xvec_m$, Factors $\widehat{\Vmat}$ and $\widehat{\Phimat}$ for rank-$d$ approximation $\widehat{\Gammat}_m$}
  \State{$  {\rvec}_0 =  {\bvec}- {\Amat\xvec}_0$} \Comment{define initial values}
  \State{$ {\vvec}_1 =  {\rvec}_0$}
  \State{$m = 0$}
  \While{not converged} \Comment{CG recursions for posterior means}
  \State{$m = m+1$}
  \State{$ \eta_m = \vvec_m^T\Amat\vvec_m$}
  \State{$\gamma_m = (\rvec_{m-1}^T\rvec_{m-1})\big/\eta_m$}
  \State{$ \xvec_m =  \xvec_{m-1} +   \gamma_i \vvec_i $}
  \State{$ \rvec_m =  \rvec_{m-1}- \gamma_i \Amat\vvec_i$}
  \State{$\delta_m = (\rvec_m^T\rvec_m)\big/(\rvec_{m-1}^T\rvec_{m-1})$}
  \State{$\vvec_{m+1} = \rvec_m + \delta_m\vvec_m$}
  \EndWhile
  \State{$d = \min\{d,\kry - m\}$} \Comment{compute full rank posterior if $d > \kry-m$}
  \State{$\Vmat_{m+1:m+d} = \vzero_{n\times d}$} \Comment{define posterior factor matrices}
  \State{$\Phimat_{m+1:m+d} = \vzero_{d\times d}$}
  \For{$j=m+1:m+d$} \Comment{$d$ additional iterations for posterior covariance}
  \State{$\eta_j = \vvec_j^T\Amat\vvec_j$}
  \State{$\gamma_j = (\rvec_{j-1}^T\rvec_{j-1})\big/\eta_j$}
  \State{$\Vmat_j = \vvec_j\big/\eta_j$} \Comment{store column $j$ of $\Vmat$}
  \State{$\Phimat_j = \gamma_j \|\rvec_{j-1}\|_2^2$} \Comment{store 
  element $j$ of $\Phimat$}  
  \State{$\rvec_j = \rvec_{j-1} - \gamma_j \Amat\vvec_j$}
  \State{$\delta_j = (\rvec_j^T\rvec_j)\big/(\rvec_{j-1}^T\rvec_{j-1})$}
  \State{$\vvec_{j+1} = \rvec_j + \delta_j\vvec_j$}
  \EndFor
  \State \textbf{Output:} {$\xvec_m$, $\Vmat_{m+1:m+d}$, $\Phimat_{m+1:m+d}$}
\end{algorithmic}
\end{algorithm}

\aref{A:BayesCGWithoutBayesCG} represents an efficient computation of BayesCG 
under rank-$d$ approximate Krylov posteriors, and consists of two 
loops\footnote{The partition of \aref{A:BayesCGWithoutBayesCG}  into two  loops is 
for the purpose expositional clarity. Alternatively, everything could have been 
 merged into a single loop with a conditional.}:
\begin{enumerate}
\item Run CG until convergence in iteration $m$ and compute the posterior mean $\xvec_m$
\item Run $d$ additional CG iterations and compute the factors 
$\Vmat_{m+1:m+d}$ and $\Phimat_{m+1:m+d}$ of the rank-$d$ approximate posterior
$\widehat\Gammat_m$.
 \end{enumerate}
 
\paragraph{Correctness}
\tref{T:KrylovPosterior} asserts that posteriors of BayesCG under the Krylov prior
have means that are identical to CG iterates, and covariances that 
can be maintained in factored form involving
submatrices of $\Vmat$ and $\Phimat$ from Definition~\ref{D:specKrylov}.
The rank $d$ of $\widehat{\Gamma}_m$ has the same purpose as the `delay' in CG error estimation: a small number of additional iterations to capture the error, and
$\trace(\Amat\widehat\Gammat_m) = \trace(\Phimat_{m+1:m+d})$ 
is equal to the error underestimate \eref{Eq:StrakosTichy}.
As a  termination criterion one can choose the usual residual norm, or a 
statistically motivated criterion.

\paragraph{Computational cost}
\aref{A:BayesCGWithoutBayesCG} performs 
fewer arithmetic operations than \aref{A:BayesCG}. 
Specifically, \aref{A:BayesCGWithoutBayesCG} runs $m+d$ iterations  of \aref{A:CG},
and a total of $m+d$ matrix vector products with~$\Amat$ and storage of at most $d+2$ vectors.
This is less than \aref{A:BayesCG}, which requires  $2m$ matrix vector products with~$\Amat$, $m$ matrix vector products with $\Sigmat_0$, and storage of $m+2$ vectors. 

In addition,  \aref{A:BayesCG} requires reorthogonalization to ensure positive 
semi-definiteness of
the posterior covariances \cite[Section 6.1]{Cockayne:BCG}.
In contrast, \aref{A:BayesCGWithoutBayesCG} maintains the Krylov posteriors in 
factored form, thus (i)  ensuring symmetric positive semi-definiteness by design;
and (ii) reducing the cost of sampling, because 
the factorizations  $\Sigmat_m = \Fmat_m\Fmat_m^T$ are readily available 
without any computations. The last point is important,
since the posterior is propagated to subsequent computations 
which sample from it to probe the effect of the uncertainty in the linear
solve. So far, analytical propagation of the posterior has proved elusive, 
and empirical propagation is our only option.

\section{Numerical experiments}
\label{S:Experiments}
We present numerical experiments to
compare (i) Algorithm~\ref{A:BayesCGWithoutBayesCG}
under full or rank-$d$ approximations of specific Krylov posteriors 
with
(ii) Algorithm~\ref{A:BayesCG} under the inverse prior.
After describing the experimental set up (\sref{S:ExpSetup}), we 
apply the algorithms to two matrices: a matrix of small dimension (\sref{S:ExpSmall}),
and one of larger dimension (\sref{S:ExpLarge}).

\subsection{Set up of the numerical experiments}
\label{S:ExpSetup}
We describe the linear systems in the experiments, reorthogonalization in the algorithms, and sampling from the posterior distributions.\footnote{The Python code used in the numerical experiments can be found at \url{https://github.com/treid5/ProbNumCG_Supp}}

\paragraph{Linear systems}
We consider two types of symmetric positive-definite 
 linear systems $\Amat\xvec_*=\bvec$:
one with a dense matrix $\Amat$ of dimension $n = 100$,  and the other 
with a sparse
preconditioned matrix $\Amat$ of dimension $n = 11948$. 
We fix the solution~$\xvec_*$, and compute the right hand side from
$\bvec = \Amat\xvec_*$.

For $n = 100$, the matrix is $\Amat= \Qmat\Dmat\Qmat^T$ \cite[Section 2]{GreenbaumStrakos},
where
$\Qmat$ is a random\footnote{The exact random matrix can be reproduced with the python files in our code repository because we specified the random seed.}
orthogonal matrix with Haar distribution \cite[Section 3]{Stewart:Haar}, and
$\Dmat$ is a diagonal matrix with eigenvalues  \cite{Greenbaum:Acc}
\begin{equation}
  \label{Eq:Eigs}
d_{ii} = (10^3)^{(i-1)/99},\qquad 1\leq i \leq 100.
\end{equation}
The condition number is $\kappa(\Amat) = 10^3$,
and the solution $\xvec_*$ is sampled from $\N(\zerovec,\Amat^{-1})$.

For $n = 11948$, the matrix $\Amat = \Lmat^{-1}\Bmat\Lmat^{-T}$ is a sparse
preconditioned matrix where 
$\Bmat$ is  \texttt{BCSSTK18} from the Harwell-Boeing collection \cite{MatrixMarket}, 
and $\Lmat$ is the incomplete Cholesky factorization \cite[Section 11.1]{Greenbaum} of the diagonally shifted matrix
\begin{equation*}
  \widetilde\Bmat = \Bmat + 9.0930\cdot 10^{8} \cdot\diag(\Bmat) \qquad   
  \text{with}\quad
\max_{1\leq i\leq \dm} \left\{-b_{ii} + \sum_{j\neq i}{b_{ij}}\right\}=  9.0930\cdot 10^{8}.
\end{equation*}
The  shift forces $\widetilde \Bmat$ to be diagonally dominant.
We compute the factorization of $\widetilde \Bmat$ with a threshold drop 
tolerance $10^{-6}$ to make $\Lmat$ diagonal. The condition number 
is $\kappa(\Amat)\approx 1.57\cdot 10^6$,
and the solution $\xvec_* = \onevec$ is the all ones vector.

\paragraph{Reorthogonalization}
Since the posterior covariances in \aref{A:BayesCG} become indefinite when 
the search directions lose orthogonality, reorthogonalization of
the search directions is recommended in every iteration,
 \cite[Section 6.1]{Cockayne:BCG} and \cite[Section 4.1]{BCG:Supp}.
Following \cite[Section 2]{GreenbaumStrakos},
we reorthogonalize the residual vectors instead, as 
it has the additional advantage of better numerical stability in our experience.
Reorthogonalization consists of classical Gram-Schmidt performed twice 
because it is efficient, easy to implement, and produces vectors orthogonal 
to almost  machine precision \cite{Giraud:CGS2Theory,Giraud:CGS2Exp}.

\paragraph{Sampling from the Gaussian distributions}
We exploit the stability of Gaussians, see section~\ref{S:BayesCGTheory},
to sample from $\N(\xvec,\Sigmat)$ as follows. Let $\Sigmat = \Fmat\Fmat^T$
 be a factorization of the covariance with $\Fmat\in\R^{n\times d}$.
Sample
a standard Gaussian vector\footnote{Most scientific computing packages 
come with built in functions for sampling from $\N(\zerovec,\Imat)$. In Matlab and Julia the function is \texttt{randn} and in Python it is \texttt{numpy.random.randn}.}
$\Zrv\sim\N(\vzero_d,\Imat_d)$; multiply it by $\Fmat$; and add the mean 
to obtain $\Xrv\equiv \xvec+\Fmat\Zrv \sim\N(\xvec,\Fmat\Fmat^T)$.

By design, the rank-$d$ approximate Krylov posteriors are maintained in factored form
 \begin{equation*}
\widehat{\Gammat}_m=\Fmat_m\Fmat_m^T \qquad \text{where} \qquad  
\Fmat_m\equiv \Vmat_{m+1:m+d}\,\Phimat_{m+1:m+d}^{1/2}\in\R^{n\times d}.
\end{equation*}
For all other posteriors $\Sigmat_m$, we factor the matrix square root \cite[Chapter 6]{Higham08} of the matrix absolute value \cite[Chapter 8]{Higham08} of $\Sigmat_m$\footnote{The matrix absolute value 
of $\Bmat\in\R^{n\times n}$ is $\mathrm{abs}(\Bmat) = (\Bmat^T\Bmat)^{1/2}$. If $\Bmat$ is symmetric positive semi-definite, then $\mathrm{abs}(\Bmat) = \Bmat$. 
Otherwise, the square root of 
the absolute value is $(\mathrm{abs}(\Bmat))^{1/2} = \Vmat\Smat^{1/2}\Vmat^T$,
where $\Bmat =\Umat\Smat\Vmat^T$ is a SVD.}. 
Factoring the absolute value of $\Sigmat_m$  enforces positive semi-definiteness 
of the posteriors which may be lost 
if BayesCG is implemented without reorthogonalization.

\paragraph{Convergence}
We display
convergence of the mean and covariance  with
$\|\xvec_*-\xvec_m\|_\Amat^2$ and  $\trace(\Amat\Sigmat_m)$. 
In addition, we sample from the posterior,
$\Xrv\sim\N(\xvec_m,\Sigmat_m)$ and
compare the resulting estimate
$\|\Xrv-\xvec_m\|_\Amat^2$ to the error $\|\xvec_*-\xvec_m\|_{\Amat}^2$. 
If the samples $\Xrv$ are accurate estimates,  then the posterior distribution
 is likely to be a reliable indicator of the uncertainty in the solution~$\xvec_*$.
  
\subsection{Matrix with small dimension}
\label{S:ExpSmall}
We compare  Algorithm \ref{A:BayesCG} under the inverse prior,
with Algorithm~\ref{A:BayesCGWithoutBayesCG}
under full or rank-$5$ approximate Krylov posteriors 
when applied
to the matrix with small dimension $n=100$.

\begin{figure}
  \centering
  \includegraphics[scale = .4]{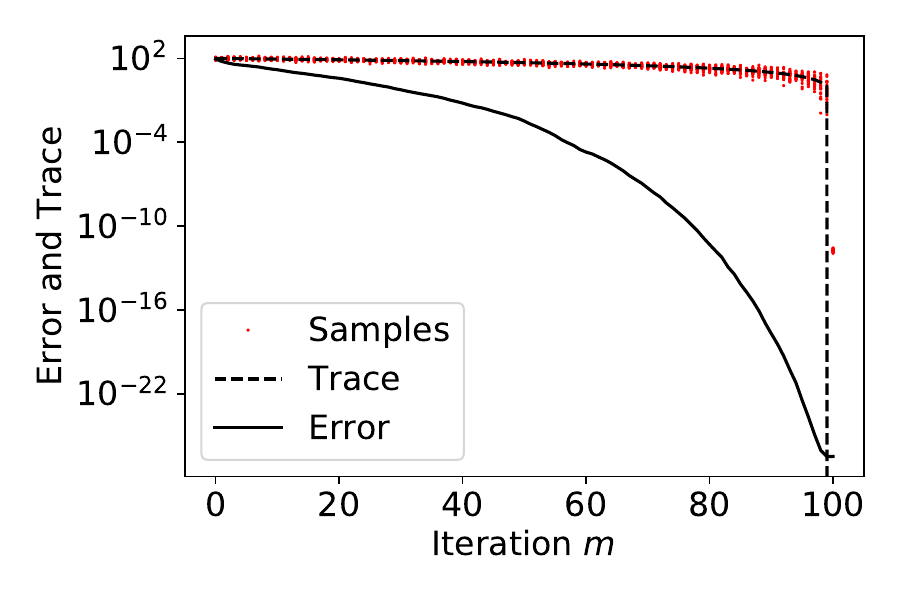}
  \includegraphics[scale = .4]{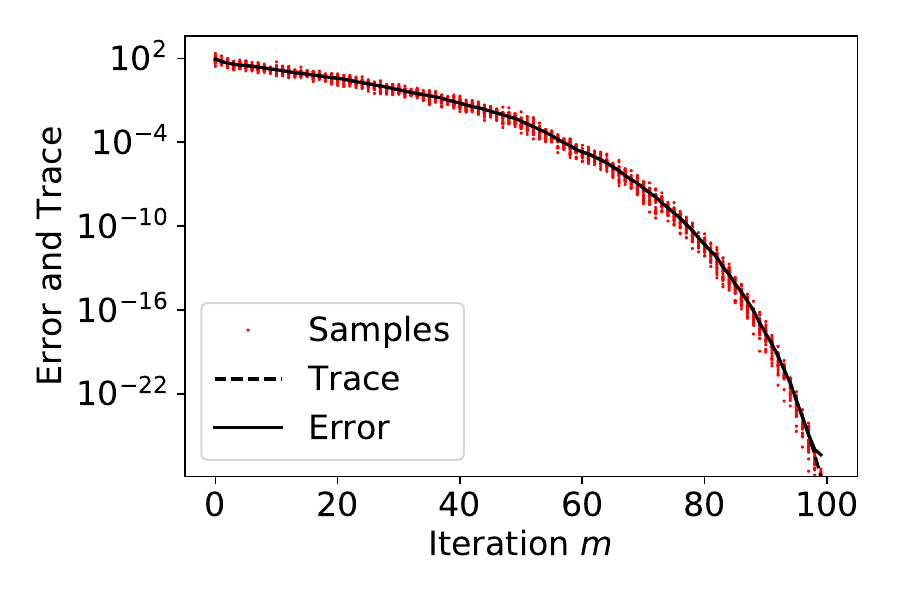} \\
  \includegraphics[scale = .4]{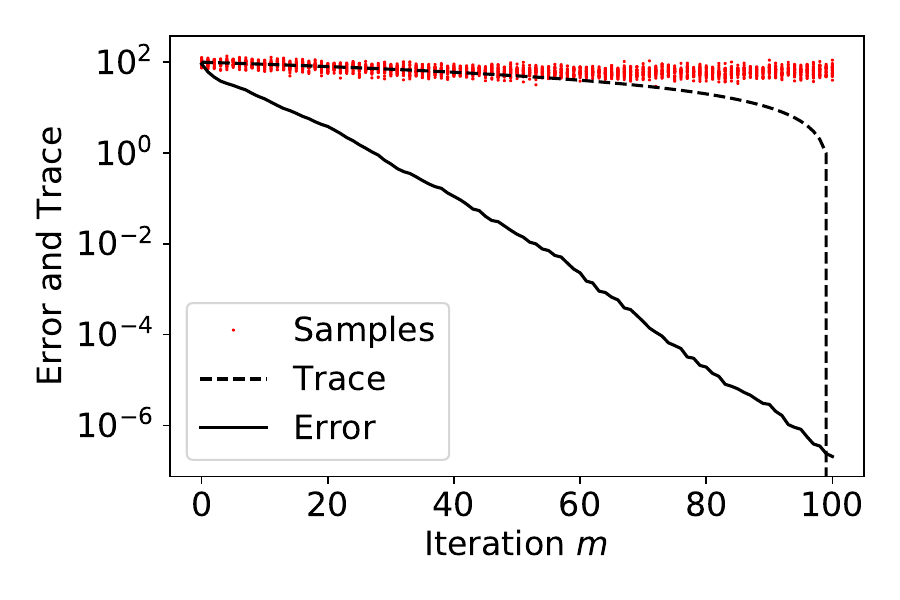}
  \includegraphics[scale = .4]{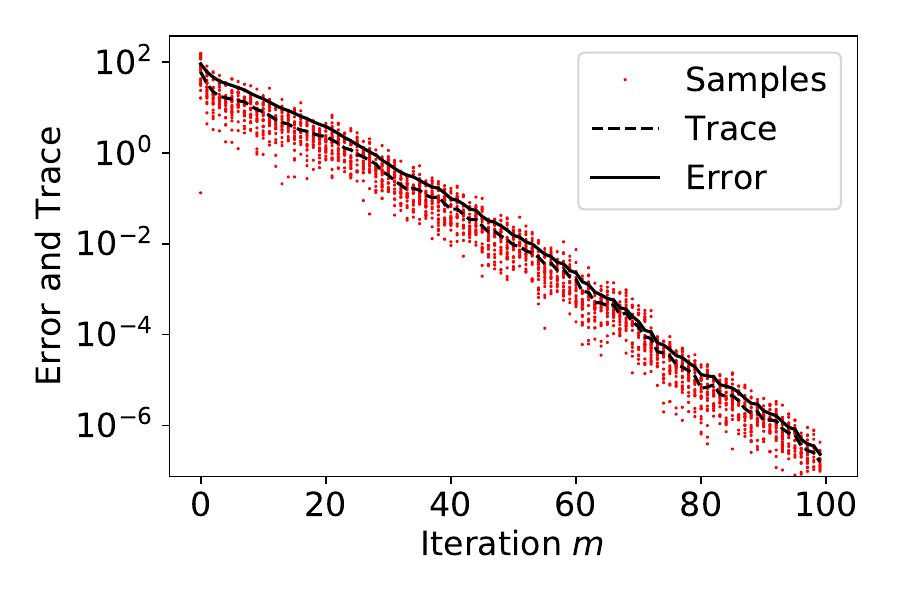}
  \caption{Error estimates $\|\Xrv-\xvec_m\|_\Amat^2$ and $\trace(A\Sigmat_m)$
  from samples $\Xrv\sim\N(\xvec_m,\Sigmat_m)$, for the 
  matrix with small dimension $n=100$. Top row:
Algorithm~\ref{A:BayesCG} with reorthogonalization under the inverse prior (left panel), and Algorithm~\ref{A:BayesCGWithoutBayesCG} under 
the full Krylov prior (right panel).
Bottom row: Algorithm~\ref{A:BayesCG} without reorthogonalization
under the inverse prior (left panel), and  
Algorithm~\ref{A:BayesCGWithoutBayesCG} under the rank-5 
approximate Krylov prior (right panel).}
  \label{F:Small}
\end{figure}

\figref{F:Small} illustrates that the posterior means converge at the same speed,
regardless of reorthogonalization. However, without reorthogonalization, the convergence is slower.

\paragraph{Algorithm~\ref{A:BayesCG} under the inverse prior} The posterior covariances converge more slowly than the squared errors of the means. Without reorthogonalization, the posterior covariances are indefinite, and the error estimates from the posterior samples 
diverge from $\trace(\Amat\Sigmat_m)$ and violate \lref{L:SExp}. Thus,
posteriors from BayesCG 
under the inverse prior are not reliable indicators of uncertainty.

\paragraph{Algorithm~\ref{A:BayesCGWithoutBayesCG} under full or approximate
Krylov priors} 
The quantity $\trace(\Amat \Sigmat_m)$ equals the error for full rank Krylov posteriors,
while  it underestimates the error for rank-5 approximate posteriors.
Error estimates from samples of Krylov posteriors
are significantly more accurate than those from the inverse posteriors. 
Thus, posteriors from BayesCG under (approximate) Krylov priors  are 
more reliable indicators uncertainty.

\subsection{Matrix with larger dimension}
\label{S:ExpLarge}
We compare Algorithm \ref{A:BayesCGWithoutBayesCG}
under rank-1 and rank-50 approximate Krylov posteriors,
when applied to the matrix with large dimension $n=11948$.

\begin{figure}
  \centering
  \includegraphics[scale = .4]{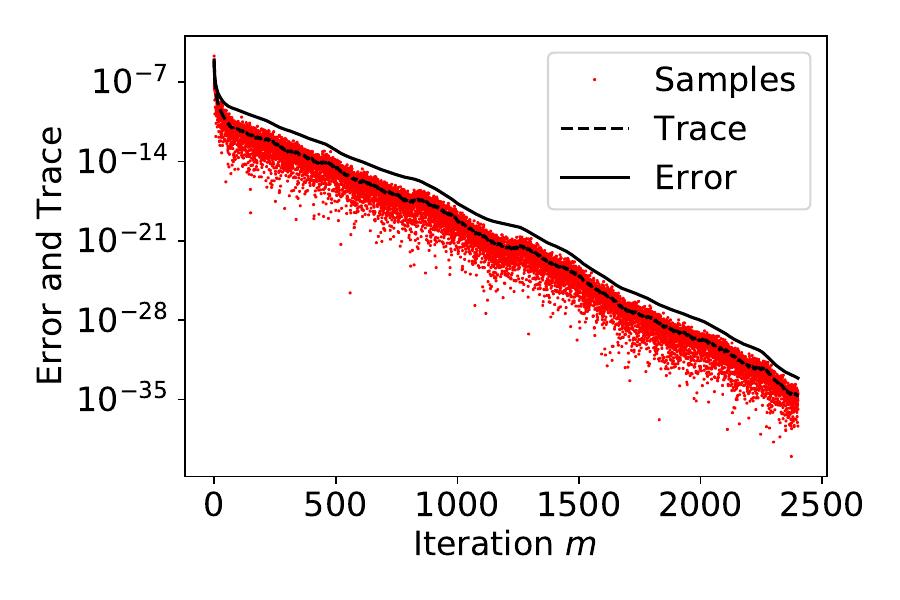}
  \includegraphics[scale = .4]{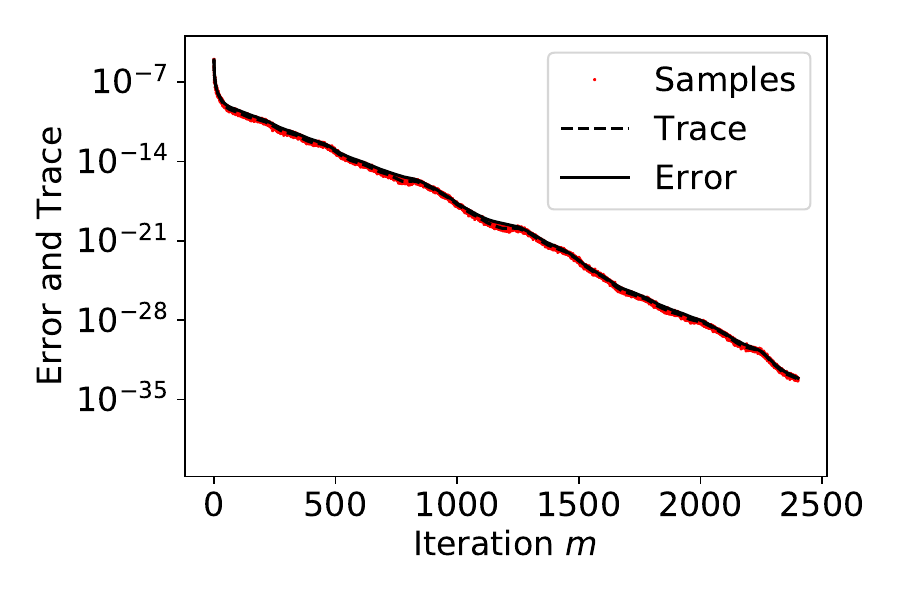}
  \caption{Error estimates $\|\Xrv-\xvec_m\|_\Amat^2$ and $\trace(\Amat\Sigma_m)$
  from samples  $\Xrv\sim\N(\xvec_m,\Sigmat_m)$, for the 
  matrix with large dimension $n=11948$.
Left: Algorithm \ref{A:BayesCGWithoutBayesCG} under rank-1 approximate Krylov 
  posterior. Right: Algorithm \ref{A:BayesCGWithoutBayesCG} under rank-50 approximate Krylov posterior.}
  \label{F:Large}
\end{figure}

 \figref{F:Large} illustrates that the  traces of the posterior covariances underestimate  the error.
 However, the trace of the rank-50 approximate Krylov covariance is more accurate, because 
 CG error estimates \eref{Eq:StrakosTichy} are more accurate for larger delays \cite[Section 4]{StrakosTichy}.
As expected, error estimates from rank-50 posterior samples  are more tightly concentrated around the true error than those of rank-1 posterior samples. 
Thus, BayesCG under higher rank approximate  posteriors 
produces more reliable indicators of uncertainty.

\section{Conclusion}
\label{s_conc}
BayesCG is our 'uncertainty-aware' version of CG, that is,
a probabilistic numerical extension of CG that produces a probabilistic model of the uncertainty about our knowledge of the solution $\xvec_*$
due to early termination of CG.
Under our Krylov prior, BayesCG produces iterates that are identical to those of CG
(in exact arithmetic), thus converges at the same speed as CG;
and its posterior distributions can be cheaply approximated.
Samples from the Krylov posterior and its low rank approximations 
produce accurate error estimates, 
thus represent realistic 
indicators of the uncertainty about $\xvec_*$. 

\paragraph{Future work}
In a forthcoming paper, we focus on the statistical aspects of BayesCG under the Krylov prior. More specifically, we quantify the approximation error of low rank approximate Krylov posteriors and investigate the calibration of BayesCG under
low-rank approximate Krylov posteriors.

In a separate paper, we assess the effect 
of CG accuracy in a computational pipeline
in the form of
a randomized algorithm for generalized singular value decomposition \cite{SHB21} with applications to  hyper-differential sensitivity analysis \cite{HBH20}.

%\begin{comment}
\appendix

\section{Proofs of Theorems~\ref{T:KrylovProj}, \ref{T:KrylovOpt}  
and~\ref{T:xRecursion}}\label{S:BayesCGProofs}

\begin{proof}[Proof of \tref{T:KrylovProj}]
  The proof is inspired by the proof of \cite[Proposition 3]{BCG:Rejoinder} for nonsingular $\Sigmat_0$.
  For singular $\Sigmat_0$, we replace the inverse by the Moore-Penrose inverse which satisfies
  \begin{equation}\label{e_MP}
    \Sigmat_0=\Sigmat_0 \Sigmat_0^{\dagger}\Sigmat_0.
  \end{equation}
  The assumption $\xvec_*-\xvec_0\in\range(\Sigmat_0)$ implies that there exists $\yvec\in\Rd$ so that
  \begin{equation}\label{e_0}
    \xvec_*-\xvec_0=\Sigmat_0\yvec=\Sigmat_0\Sigmat_0^{\dagger}\Sigmat_0\yvec = 
    \Sigmat_0\Sigmat_0^{\dagger} (\xvec_*-\xvec_0).
  \end{equation}
 % where the penultimate equality follows from (\ref{e_MP}).
  The proof proceeds in four steps.
  \paragraph{Range of $\Pmat_m$}
On the one hand \eref{Eq:PostProj} implies
  \begin{align*}
    \range(\Pmat_m)=\range\left(\Sigmat_0\Amat\Smat_m
    \Lammat_m^{-1}\Smat_m^T\Amat\Sigmat_0\Sigmat_0^\dagger\right)\subset
  \range\left(\Sigmat_0\Amat\Smat_m\right).  
    \end{align*}
On the other hand \eref{Eq:PostProj} and \eref{e_MP} imply
  \begin{align*}
    \Pmat_m\Sigmat_0\Amat\Smat_m =
    \Sigmat_0\Amat\Smat_m\Lammat_m^{-1}\overbrace{\Smat_m^T\Amat
    \underbrace{\Sigmat_0\Sigmat_0^{\dagger}\Sigmat_0}_{\Sigmat_0} \Amat\Smat_m}^{\Lammat_m} = \Sigmat_0\Amat\Smat_m
  \end{align*}
so that $\range(\Sigmat_0\Amat\Smat_m)\subset\range(\Pmat_m)$.

Combining the two inclusions gives $\range(\Pmat_m)=K_m\equiv\range(\Sigmat_0\Amat\Smat_m)$.
  \paragraph{$\Pmat_m$ is a $\Sigmat_0^{\dagger}$-orthogonal projector}
  The above implies
  \begin{align}\label{e_1}
    \Pmat_m^2=\underbrace{\Pmat_m\Sigmat_0\Amat\Smat_m}_{\Sigmat_0\Amat\Smat_m}
    \Lammat_m^{-1}\Smat_m^T\Amat\Sigmat_0\Sigmat_0^\dagger=
    \Sigmat_0\Amat\Smat_m\Lammat_m^{-1}\Smat_m^T\Amat\Sigmat_0\Sigmat_0^\dagger=\Pmat_m.
  \end{align} 
  Thus $\Pmat_m$ is a projector. The $\Sigmat_0^{\dagger}$-orthogonality of $\Pmat_m$ follows from the
  symmetry of  $\Sigmat_0^{\dagger}\Pmat$.
  \paragraph{Posterior mean}
  From \eref{Eq:XmTheory}, \eref{e_0}, and \eref{Eq:PostProj} follows
  \begin{align*}
    \xvec_m &= \xvec_0 + \Sigmat_0 \Amat\Smat_m \Lammat_m^{-1} \Smat_m^T\Amat(\xvec_*-\xvec_0) \\
            &= \xvec_0 + \Sigmat_0 \Amat\Smat_m \Lammat_m^{-1} \Smat_m^T\Amat\Sigmat_0\Sigmat_0^{\dagger}(\xvec_*-\xvec_0) 
            = (\Imat - \Pmat_m)\xvec_0 + \Pmat_m\xvec_*.
  \end{align*}
  \paragraph{Posterior covariance}
  From \eref{Eq:SigmTheory}, \eref{e_MP} and \eref{Eq:PostProj}  follows
  \begin{align*}
    \Sigmat_m &= \Sigmat_0 -  \Sigmat_0 \Amat\Smat_m \Lammat_m^{-1} \Smat_m^T \Amat \Sigmat_0 \\
              &= \Sigmat_0 -  \Sigmat_0 \Amat\Smat_m \Lammat_m^{-1} \Smat_m^T \Amat \Sigmat_0\Sigmat_0^\dagger\Sigmat_0 
             = (\Imat - \Pmat_m)\Sigmat_0.
  \end{align*}
  Multiply $\Sigmat_m$ on the left by $\Pmat_m$ and apply (\ref{e_1}) to obtain
 $\Pmat_m \Sigmat_m = \Pmat_m(\Imat-\Pmat_m)\Sigmat_0=\vzero.$ 
\end{proof}

The proof of \tref{T:KrylovOpt} relies on the next three results related to semi-definite inner product spaces and orthogonal projectors in those spaces.

\begin{lemma}
  \label{L:ErrorRange}
Under the assumptions of \tref{T:XmSigmTheory}, if $\xvec_*-\xvec_0\in\range(\Sigmat_0)$, then 
$\xvec_*-\xvec_m\in\range(\Sigmat_0)$, $1\leq m \leq n$.
\end{lemma}

\begin{proof}
  Subtract from $\xvec_*$ both sides of the posterior mean (\ref{Eq:XmTheory}),  
  \begin{equation*}
    \xvec_*-\xvec_m =(\xvec_*-\xvec_0) - \Sigmat_0 \Amat\Smat_m \Lammat_m^{-1} \Smat_m^T\Amat(\xvec_*-\xvec_0), \qquad 1 \leq m \leq n.
  \end{equation*}
  The first summand $\xvec_*-\xvec_0$ is in $\range(\Sigmat_0)$ by assumption, and the second one by design, hence so is the sum. 
\end{proof}

\begin{lemma}
  \label{L:SemiNorm}
  Let $\Bmat\in\Rnn$ be symmetric positive semi-definite. If $\zvec\in\range(\Bmat)$, then $\zvec^T\Bmat\zvec = 0$ if and only if $\zvec = \zerovec$.
\end{lemma}

\begin{proof}
  Since $\Bmat$ is symmetric positive semi-definite, we can factor $\Fmat\Fmat^T = \Bmat$, where $\Fmat$ has full column rank. %$\range(\Fmat) = \range(\Bmat)$.
  Let $\wvec = \Fmat^T\zvec$. From $\zvec \in \range(\Bmat) = \range(\Fmat)$, and $\range(\Fmat)= \ker(\Fmat^T)^{\perp}$
  follows that $\wvec = \Fmat^T\zvec = \zerovec$ if and only if $\zvec = \zerovec$. Therefore $\wvec^T\wvec = \zvec^T\Bmat\zvec = 0$ 
  if and only if $\zvec = \zerovec$.
\end{proof}

\begin{lemma}
  \label{L:OrthProj}
  Let $\mathcal{X}\subseteq\Rn$ be a subspace,  $\Bmat\in\Rnn$ symmetric positive semi-definite, and $\vvec\in\Rn$. 
  If $\Pmat$ is a $\Bmat$-orthogonal projector onto $\mathcal{X}$, then
  \begin{align*}
    \argmin_{\xvec\in\mathcal{X}}(\vvec-\xvec)^T\Bmat (\vvec-\xvec) = 
    \{\xvec\in\mathcal{X}: (\xvec-\Pmat\vvec)^T\Bmat(\xvec-\Pmat\vvec)=0\}.
  \end{align*}
  If additionally $\mathcal{X} \subseteq \range(\Bmat)$, then
  \begin{equation*}
    \argmin_{\xvec\in\mathcal{X}}(\vvec-\xvec)^T\Bmat (\vvec-\xvec) = \Pmat\vvec.
  \end{equation*}
\end{lemma}

\begin{proof}
After proving the general case, we show that the minimizer is unique if $\mathcal{X} \subseteq \range(\Bmat)$.
  \paragraph{General case}
  Abbreviate the induced semi-norm by $|\zvec|_\Bmat^2 = \zvec^T\Bmat \zvec$. 
Since $\Pmat$ is a projector onto $\mathcal{X}$, we can write $\xvec=\Pmat\xvec$ for $\xvec\in\mathcal{X}$. 
Add and subtract $\Pmat\vvec$ inside the norm to obtain a Pythagoras-like theorem,
  \begin{align*}
    |\vvec-\xvec|_\Bmat^2 &= |(\Imat-\Pmat)\vvec+\Pmat(\vvec-\xvec)|_\Bmat^2  \\
                          &  = |(\Imat-\Pmat)\vvec|_\Bmat^2 + |\Pmat(\vvec - \xvec)|_\Bmat^2 + 
    2\vvec^T\underbrace{(\Imat-\Pmat)^T\Bmat \Pmat}_{=\vzero}(\vvec-\xvec)\\
                          &= |(\Imat-\Pmat)\vvec|_\Bmat^2 + |\Pmat\vvec - \xvec|_\Bmat^2.
  \end{align*}
  Since the first summand is independent of $\xvec$, the minimum is achieved if the second summand is zero.
  \paragraph{Uniqueness}
  Since $\Pmat$ is a projector onto $\mathcal{X}$, $\Pmat\vvec\in\mathcal{X}$.
 From $\mathcal{X} \subseteq \range(\Bmat)$
 follows $\Pmat\vvec\in\range(\Bmat)$ and $\xvec\in\range(\Bmat)$. With \lref{L:SemiNorm} this implies: $|\Pmat\vvec - \xvec|_\Bmat^2 = 0$ only if $\Pmat\vvec = \xvec$.
\end{proof}

\begin{proof}[Proof of \tref{T:KrylovOpt}]
  This is similar to \cite[Proof of Proposition 4]{Bartels}.
  Minimizing \eref{Eq:KrylovOptGen} over the affine space $\xvec_0+K_m=\xvec_0+\range(\Sigmat_0\Amat\Smat_m)$
  is equivalent to shifting by $\xvec_0$ and minimizing over $K_m$,
  \begin{equation*}
    \label{Eq:KrylovOptProof}
    \min_{x\in \xvec_0+K_m} (\xvec_*-\xvec)^T\Sigmat_0^\dagger(\xvec_*-\xvec)=
    \min_{\xvec\in K_m}((\xvec_*-\xvec_0)-\xvec)^T\Sigmat_0^\dagger ((\xvec_*-\xvec_0)-\xvec).
  \end{equation*}
  Since $\Sigmat_0$ is symmetric, the $\Sigmat_0^\dagger$-orthogonal projector~$\Pmat_m$ from \tref{T:KrylovProj} satisfies $\range(\Pmat_m) = K_m \subseteq \range(\Sigmat_0) = \range(\Sigmat_0^\dagger)$. Therefore, \lref{L:OrthProj} implies
  \begin{align*}
    \argmin_{\xvec\in K_m} ((\xvec_*-\xvec_0)-\xvec)^T\Sigmat_0^\dagger ((\xvec_*-\xvec_0)-\xvec) = \Pmat(\xvec_*-\xvec_0).
  \end{align*}
  From \tref{T:KrylovProj} and $K_m=\range(\Pmat_m)$ follows $\xvec_m-\xvec_0=\Pmat_m(\xvec_*-\xvec_0)\in K_m$. Thus $\xvec_m\in\xvec_0+K_m$ is the minimizer.

The symmetry of $\Sigmat_m$ and Lemmas \ref{L:ErrorRange} and \ref{L:SemiNorm} imply
that $(\xvec_*-\xvec_m)^T\Sigmat_0^\dagger(\xvec_*-\xvec_m) = 0$ only if $\xvec_m = \xvec_*$.
\end{proof}

\begin{proof}[Proof of \tref{T:xRecursion}]
Recursion \eref{Eq:xRecursion} was shown in \cite[Proposition 6]{Cockayne:BCG}.
The following proof for \eref{Eq:SigRecursion} is analogous to  \cite[Proof of Proposition 6]{BCG:Supp}. From (\ref{Eq:SigmTheory}) follows that the posterior covariance at iteration $i$ amounts to a rank-$i$ downdate of the prior,
  \begin{equation*}
    \Sigmat_i = \Sigmat_0 - \Sigmat_0\Amat\Smat_i\Lammat_i^{-1}\left(\Sigmat_0\Amat\Smat_i\right)^T, \qquad
    1\leq i\leq m.
  \end{equation*}
 Here $\Lammat_i$ is diagonal due to the $\Amat\Sigmat_0\Amat$-orthogonality of the search directions, hence a rank-$i$ downdate can be computed as a recursive sequence of $i$ rank-1 downdates,
  \begin{align*}
    \Sigmat_i = \underbrace{\Sigmat_0 - \Sigmat_0\Amat\Smat_{i-1}\Lammat_{i-1}^{-1}(\Sigmat_0\Amat\Smat_{i-1})^T}_{\Sigmat_{i-1}} - \frac{\Sigmat_0\Amat\svec_i\left(\Sigmat_0\Amat\svec_i\right)^T}{\svec_i^T\Amat\Sigmat_0\Amat\svec_i}.
  \end{align*}
\end{proof}

\section{Auxiliary results}
\label{S:Aux}

\begin{lemma}[Lemma S3 in \cite{BCG:Supp}]
  \label{L:SOrth}
 Under the assumptions of \tref{T:xRecursion},
  \begin{equation*}
    \svec_j^T\rvec_i = 0 , \qquad 1 \leq j\leq i \leq m.
  \end{equation*}
\end{lemma}

\begin{lemma}[Sections 3.2b.1--3.2b.3 in \cite{Mathai}]
  \label{L:QuadExp}
  Let $\Zrv\sim\N(\xvec ,\Sigmat )$ be a Gaussian random variable with mean $\xvec \in\Rn$ and covariance $\Sigmat \in\Rnn$, and let $\Bmat\in\Rnn$ be symmetric positive definite. 
  The mean and variance of $\Zrv^T\Bmat\Zrv$ are
  \begin{align*}
    \Exp[\Zrv^T\Bmat\Zrv] &= \trace(\Bmat\Sigmat ) + \xvec ^T\Bmat\xvec ,\\
    \Var[\Zrv^T\Bmat\Zrv] & = 2 \trace((\Bmat\Sigmat )^2) + 4 \,\xvec ^T\Bmat\Sigmat \Bmat\xvec.
  \end{align*}
\end{lemma}

\subsection*{Acknowledgments}
We thank Eric Hallman, Joseph Hart, and the members of the NCSU Randomized Numerical Analysis RTG for helpful discussions. We are also most grateful
to the reviewers for their recommendations that helped to ensure mathematical correctness and
improve exposition.

\bibliography{BCGSources}

\begin{thebibliography}{10}

\bibitem{Bartels}
{\sc S.~Bartels, J.~Cockayne, I.~C.~F. Ipsen, and P.~Hennig}, {\em
  Probabilistic linear solvers: a unifying view}, Stat. Comput., 29 (2019),
  pp.~1249--1263, \url{https://doi.org/10.1007/s11222-019-09897-7}.

\bibitem{BH2016}
{\sc S.~Bartels and P.~Hennig}, {\em Probabilistic approximate least-squares},
  in Proc. 19th Int. Conf. Artificial Intelligence and Statistics, vol.~51 of
  Proc.Machine Learning Research, MLR Press, 2016, pp.~676--–684.

\bibitem{Berljafa}
{\sc M.~Berljafa and S.~G\"{u}ttel}, {\em Generalized rational {K}rylov
  decompositions with an application to rational approximation}, SIAM J. Matrix
  Anal. Appl., 36 (2015), pp.~894--916,
  \url{https://doi.org/10.1137/140998081}.

\bibitem{BOGOS19}
{\sc F.-X. Briol, C.~J. Oates, M.~Girolami, M.~A. Osborne, and D.~Sejdinovic},
  {\em {Probabilistic Integration: A Role in Statistical Computation?}},
  Statist. Sci., 34 (2019), pp.~1 -- 22,
  \url{https://doi.org/10.1214/18-STS660}.

\bibitem{Calvetti:BCG}
{\sc D.~{Calvetti}}, {\em Contributed discussion for ``{A} {B}ayesian conjugate
  gradient method''}, Bayesian Anal., 14 (2019), pp.~937--1012,
  \url{https://doi.org/10.1214/19-BA1145}.

\bibitem{CGOS21}
{\sc J.~Cockayne, M.~M. Graham, C.~J. Oates, and T.~J. Sullivan}, {\em Testing
  whether a learning procedure is calibrated}, 2021,
  \url{https://arxiv.org/abs/2012.12670}.
\newblock arXiv:2012.12670.

\bibitem{CIOR20}
{\sc J.~Cockayne, I.~C.~F. Ipsen, C.~J. Oates, and T.~W. Reid}, {\em
  Probabilistic iterative methods for linear systems}, J. Mach. Learn. Res., 22
  (232) (2021), pp.~1--34.

\bibitem{COSG17}
{\sc J.~Cockayne, C.~Oates, T.~Sullivan, and M.~Girolami}, {\em Probabilistic
  numerical methods for {PDE}-constrained {B}ayesian inverse problems}, AIP
  Conference Proceedings, 1853 (2017), p.~060001,
  \url{https://doi.org/10.1063/1.4985359}.

\bibitem{Cockayne:BCG}
{\sc J.~Cockayne, C.~J. Oates, I.~C.~F. Ipsen, and M.~Girolami}, {\em A
  {B}ayesian conjugate gradient method (with discussion)}, Bayesian Anal., 14
  (2019), pp.~937--1012, \url{https://doi.org/10.1214/19-BA1145}.
\newblock Includes 6 discussions and a rejoinder from the authors.

\bibitem{BCG:Rejoinder}
{\sc J.~Cockayne, C.~J. Oates, I.~C.~F. Ipsen, and M.~Girolami}, {\em Rejoinder
  for ``{A} {B}ayesian conjugate gradient method''}, Bayesian Anal., 14 (2019),
  pp.~937--1012, \url{https://doi.org/10.1214/19-BA1145}.

\bibitem{BCG:Supp}
{\sc J.~Cockayne, C.~J. Oates, I.~C.~F. Ipsen, and M.~Girolami}, {\em
  Supplementary material for `{A} {B}ayesian conjugate-gradient method'},
  Bayesian Anal.,  (2019), \url{https://doi.org/10.1214/19-BA1145SUPP}.

\bibitem{Cockayne:BPNM}
{\sc J.~Cockayne, C.~J. Oates, T.~J. Sullivan, and M.~Girolami}, {\em Bayesian
  probabilistic numerical methods}, SIAM Rev., 61 (2019), pp.~756--789,
  \url{https://doi.org/10.1137/17M1139357}.

\bibitem{Fanaskov21}
{\sc V.~Fanaskov}, {\em Uncertainty calibration for probabilistic projection
  methods}, Stat. Comput., 31 (2021), pp.~Paper No. 56, 17,
  \url{https://doi.org/10.1007/s11222-021-10031-9},
  \url{https://doi.org/10.1007/s11222-021-10031-9}.

\bibitem{GKH20}
{\sc A.~Gessner, O.~Kanjilal, and P.~Hennig}, {\em Integrals over {G}aussians
  under linear domain constraints}, in Proceedings of the Twenty Third
  International Conference on Artificial Intelligence and Statistics,
  S.~Chiappa and R.~Calandra, eds., vol.~108 of Proceedings of Machine Learning
  Research, 2020, pp.~2764--2774,
  \url{http://proceedings.mlr.press/v108/gessner20a.html}.

\bibitem{Giraud:CGS2Theory}
{\sc L.~Giraud, J.~Langou, M.~Rozlo\v{z}n\'{\i}k, and J.~van~den Eshof}, {\em
  Rounding error analysis of the classical {G}ram-{S}chmidt orthogonalization
  process}, Numer. Math., 101 (2005), pp.~87--100,
  \url{https://doi.org/10.1007/s00211-005-0615-4}.

\bibitem{Giraud:CGS2Exp}
{\sc L.~Giraud, J.~Langou, and M.~Rozloznik}, {\em The loss of orthogonality in
  the {G}ram-{S}chmidt orthogonalization process}, Comput. Math. Appl., 50
  (2005), pp.~1069--1075, \url{https://doi.org/10.1016/j.camwa.2005.08.009}.

\bibitem{GolubMeurant93}
{\sc G.~H. Golub and G.~Meurant}, {\em Matrices, moments and quadrature}, in
  Numerical analysis 1993 ({D}undee, 1993), vol.~303 of Pitman Res. Notes Math.
  Ser., Longman Sci. Tech., Harlow, 1994, pp.~105--156.

\bibitem{GolubMeurant97}
{\sc G.~H. Golub and G.~Meurant}, {\em Matrices, moments and quadrature. {II}.
  {H}ow to compute the norm of the error in iterative methods}, BIT, 37 (1997),
  pp.~687--705, \url{https://doi.org/10.1007/BF02510247}.

\bibitem{GolubStrakos}
{\sc G.~H. Golub and Z.~Strako\v{s}}, {\em Estimates in quadratic formulas},
  Numer. Algorithms, 8 (1994), pp.~241--268,
  \url{https://doi.org/10.1007/BF02142693}.

\bibitem{Greenbaum:Acc}
{\sc A.~Greenbaum}, {\em Estimating the attainable accuracy of recursively
  computed residual methods}, SIAM J. Matrix Anal. Appl., 18 (1997),
  pp.~535--551, \url{https://doi.org/10.1137/S0895479895284944}.

\bibitem{Greenbaum}
{\sc A.~Greenbaum}, {\em Iterative methods for solving linear systems}, vol.~17
  of Frontiers in Applied Mathematics, Society for Industrial and Applied
  Mathematics (SIAM), Philadelphia, PA, 1997,
  \url{https://doi.org/10.1137/1.9781611970937}.

\bibitem{GreenbaumStrakos}
{\sc A.~Greenbaum and Z.~Strako\v{s}}, {\em Predicting the behavior of finite
  precision {L}anczos and conjugate gradient computations}, SIAM J. Matrix
  Anal. Appl., 13 (1992), pp.~121--137, \url{https://doi.org/10.1137/0613011}.

\bibitem{HBH20}
{\sc J.~Hart, B.~{van Bloemen Waanders}, and R.~Herzog}, {\em Hyperdifferential
  sensitivity analysis of uncertain parameters in {PDE}-constrained
  optimization}, Int. J. for Uncertain. Quantif., 10 (2020), pp.~225--248,
  \url{https://doi.org/10.1615/Int.J.UncertaintyQuantification.2020032480}.

\bibitem{Hennig}
{\sc P.~Hennig}, {\em Probabilistic interpretation of linear solvers}, SIAM J.
  Optim., 25 (2015), pp.~234--260, \url{https://doi.org/10.1137/140955501}.

\bibitem{HOG15}
{\sc P.~Hennig, M.~A. Osborne, and M.~Girolami}, {\em Probabilistic numerics
  and uncertainty in computations}, Proc. A., 471 (2015), pp.~20150142, 17.

\bibitem{Hestenes}
{\sc M.~R. Hestenes and E.~Stiefel}, {\em Methods of conjugate gradients for
  solving linear systems}, J. Research Nat. Bur. Standards, 49 (1952),
  pp.~409--436, \url{https://doi.org/10.6028/jres.049.044}.

\bibitem{Higham08}
{\sc N.~J. Higham}, {\em Functions of matrices. {Theory} and computation},
  Society for Industrial and Applied Mathematics (SIAM), Philadelphia, PA,
  2008, \url{https://doi.org/10.1137/1.9780898717778}.

\bibitem{HornJohnson13}
{\sc R.~A. Horn and C.~R. Johnson}, {\em Matrix Analysis}, Cambridge University
  Press, second~ed., 2013.

\bibitem{KOS18}
{\sc T.~Karvonen, C.~J. Oates, and S.~Sarkka}, {\em A {B}ayes-{S}ard cubature
  method}, in Advances in Neural Information Processing Systems, S.~Bengio,
  H.~Wallach, H.~Larochelle, K.~Grauman, N.~Cesa-Bianchi, and R.~Garnett, eds.,
  vol.~31, Curran Associates, Inc., 2018,
  \url{https://proceedings.neurips.cc/paper/2018/file/6775a0635c302542da2c32aa19d86be0-Paper.pdf}.

\bibitem{LiFang:BCG}
{\sc L.~{Li} and E.~X. {Fang}}, {\em Invited discussion for ``{A} {B}ayesian
  conjugate gradient method''}, Bayesian Anal., 14 (2019), pp.~937--1012,
  \url{https://doi.org/10.1214/19-BA1145}.

\bibitem{Liesen}
{\sc J.~Liesen and Z.~Strakos}, {\em Krylov Subspace Methods: Principles and
  Analysis}, Oxford University Press, 2013.

\bibitem{Mathai}
{\sc A.~M. Mathai and S.~B. Provost}, {\em Quadratic forms in random variables:
  {Theory} and applications}, Dekker, 1992.

\bibitem{MatrixMarket}
{\sc {Matrix Market}}, {\em {BCSSTK18: BCS Structural Engineering Matrices
  (linear equations) R.E. Ginna Nuclear Power Station}},
  \url{https://math.nist.gov/MatrixMarket/data/Harwell-Boeing/bcsstruc2/bcsstk18.html}.

\bibitem{MM21}
{\sc T.~Matsuda and Y.~Miyatake}, {\em Estimation of ordinary differential
  equation models with discretization error quantification}, SIAM/ASA J.
  Uncertain. Quantif., 9 (2021), pp.~302--331,
  \url{https://doi.org/10.1137/19M1278405}.

\bibitem{Meurant:Bounds}
{\sc G.~Meurant}, {\em The computation of bounds for the norm of the error in
  the conjugate gradient algorithm}, Numer. Algorithms, 16 (1997), pp.~77--87
  (1998), \url{https://doi.org/10.1023/A:1019178811767}.
\newblock Sparse matrices in industry (Lille, 1997).

\bibitem{MeurantTichy13}
{\sc G.~Meurant and P.~Tich\'{y}}, {\em On computing quadrature-based bounds
  for the {$A$}-norm of the error in conjugate gradients}, Numer. Algorithms,
  62 (2013), pp.~163--191, \url{https://doi.org/10.1007/s11075-012-9591-9}.

\bibitem{MeurantTichy19}
{\sc G.~Meurant and P.~Tich\'{y}}, {\em Approximating the extreme {R}itz values
  and upper bounds for the {$A$}-norm of the error in {CG}}, Numer. Algorithms,
  82 (2019), pp.~937--968, \url{https://doi.org/10.1007/s11075-018-0634-8}.

\bibitem{Mockus75}
{\sc J.~Mo{\v{c}}kus}, {\em On {B}ayesian methods for seeking the extremum}, in
  Optimization Techniques IFIP Technical Conference Novosibirsk, July 1--7,
  1974, G.~I. Marchuk, ed., Berlin, Heidelberg, 1975, Springer Berlin
  Heidelberg, pp.~400--404.

\bibitem{Muirhead}
{\sc R.~J. Muirhead}, {\em Aspects of multivariate statistical theory}, John
  Wiley \& Sons, Inc., New York, 1982.
\newblock Wiley Series in Probability and Mathematical Statistics.

\bibitem{NW06}
{\sc J.~Nocedal and S.~J. Wright}, {\em Numerical optimization}, Springer
  Series in Operations Research and Financial Engineering, Springer, New York,
  second~ed., 2006.

\bibitem{OCAG19}
{\sc C.~J. Oates, J.~Cockayne, R.~G. Aykroyd, and M.~Girolami}, {\em Bayesian
  probabilistic numerical methods in time-dependent state estimation for
  industrial hydrocyclone equipment}, J. Amer. Statist. Assoc., 114 (2019),
  pp.~1518--1531, \url{https://doi.org/10.1080/01621459.2019.1574583}.

\bibitem{Oates}
{\sc C.~J. Oates and T.~J. Sullivan}, {\em A modern retrospective on
  probabilistic numerics}, Stat. Comput., 29 (2019), pp.~1335--1351,
  \url{https://doi.org/10.1007/s11222-019-09902-z}.

\bibitem{PZSHG12}
{\sc N.~Petra, H.~Zhu, G.~Stadler, T.~Hughes, and O.~Ghattas}, {\em An inexact
  {G}auss-{N}ewton method for inversion of basal sliding and rheology
  parameters in a nonlinear {S}tokes ice sheet model}, J. Glaciology, 58
  (2012), p.~889–903, \url{https://doi.org/10.3189/2012JoG11J182}.

\bibitem{SHB21}
{\sc A.~K. Saibaba, J.~Hart, and B.~{van Bloemen Waanders}}, {\em Randomized
  algorithms for generalized singular value decomposition with application to
  sensitivity analysis}, Numer. Linear Algebra Appl.,  (2021), p.~e2364,
  \url{https://doi.org/10.1002/nla.2364}.

\bibitem{SSO21}
{\sc F.~Sch\"{a}fer, T.~J. Sullivan, and H.~Owhadi}, {\em Compression,
  inversion, and approximate {PCA} of dense kernel matrices at near-linear
  computational complexity}, Multiscale Model. Simul., 19 (2021), pp.~688--730,
  \url{https://doi.org/10.1137/19M129526X}.

\bibitem{SLA12}
{\sc J.~Snoek, H.~Larochelle, and R.~P. Adams}, {\em Practical {B}ayesian
  optimization of machine learning algorithms}, in Proceedings of the 25th
  International Conference on Neural Information Processing Systems - Volume 2,
  NIPS'12, Red Hook, NY, USA, 2012, Curran Associates Inc., p.~2951–2959.

\bibitem{Stewart:Haar}
{\sc G.~W. Stewart}, {\em The efficient generation of random orthogonal
  matrices with an application to condition estimators}, SIAM J. Numer. Anal.,
  17 (1980), pp.~403--409, \url{https://doi.org/10.1137/0717034}.

\bibitem{StS90}
{\sc G.~W. Stewart and J.-G. Sun}, {\em Matrix Perturbation Theory}, Academic
  Press, San Diego, 1990.

\bibitem{StrakosTichy}
{\sc Z.~Strako\v{s} and P.~Tich\'{y}}, {\em On error estimation in the
  conjugate gradient method and why it works in finite precision computations},
  Electron. Trans. Numer. Anal., 13 (2002), pp.~56--80.

\bibitem{StrakosTichy:Preconditioner}
{\sc Z.~Strako\v{s} and P.~Tich\'{y}}, {\em Error estimation in preconditioned
  conjugate gradients}, BIT, 45 (2005), pp.~789--817,
  \url{https://doi.org/10.1007/s10543-005-0032-1}.

\bibitem{Stuart:BayesInverse}
{\sc A.~M. Stuart}, {\em Inverse problems: a {B}ayesian perspective}, Acta
  Numer., 19 (2010), pp.~451--559,
  \url{https://doi.org/10.1017/S0962492910000061}.

\bibitem{TKSH19}
{\sc F.~Tronarp, H.~Kersting, S.~S\"{a}rkk\"{a}, and P.~Hennig}, {\em
  Probabilistic solutions to ordinary differential equations as nonlinear
  {B}ayesian filtering: a new perspective}, Stat. Comput., 29 (2019),
  pp.~1297--1315, \url{https://doi.org/10.1007/s11222-019-09900-1}.

\bibitem{WH20}
{\sc J.~Wenger and P.~Hennig}, {\em Probabilistic linear solvers for machine
  learning}, 2020, \url{https://arxiv.org/abs/2010.09691}.
\newblock arXiv:2010.09691.

\end{thebibliography}


\begin{thebibliography}{10}

\bibitem{AMAHM16}
{\sc T.~Y. Al-Naffouri, M.~Moinuddin, N.~Ajeeb, B.~Hassibi, and A.~L.
  Moustakas}, {\em On the distribution of indefinite quadratic forms in
  {Gaussian} random variables}, IEEE Trans. Commun., 64 (2016), pp.~153--165.

\bibitem{Calvetti:BCG}
{\sc D.~{Calvetti}}, {\em Contributed discussion for ``{A} {B}ayesian conjugate
  gradient method''}, Bayesian Anal., 14 (2019), pp.~937--1012,
  \url{https://doi.org/10.1214/19-BA1145}.

\bibitem{Cockayne:BCG}
{\sc J.~Cockayne, C.~J. Oates, I.~C.~F. Ipsen, and M.~Girolami}, {\em A
  {B}ayesian conjugate gradient method (with discussion)}, Bayesian Anal., 14
  (2019), pp.~937--1012, \url{https://doi.org/10.1214/19-BA1145}.
\newblock Includes 6 discussions and a rejoinder from the authors.

\bibitem{BCG:Rejoinder}
{\sc J.~Cockayne, C.~J. Oates, I.~C.~F. Ipsen, and M.~Girolami}, {\em Rejoinder
  for ``{A} {B}ayesian conjugate gradient method''}, Bayesian Anal., 14 (2019),
  pp.~937--1012, \url{https://doi.org/10.1214/19-BA1145}.

\bibitem{BCG:Supp}
{\sc J.~Cockayne, C.~J. Oates, I.~C.~F. Ipsen, and M.~Girolami}, {\em
  Supplementary material for `{A} {B}ayesian conjugate-gradient method'},
  Bayesian Anal.,  (2019), \url{https://doi.org/10.1214/19-BA1145SUPP}.

\bibitem{GreenbaumStrakos}
{\sc A.~Greenbaum and Z.~Strako\v{s}}, {\em Predicting the behavior of finite
  precision {L}anczos and conjugate gradient computations}, SIAM J. Matrix
  Anal. Appl., 13 (1992), pp.~121--137, \url{https://doi.org/10.1137/0613011}.

\bibitem{JenSol72}
{\sc D.~Jensen and H.~Solomon}, {\em A {Gaussian} approximation to the
  distribution of a definite quadratic form}, J. Amer. Statist. Assoc., 67
  (1972), pp.~898--902.

\bibitem{LiFang:BCG}
{\sc L.~{Li} and E.~X. {Fang}}, {\em Invited discussion for ``{A} {B}ayesian
  conjugate gradient method''}, Bayesian Anal., 14 (2019), pp.~937--1012,
  \url{https://doi.org/10.1214/19-BA1145}.

\bibitem{Liesen}
{\sc J.~Liesen and Z.~Strakos}, {\em Krylov Subspace Methods: Principles and
  Analysis}, Oxford University Press, 2013.

\bibitem{Mathai}
{\sc A.~M. Mathai and S.~B. Provost}, {\em Quadratic forms in random variables:
  {Theory} and applications}, Dekker, 1992.

\bibitem{Meurant}
{\sc G.~Meurant and Z.~Strako\v{s}}, {\em The {L}anczos and conjugate gradient
  algorithms in finite precision arithmetic}, Acta Numer., 15 (2006),
  pp.~471--542, \url{https://doi.org/10.1017/S096249290626001X}.

\bibitem{MeurantTichy13}
{\sc G.~Meurant and P.~Tich\'{y}}, {\em On computing quadrature-based bounds
  for the {$A$}-norm of the error in conjugate gradients}, Numer. Algorithms,
  62 (2013), pp.~163--191, \url{https://doi.org/10.1007/s11075-012-9591-9}.

\bibitem{MeurantTichy19}
{\sc G.~Meurant and P.~Tich\'{y}}, {\em Approximating the extreme {R}itz values
  and upper bounds for the {$A$}-norm of the error in {CG}}, Numer. Algorithms,
  82 (2019), pp.~937--968, \url{https://doi.org/10.1007/s11075-018-0634-8}.

\bibitem{Muirhead}
{\sc R.~J. Muirhead}, {\em Aspects of multivariate statistical theory}, John
  Wiley \& Sons, Inc., New York, 1982.
\newblock Wiley Series in Probability and Mathematical Statistics.

\bibitem{Ouellette}
{\sc D.~V. Ouellette}, {\em Schur complements and statistics}, Linear Algebra
  Appl., 36 (1981), pp.~187--295,
  \url{https://doi.org/10.1016/0024-3795(81)90232-9}.

\bibitem{Stewart:Haar}
{\sc G.~W. Stewart}, {\em The efficient generation of random orthogonal
  matrices with an application to condition estimators}, SIAM J. Numer. Anal.,
  17 (1980), pp.~403--409, \url{https://doi.org/10.1137/0717034}.

\bibitem{Strakos}
{\sc Z.~Strako\v{s}}, {\em On the real convergence rate of the conjugate
  gradient method}, Linear Algebra Appl., 154/156 (1991), pp.~535--549,
  \url{https://doi.org/10.1016/0024-3795(91)90393-B}.

\bibitem{StrakosTichy}
{\sc Z.~Strako\v{s} and P.~Tich\'{y}}, {\em On error estimation in the
  conjugate gradient method and why it works in finite precision computations},
  Electron. Trans. Numer. Anal., 13 (2002), pp.~56--80.

\bibitem{Stuart:BayesInverse}
{\sc A.~M. Stuart}, {\em Inverse problems: a {B}ayesian perspective}, Acta
  Numer., 19 (2010), pp.~451--559,
  \url{https://doi.org/10.1017/S0962492910000061}.

\bibitem{Tzi72}
{\sc G.~G. Tziritas}, {\em On the distribution of positive-definite {G}aussian
  quadratic forms}, IEEE Trans. Inform. Theory, 33 (1987), pp.~895--906.

\end{thebibliography}

\end{document}

% --- supplement: KNSupp.tex ---

\maketitle

\section{Outline of Supplementary Materials}
\label{S:SuppOutline}

We present the proof of \tref{T:XmSigmTheory}
(\sref{S:TheoremOneProof}),
 discuss more theoretical properties of BayesCG (\sref{S:MoreTheory}), and 
 examine the performance of the Krylov posterior as a CG error estimate (\sref{S:SStat}).

\section{Proof of \tref{T:XmSigmTheory}}
\label{S:TheoremOneProof}
We present an example of search directions that 
 satisfy the assumptions of \tref{T:XmSigmTheory}
 (Example~\ref{ex_SM2}); review the conjugacy
 and stability of Gaussian distributions
 (Lemmas \ref{L:GaussStability} and
 \ref{L:GaussConjugacy}); present
 the proof of \tref{T:XmSigmTheory}; and
  discuss the relation between the solution $\xvec_*$ and the random variable $\Xrv\sim\N(\xvec_0,\Sigmat_0)$
(Remark~\ref{R:DetRandom}).
 
\paragraph{Existence of search directions satisfying the assumptions of \tref{T:XmSigmTheory}}
The example below illustrates a
non-recursive way to select
search directions $\Smat_m$ so that $\Lammat_m = \Smat_m^T\Amat\Sigmat_0\Amat\Smat_m$
is nonsingular. The purpose of this example is to show that at for all $m\leq \rank(\Sigmat_0)$, least one set of search directions $\Smat_m$ exists that satisfies the assumptions of \tref{T:XmSigmTheory}.

\begin{example}\label{ex_SM2}
Let $\Sigmat_0=\Umat\Dmat\Umat^T$ be a singular value decomposition of the prior covariance $\Sigmat_0$,
and let $m \leq \rank(\Sigmat_0)$. Distinguish the
leading $m$ columns of $\Umat$, and the leading nonsingular
$m\times m$ 
principal submatrix of~$\Dmat$ 
\begin{equation*}
    \Umat_{1:m} \equiv \begin{bmatrix}
    \uvec_1 & \uvec_2 & \cdots & \uvec_m
    \end{bmatrix} \qquad \text{and} \qquad 
    \Dmat_{1:m} \equiv \diag\begin{pmatrix}d_1 &d_2 &\cdots& d_m\end{pmatrix},
\end{equation*}
%where $\uvec_i$ and $d_i$, $1\leq i \leq m$, are the columns and diagonal entries of $\Umat$ and $\Dmat$ respectively.
and define the search directions $\Smat_m \equiv \Amat^{-1}\Umat_m$. Then the equality
\begin{align*}
    \Lammat_m &= \Smat_m^T\Amat\Sigmat_0\Amat\Smat_m = \Umat_m^T\Amat^{-1}\Amat\Sigmat_0\Amat\Amat^{-1}\Umat_m = \Umat_m^T\Sigmat_0\Umat_m =\Dmat_m
\end{align*}
and $m\leq \rank(\Sigmat_m)$
imply that $\Dmat_m$, hence $\Lammat_m$, is nonsingular. 
\end{example}

This example shows that at least one set of search directions exists that satisfying the assumptions of \tref{T:XmSigmTheory}. This example is necessary because \tref{T:sRecursion} only shows that the recursively computed search directions from \tref{T:sRecursion2} satisfy the assumptions of \tref{T:XmSigmTheory} for $m\leq \kry \leq \rank(\Sigmat_0)$, where $\kry$ is the grade of $\rvec_0$ with respect to $\Amat\Sigmat_0\Amat$. In practice, it is best to compute the BayesCG posterior with the recursively computed search directions, even if $\kry < \rank(\Sigmat_m)$. There is no reason to compute more than $\kry$ of these search directions because they cause the posterior mean at $\kry$ iterations to be $\xvec_\kry = \xvec_*$.

\paragraph{Review of stability and conjugacy of Gaussian distributions}

The proof of \tref{T:XmSigmTheory} relies on the \textit{stability} and \textit{conjugacy} of Gaussian distributions.

\begin{lemma}[Stability of Gaussian distributions {\cite[Section 1.2]{Muirhead}}]
  \label{L:GaussStability}
Let $\Xrv\sim\N(\xvec,\Sigmat)\in\Rn$ be a Gaussian
  random variable
  with mean $\xvec\in\Rn$ and covariance $\Sigmat\in\Rnn$. If $\yvec\in\Rn$ is a vector and $\Fmat\in\Rnn$ is a matrix, then $\Zrv = \yvec+\Fmat\Xrv $ is again a Gaussian random variable
  distibuted as
  \begin{equation*}
    \Zrv \sim \N(\yvec+\Fmat\xvec, \Fmat \Sigmat \Fmat^T).
  \end{equation*}
\end{lemma}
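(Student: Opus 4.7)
The plan is to use characteristic functions, which give the cleanest proof of this lemma because they avoid any invertibility assumptions on $\Fmat$ or positive-definiteness assumptions on $\Sigmat$. Recall that the characteristic function of $\N(\xvec,\Sigmat)$ is $\tvec\mapsto \exp(i\tvec^T\xvec - \tfrac{1}{2}\tvec^T\Sigmat\tvec)$, and that a distribution on $\Rn$ is uniquely determined by its characteristic function; these two facts are all I need to reduce the proof to an algebraic identity.

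First, I would write out $\mathbb{E}[\exp(i\tvec^T\Zrv)]$, substitute $\Zrv=\yvec+\Fmat\Xrv$, and pull the deterministic factor $\exp(i\tvec^T\yvec)$ outside the expectation. The remaining expectation $\mathbb{E}[\exp(i(\Fmat^T\tvec)^T\Xrv)]$ is exactly the characteristic function of $\Xrv$ evaluated at the vector $\Fmat^T\tvec$. Substituting the known Gaussian form at this argument yields a linear part $i\tvec^T(\yvec+\Fmat\xvec)$ and a quadratic part $-\tfrac{1}{2}\tvec^T(\Fmat\Sigmat\Fmat^T)\tvec$. This matches the characteristic function of $\N(\yvec+\Fmat\xvec,\Fmat\Sigmat\Fmat^T)$, and uniqueness closes the argument.

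The main obstacle, such as it is, is conceptual rather than computational: one has to be comfortable with possibly degenerate Gaussians, since $\Fmat\Sigmat\Fmat^T$ can be singular even when $\Sigmat$ is positive definite (for example, if $\Fmat$ is rank deficient). This is precisely why I would avoid a density-based change-of-variables proof, which would force the additional hypothesis that $\Fmat$ is invertible. As an independent sanity check, linearity of expectation gives $\mathbb{E}[\Zrv]=\yvec+\Fmat\xvec$ and $\mathbb{E}[(\Zrv-\mathbb{E}[\Zrv])(\Zrv-\mathbb{E}[\Zrv])^T]=\Fmat\Sigmat\Fmat^T$ immediately, confirming that the mean and covariance of the resulting Gaussian match the claim.
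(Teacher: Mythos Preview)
Your characteristic-function argument is correct and is the standard proof of this fact; it handles singular $\Fmat$ and degenerate $\Sigmat$ without extra hypotheses, exactly as you note. The paper, however, does not prove this lemma at all: it is stated as a cited result from \cite[Section 1.2]{Muirhead} and then invoked in the proof of \tref{T:XmSigmTheory}, so there is no paper proof to compare against. Your write-up would serve perfectly well as a self-contained justification if one were desired.
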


\begin{lemma}[Conjugacy of Gaussian distributions {\cite[Section 6.1]{Ouellette},
\cite[Corollary 6.21]{Stuart:BayesInverse}}]
  \label{L:GaussConjugacy}
  Let $\Xrv\sim\N(\xvec,\Sigmat_x)$ and $\Yrv\sim\N(\yvec,\Sigmat_y)$
  be Gaussian random variables. The jointly Gaussian random variable $\begin{bmatrix}\Xrv^T & \Yrv^T\end{bmatrix}^T$  has the distribution
  \begin{equation*}
    \begin{bmatrix}
      \Xrv \\ \Yrv
    \end{bmatrix} \sim \N\left(\begin{bmatrix} \xvec\\ \yvec \end{bmatrix},\begin{bmatrix}\Sigmat_x & \Sigmat_{xy} \\ \Sigmat_{xy}^T & \Sigmat_y\end{bmatrix} \right),
  \end{equation*}
  where $\Sigmat_{xy} \equiv \Cov(\Xrv,\Yrv) = \Exp[(\Xrv-\xvec)(\Yrv-\yvec)^T]$ and the conditional distribution of $\Xrv$ given $\Yrv$ is
  \begin{equation*}
    (\Xrv \ | \ \Yrv) \sim \N(\overbrace{\xvec + \Sigmat_{xy}\Sigmat_y^\dagger(\Yrv - \yvec)}^{\text{mean}}, \ \overbrace{\Sigmat_x - \Sigmat_{xy}\Sigmat_y^\dagger\Sigmat_{xy}^T}^{\text{covariance}}).
  \end{equation*}
\end{lemma}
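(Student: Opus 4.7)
The plan is to separate the statement into the joint distribution claim and the conditional distribution claim, treating the former as essentially definitional and focusing the real work on the latter.

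For the joint distribution, once joint Gaussianity of $[\Xrv^T\ \Yrv^T]^T$ is assumed, the distribution is completely determined by its first two moments. Linearity of expectation gives the mean $[\xvec^T\ \yvec^T]^T$; the diagonal covariance blocks are $\Sigmat_x$ and $\Sigmat_y$ by the marginal hypotheses, and the off-diagonal block is $\Sigmat_{xy}$ directly from the definition $\Sigmat_{xy} = \Exp[(\Xrv-\xvec)(\Yrv-\yvec)^T]$ stated in the lemma. This reduces to a one-line verification.

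For the conditional distribution, my plan is to use the standard decorrelation trick. I would define
\begin{equation*}
\Zrv := \Xrv - \xvec - \Sigmat_{xy}\Sigmat_y^\dagger (\Yrv - \yvec),
\end{equation*}
so that $\Zrv$ is obtained from $[\Xrv^T\ \Yrv^T]^T$ by an affine map and therefore $(\Zrv, \Yrv)$ is jointly Gaussian by Lemma~\ref{L:GaussStability}. Direct computation then gives $\Exp[\Zrv] = 0$, $\Cov(\Zrv,\Yrv) = \Sigmat_{xy} - \Sigmat_{xy}\Sigmat_y^\dagger\Sigmat_y$, and, using the Moore--Penrose identity $\Sigmat_y^\dagger\Sigmat_y\Sigmat_y^\dagger = \Sigmat_y^\dagger$, also $\Cov(\Zrv,\Zrv) = \Sigmat_x - \Sigmat_{xy}\Sigmat_y^\dagger\Sigmat_{xy}^T$. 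Once the cross-covariance is shown to vanish, $\Zrv$ and $\Yrv$ are uncorrelated jointly Gaussian, hence independent, so conditioning on $\Yrv$ leaves the distribution of $\Zrv$ unchanged. Rewriting $\Xrv = \xvec + \Sigmat_{xy}\Sigmat_y^\dagger(\Yrv - \yvec) + \Zrv$ and reading off the mean and covariance then yields the stated conditional distribution.

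The main obstacle is justifying the identity $\Sigmat_{xy}\Sigmat_y^\dagger\Sigmat_y = \Sigmat_{xy}$, which is nontrivial precisely when $\Sigmat_y$ is singular and which is exactly what makes $\Cov(\Zrv,\Yrv)$ vanish. This is equivalent to the row space of $\Sigmat_{xy}$ being contained in the range of $\Sigmat_y$. I would argue this directly: any vector in $\ker(\Sigmat_y)$ corresponds to a linear functional of $\Yrv$ whose variance is zero, hence is almost surely constant, so its covariance with every component of $\Xrv$ vanishes; this forces that vector into $\ker(\Sigmat_{xy})$ as well. Taking transposes gives the required range inclusion, and the decorrelation argument then goes through to complete the proof.
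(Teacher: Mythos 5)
Your proposal is correct, but there is nothing in the paper to compare it against: \lref{L:GaussConjugacy} is stated without proof and imported from the cited references (Ouellette, Sec.~6.1; Stuart, Cor.~6.21), then used as a black box in the proof of \tref{T:XmSigmTheory}. What you have written is the standard decorrelation proof, and you have correctly identified the only genuinely delicate point in the pseudoinverse formulation, namely the identity $\Sigmat_{xy}\Sigmat_y^\dagger\Sigmat_y = \Sigmat_{xy}$; your argument for it (a vector $v\in\ker(\Sigmat_y)$ makes $v^T\Yrv$ almost surely constant, hence $\Sigmat_{xy}v=0$, so $\ker(\Sigmat_y)\subseteq\ker(\Sigmat_{xy})$, equivalently $\range(\Sigmat_{xy}^T)\subseteq\range(\Sigmat_y)$ since $\Sigmat_y$ is symmetric) is exactly right and is what makes the lemma valid for singular $\Sigmat_y$. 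Two small points you should make explicit if you write this out in full: first, the computation of $\Cov(\Zrv,\Zrv)$ uses that $\Sigmat_y^\dagger$ is itself symmetric and that $\Sigmat_y^\dagger\Sigmat_y$ equals $\Sigmat_y\Sigmat_y^\dagger$, both of which hold because $\Sigmat_y$ is symmetric positive semi-definite but deserve a line; second, as you note, joint Gaussianity of $[\Xrv^T\ \Yrv^T]^T$ must be taken as a hypothesis (it does not follow from the two marginal statements $\Xrv\sim\N(\xvec,\Sigmat_x)$ and $\Yrv\sim\N(\yvec,\Sigmat_y)$), which is consistent with how the lemma is phrased and with how it is applied in the paper, where $\Yrv_m=\Smat_m^T\Amat\Xrv_0$ is an affine image of $\Xrv_0$ and joint Gaussianity follows from \lref{L:GaussStability}.
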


\begin{proof}[Proof of \tref{T:XmSigmTheory}]
Since $m\leq \rank(\Sigmat_0)$,
 we can choose search directions
 $\Smat_m$ with linearly independent
 columns so that
 $\Lammat_m$ is nonsingular,
 see Example~\ref{ex_SM2}.
Then the proof reduces to that of \cite[Proof of Proposition 1]{BCG:Supp}.

Let the random variable $\Xrv_0\sim\N(\xvec_0,\Sigmat_0)$ represent 
the prior belief about the unknown solution $\xvec_*$, and let the random variable
  $\Yrv_m\equiv\Smat_m^T\Amat\Xrv_0$ represent the implied prior belief
  about the unknown values $\Smat_m^T\Amat\xvec_*$ before they are computed.
The posterior is the conditional distribution  $(\Xrv_0 \ | \ \Yrv_m=\Smat_m^T\Amat\xvec_*)$
 \cite[Proposition 1]{Cockayne:BCG}. 
Thus, we first determine the conditional distribution $(\Xrv_0 \ | \ \Yrv_m)$
and  then substitute $\Yrv_m = \Smat_m^T\Amat\xvec_*$.

%According to \lref{L:GaussConjugacy}  
The joint distribution of $\Xrv_0$ and $\Yrv_m$ is
  \begin{align}
    \label{Eq:Joint}
    \begin{bmatrix}\Xrv_0 \\ \Yrv_m
    \end{bmatrix}
    &\sim \N\left(\begin{bmatrix}\xvec_0 \\ \Exp[\Yrv_m] 
      \end{bmatrix},\  \begin{bmatrix}\Sigmat_0 & \Cov(\Xrv_0,\Yrv_m) \\
        \Cov(\Xrv_0,\Yrv_m)^T & \Cov(\Yrv_m,\Yrv_m)
      \end{bmatrix}\right).
  \end{align}
The mean and covariance of $\Yrv_m$
follow from  \lref{L:GaussStability},
  \begin{equation*}
    \Exp[\Yrv_m]  = \Smat_m^T\Amat\xvec_0 \qquad \text{and} \qquad \Cov(\Yrv_m,\Yrv_m) =  \Smat_m^T\Amat\Sigmat_0\Amat\Smat_m = \Lammat_m,
  \end{equation*}
  while the linearity of the expectation
  implies for the covariance that
\begin{align*}
    \Cov(\Xrv_0,\Yrv_m) &= \Exp[(\Xrv_0-\xvec_0)(\Smat_m^T\Amat(\Xrv_0-\xvec_0))^T] =  \Sigmat_0 \Amat \Smat_m.
  \end{align*}
 Substituting all of the above into \eref{Eq:Joint} gives
 \begin{align*}
    \begin{bmatrix}\Xrv_0 \\ \Yrv_m
    \end{bmatrix}
    &\sim \N\left(\begin{bmatrix}\xvec_0 \\ \Smat_m^T\Amat\xvec_0
      \end{bmatrix},\  \begin{bmatrix}\Sigmat_0 &\Sigmat_0 \Amat \Smat_m \\
        (\Sigmat_0 \Amat \Smat_m)^T & \Lammat_m
      \end{bmatrix}\right).
  \end{align*}
 
 Thus we can invoke
 \lref{L:GaussConjugacy} 
 to conclude that the conditional distribution for $(\Xrv_0 \ | \ \Yrv_m)$
 is a Gaussian $\N(\xvec_m,\Sigmat_m)$ with mean and covariance
  \begin{align*}
    \xvec_m &= \xvec_0 + \Sigmat_0\Amat\Smat_m\Lammat_m^{-1}(\Yrv_m -\Smat_m^T\Amat\xvec_0) \\
    \Sigmat_m &= \Sigmat_0 - \Sigmat_0\Amat\Smat_m\Lammat_m^{-1}\Smat_m^T\Amat\Sigmat_0.
  \end{align*}
At last, substitute $\Yrv_m = \Smat_m^T\Amat\xvec_* = \Smat_m^T\bvec$ to obtain
$(\Xrv_0 \ | \ \Yrv_m=\Smat_m^T\Amat\xvec_*)$.
\end{proof}

Below we discuss the relation between the solution vector $\xvec_*$ and the random variable $\Xrv$ from the proof of \tref{T:XmSigmTheory}

\begin{remark}
  \label{R:DetRandom}
  The solution vector $\xvec_*$ is a deterministic value, but we do not know its true value. The prior distribution $\N(\xvec_0,\Sigmat_0)$ models the initial epistemic uncertainty in $\xvec_*$, that is, the uncertainty in our knowledge of the true value of $\xvec_*$. The random variable $\Xrv\sim\N(\xvec_0,\Sigmat_0)$ in the proof of \tref{T:XmSigmTheory} is a surrogate for $\xvec_*$.

  When we compute $\xvec_*$ with an iterative linear solver, we gain more information about the true value of $\xvec_*$. Since we gain information about $\xvec_*$, we can update the surrogate for $\xvec_*$ by conditioning $\Xrv$ on the new information. In BayesCG, the information we gain is that $\Yrv \equiv \Smat_m^T\Amat\Xrv$ takes the value $\Smat_m^T\bvec$. Therefore, our updated surrogate is $\Xrv \ | \ \Yrv = \Smat_m^T\bvec$, and it is distributed according to the posterior distribution $\N(\xvec_m,\Sigmat_m)$. The posterior distribution models the uncertainty remaining $\xvec_*$ after we obtained the additional information about it.
\end{remark}

\section{Additional Theoretical Properties of BayesCG}
\label{S:MoreTheory}

We discuss the relationship between BayesCG and CG (\sref{S:CGBayesCG}), present an alternative proof of \tref{T:KrylovPosterior} (\sref{S:KrylovAlt}), and present an alternative definition of $\Phimat$ that has the same convergence properties as in \sref{S:Phi} (\sref{S:PhiAlt}).

\subsection{Relationship Between BayesCG and CG}
\label{S:CGBayesCG}

We discuss the relationship between BayesCG and CG.

The posterior mean from \aref{A:BayesCG} is closely related to the approximate solution from CG. For \emph{nonsingular} $\Sigmat_0$, BayesCG can be interpreted as CG applied to a right-preconditioned linear system. Specifically, \cite{LiFang:BCG} showed the posterior means $\xvec_i$ in \aref{A:BayesCG} are equal to the iterates of \aref{A:CG} applied to the right preconditioned system
\begin{equation}
  \label{Eq:LiFang}
  \Amat\,(\Sigmat_0\Amat) \wvec_* = \bvec \quad \text{where} \quad \wvec_* = \left(\Sigmat_0\Amat\right)^{-1}\xvec_*.
\end{equation}
It can be seen in \eref{Eq:LiFang} that if $\Sigmat_0 = \Amat^{-1}$, then the BayesCG posterior mean is equal to the approximate solution computed by CG. This was originally shown in \cite[Section 2.3]{Cockayne:BCG} and can also be seen by comparing Algorithms~\ref{A:BayesCG} and~\ref{A:CG}. Additionally, if $\Sigmat_0 = \Amat^{-1}$, then the search directions in Algorithms~\ref{A:BayesCG} and~\ref{A:CG} are equal as well.

Similarly to CG, the termination criterion in \aref{A:BayesCG} can be the usual relative residual norm, or it can be statistically motivated \cite[Section 2]{Calvetti:BCG}, \cite[Section 1.3]{BCG:Rejoinder}.

The similarity of BayesCG (\aref{A:BayesCG}) and CG (\aref{A:CG}) strongly suggests that both algorithms have similar finite precision behavior. %Numerical experiments in \cite[Section 6.1]{Cockayne:BCG} and the Supplement of this paper verify that the finite precision behavior of \aref{A:BayesCG} resembles that of \aref{A:CG}.
The search directions in \aref{A:BayesCG} lose orthogonality through the course of the iteration, thereby slowing down the convergence of the posterior means \cite[Section 6.1]{Cockayne:BCG}, similar to what happens in CG \cite[Section 5.8]{Liesen}, \cite[Section 5]{Meurant}. In addition, loss of orthogonality causes loss of semi-definiteness in the posterior covariances $\Sigmat_m$,  prohibiting their interpretation as covariance matrices since covariance matrices must be positive semi-definite  \cite[Section 6.1]{Cockayne:BCG}. The remedy recommended in \cite[Section 6.1]{Cockayne:BCG} is reorthogonalization of the search directions.

\subsection{Alternative Version of \tref{T:KrylovPosterior}}
\label{S:KrylovAlt}

We present an alternative version of \tref{T:KrylovPosterior}, the theorem that shows the Krylov posterior means are equal to CG iterates. This version additionally shows the search directions computed in \aref{A:BayesCG} under the Krylov prior are equal to the search directions in \aref{A:CG}.

The alternative version of \tref{T:KrylovPosterior} also verifies the claim in \rref{R:ApproxAlt} that the approximate Krylov posterior \eref{e_dpost} can be viewed as as the posterior from the rank-$(m+d)$, $1\leq d \leq \kry-m$, approximation of the prior $\N(\xvec_0,\widehat\Gammat_0)$ with
\begin{equation}
  \label{Eq:KrylovPriorApprox}
  \widehat\Gammat_0 = \Vmat_{1:m+d}\Phimat_{1:m+d}(\Vmat_{1:m+d})^T.
\end{equation}

Similarly to \tref{T:KrylovPosterior}, the alternative version of the theorem relies on \eref{Eq:KrylovEig}. Equation \eref{Eq:KrylovEig} remains true for the approximate posterior:
\begin{equation} \label{Eq:KrylovEigApprox}
  \widehat \Gammat_0\Amat\tilde{\vvec}_i=\phi_i\tilde{\vvec}_i, \qquad 1\leq i\leq m+d.
\end{equation}

\begin{theorem}
  \label{T:TwoPriorsEqual}
  Let $\svec_i$ and $\xvec_i$, $1\leq i \leq m$ be the search directions and posterior means computed in $m$ iterations of \aref{A:BayesCG} starting from the prior $\N(\xvec_0,\widehat \Gammat_0)$.
  Similarly, let $\vvec_i$ and $\zvec_i$, $1\leq i \leq m$ be the search directions and solution iterates computed in $m$ iterations of \aref{A:CG} staring at initial guess $\zvec_0$. If $\zvec_0 = \xvec_0$, then
  \begin{equation}
    \label{Eq:T:TwoPriors:pq}
    \svec_i = \vvec_i \quad \text{and} \quad \xvec_i = \zvec_i,  \qquad 1\leq i \leq m.
  \end{equation}
\end{theorem}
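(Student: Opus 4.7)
The plan is to establish the two identities in \eref{Eq:T:TwoPriors:pq} simultaneously by induction on $i$. The driving mechanism is the eigenvalue relation \eref{Eq:KrylovEigApprox}, which says that $\widehat\Gammat_0\Amat$ is diagonal in the basis $\{\tilde\vvec_1,\dots,\tilde\vvec_{m+d}\}$ with eigenvalues $\phi_i$. Since the $\tilde\vvec_i$ are, up to normalization, the genuine CG search directions generated from the initial guess $\xvec_0$, applying $\widehat\Gammat_0$ to any CG direction returns a scalar multiple of the same direction, and the scalars are absorbed into the BayesCG step sizes.

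For the base case $i=1$, the assumption $\xvec_0=\zvec_0$ yields equal initial residuals $\rvec_0 = \bvec - \Amat\xvec_0$. The first CG direction is $\vvec_1 = \rvec_0$, while the first BayesCG search direction from \aref{A:BayesCG} is, up to normalization, $\widehat\Gammat_0\rvec_0$. Writing $\rvec_0 = c\,\tilde\vvec_1$ for some scalar $c$ and invoking \eref{Eq:KrylovEigApprox} shows that $\svec_1$ is a scalar multiple of $\tilde\vvec_1$, hence of $\vvec_1$. The matching BayesCG step size then produces the CG iterate $\zvec_1$, giving $\xvec_1 = \zvec_1$.

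For the inductive step, equality of iterates and search directions up to step $i$ forces equality of residuals, $\rvec_i^{\mathrm{BCG}} = \rvec_i^{\mathrm{CG}}$. The BayesCG update for $\svec_{i+1}$ combines $\widehat\Gammat_0\rvec_i$ with a $\widehat\Gammat_0\Amat$-conjugate correction of $\svec_i = \vvec_i$. Using the diagonalization provided by \eref{Eq:KrylovEigApprox}, the $\widehat\Gammat_0\Amat$-conjugacy condition on the CG directions is equivalent, up to the scalars $\phi_j$, to the standard $\Amat$-conjugacy condition of CG. Since $\rvec_i$ lies in the span of $\tilde\vvec_1,\dots,\tilde\vvec_{i+1}$ by the usual Krylov subspace identity, the BayesCG recursion for $\svec_{i+1}$ therefore collapses to the CG recursion for $\vvec_{i+1}$, and the associated step size produces $\xvec_{i+1}=\zvec_{i+1}$.

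The main obstacle is the careful bookkeeping of the eigenvalues $\phi_i$ as they propagate through the coefficient and step size formulas in \aref{A:BayesCG}. These scalars appear in both the numerator and the denominator of the BayesCG updates, and one must verify that every $\phi_i$ introduced by a factor of $\widehat\Gammat_0$ is consistently cancelled so that the resulting search direction and iterate updates match CG exactly, and not merely up to scalar multiples. Once this cancellation is verified, \eref{Eq:T:TwoPriors:pq} follows, and this also justifies the interpretation from \rref{R:ApproxAlt} of the approximate Krylov posterior \eref{e_dpost} as the BayesCG posterior obtained from the rank-$(m+d)$ prior $\N(\xvec_0,\widehat\Gammat_0)$.
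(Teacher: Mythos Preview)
Your proposal misreads the recursions in Algorithm~\ref{A:BayesCG}. In that algorithm the search-direction recursion is the \emph{same} as in CG: the initial direction is $\svec_1=\rvec_0$, and $\svec_{i+1}=\rvec_i+\frac{\|\rvec_i\|_2^2}{\|\rvec_{i-1}\|_2^2}\,\svec_i$. The prior covariance $\widehat\Gammat_0$ enters only through the iterate update
\[
\xvec_i=\xvec_{i-1}+\frac{\rvec_{i-1}^T\rvec_{i-1}}{\svec_i^T\Amat\widehat\Gammat_0\Amat\svec_i}\,\widehat\Gammat_0\Amat\svec_i.
\]
Consequently the base case is not ``$\svec_1$ is, up to normalization, $\widehat\Gammat_0\rvec_0$''; it is simply $\svec_1=\rvec_0=\vvec_1$. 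Likewise, the search-direction induction step requires no $\widehat\Gammat_0\Amat$-conjugacy bookkeeping: once $\rvec_i=\qvec_i$ and $\svec_i=\vvec_i$, the two recursions are literally identical and give $\svec_{i+1}=\vvec_{i+1}$ immediately. The only place the eigenvalue relation \eref{Eq:KrylovEigApprox} is needed is in the iterate update, where $\widehat\Gammat_0\Amat\svec_i=\phi_i\svec_i$ produces a single $\phi_i/\phi_i$ cancellation and collapses the BayesCG step to the CG step $\xvec_i=\xvec_{i-1}+\frac{\|\rvec_{i-1}\|_2^2}{\vvec_i^T\Amat\vvec_i}\vvec_i$. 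This is exactly how the paper argues.

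There is also a genuine gap in your base case even under your (incorrect) reading of the algorithm. You write $\rvec_0=c\,\tilde\vvec_1$ and then ``invoke \eref{Eq:KrylovEigApprox}'' to conclude $\widehat\Gammat_0\rvec_0$ is a scalar multiple of $\tilde\vvec_1$. But \eref{Eq:KrylovEigApprox} says $\widehat\Gammat_0\Amat\tilde\vvec_1=\phi_1\tilde\vvec_1$, not $\widehat\Gammat_0\tilde\vvec_1=\phi_1\tilde\vvec_1$. Since the columns of $\Vmat_{1:m+d}$ are $\Amat$-orthonormal rather than Euclidean-orthonormal, $\widehat\Gammat_0\tilde\vvec_1=\Vmat_{1:m+d}\Phimat_{1:m+d}\Vmat_{1:m+d}^T\tilde\vvec_1$ is in general \emph{not} parallel to $\tilde\vvec_1$, so the step does not go through.
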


\begin{proof}
  We give an induction proof to establish the equality of iterates and search directions. In this proof we denote by
  \begin{equation*}
    \qvec_i = \bvec - \Amat\zvec_i, \qquad 0 \leq i \leq m,
  \end{equation*}
  the residuals in \aref{A:CG}.
  
Induction base:
The equality of the initial iterates follows from the assumption that $\zvec_0=\xvec_0$. 
This, in turn, implies the equality of the corresponding residuals and search directions,
\begin{equation*}
\svec_1 = \rvec_0 = \bvec - \Amat\xvec_0 = \bvec-\Amat\zvec_0 = \qvec_0=\vvec_1.
\end{equation*}

Induction hypothesis: Assume equality of the first $m$ search directions and iterates, 
\begin{equation}
\label{Eq:pmISqm}
\xvec_{i} = \zvec_{i}, \quad 0\leq i \leq m-1, \qquad \text{and}\qquad 
\svec_i = \vvec_i, \quad 1\leq i\leq m.
\end{equation}
The equality of the iterates implies the equality of the residuals
\begin{equation}
  \label{Eq:rISq}
  \rvec_i = \bvec -\Amat \xvec_i=\bvec -\Amat \zvec_i= \qvec_i, \qquad 0\leq i \leq  m-1.
\end{equation}

Induction step:
Show $\xvec_m=\zvec_m$ and $\svec_{m+1}=\vvec_{m+1}$ via the recursions
from Algorithms \ref{A:BayesCG} and \ref{A:CG}.
\paragraph{Iterates} Apply  $\zvec_{m-1}=\xvec_{m-1}$ from (\ref{Eq:pmISqm}) and 
$\qvec_{m-1}=\rvec_{m-1}$ from (\ref{Eq:rISq}) to the iterate from Algorithm~\ref{A:CG},
  \begin{equation*}
 \zvec_{m} =  \zvec_{m-1} + \frac{\qvec_{m-1}^T\qvec_{m-1}}{\vvec_{m}^T\Amat\vvec_m}\vvec_{m}=
 \xvec_{m-1} + \frac{\rvec_{m-1}^T\rvec_{m-1}}{\vvec_{m}^T\Amat\vvec_m}\vvec_{m}.
\end{equation*}
Apply $\svec_m=\vvec_m$ from (\ref{Eq:pmISqm}) the iterate from Algorithm~\ref{A:BayesCG}
and simplify with \eref{Eq:KrylovEigApprox},
 \begin{equation*}
  \xvec_{m} = \xvec_{m-1} + \frac{\rvec_{m-1}^T\rvec_{m-1}}{\svec_m^T\Amat\Gammat_0\Amat\svec_m}\Gammat_0\Amat\svec_{m}=
  \xvec_{m-1} + \frac{\phi_m}{\phi_m}\frac{\rvec_{m-1}^T\rvec_{m-1}}{\vvec_m^T\Amat\vvec_m}\vvec_{m} = \zvec_m,
\end{equation*}
which proves the equality of the iterates, and implies equality of the residuals $\rvec_m=\qvec_m$.

\paragraph{Search Directions}
Apply $\svec_{m}=\vvec_{m}$ from (\ref{Eq:pmISqm}), and $\rvec_{m}=\qvec_{m}$  to the search direction from Algorithm~\ref{A:CG},
\begin{equation*}
  \svec_{m+1} = \rvec_{m} + \frac{\rvec_{m}^T\rvec_{m}}{\rvec_{m-1}^T\rvec_{m-1}}\svec_{m}
  = \qvec_{m} +  \frac{\qvec_{m}^T\qvec_{m}}{\qvec_{m-1}^T\qvec_{m-1}}\vvec_{m} = \vvec_{m+1},
\end{equation*}
which proves the equality of the search directions.
\end{proof}

Showing that the posterior covariance under the approximate Krylov prior is
\begin{equation*}
  \widehat\Gammat_m = \Vmat_{m+1:m+d} \Phimat_{m+1:m+d} (\Vmat_{m+1:m+d})^T
\end{equation*}
follows the same argument as in \tref{T:KrylovPosterior}.

\tref{T:TwoPriorsEqual} shows that the search directions under the approximate Krylov prior are not in $\ker(\widehat\Gammat_0\Amat)$. This is important to show because the approximate Krylov posterior does not satisfy the condition $\xvec_*-\xvec_0\in\range(\widehat\Gammat_0)$ from \tref{T:sRecursion} which guarantees $\svec_i\not\in\ker(\widehat\Gammat_0\Amat)$.

\subsection{Alternative Definition of $\Phimat$}
\label{S:PhiAlt}

In the following theorem, we discuss a definition of $\Phimat$ that is equivalent to the definition in \tref{Eq:PhiDef}.

\begin{theorem}
  \label{R:Phi}
The diagonal elements of $\Phimat$ in \tref{Eq:PhiDef} are equal to
\begin{equation}
  \label{Eq:PhiNoAlg}
  \phi_i = (\tilde{\vvec}_i^T\rvec_0)^2 = (\tilde\vvec_i^T\Amat(\xvec_*-\xvec_0))^2, \qquad 1\leq i \leq K.
\end{equation}
\end{theorem}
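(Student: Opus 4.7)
The plan is to establish the two equalities in \eref{Eq:PhiNoAlg} separately. The second equality is the easier of the two: since $\rvec_0 = \bvec - \Amat\xvec_0 = \Amat(\xvec_* - \xvec_0)$, substitution immediately gives $\tilde{\vvec}_i^T\rvec_0 = \tilde{\vvec}_i^T\Amat(\xvec_* - \xvec_0)$, so squaring yields the identity. This requires no new machinery and can be dispatched in a single line of the proof.

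For the first equality, I would start from the original definition of $\phi_i$ in \tref{Eq:PhiDef}, which expresses $\phi_i$ in terms of algorithmic quantities from the CG/BayesCG iteration (most naturally, a ratio involving $\rvec_{i-1}^T\rvec_{i-1}$ and $\svec_i^T\Amat\svec_i$, matching the CG step size $\alpha_i$ times $\rvec_{i-1}^T\rvec_{i-1}$). The goal is then to rewrite this combination as the squared inner product $(\tilde{\vvec}_i^T\rvec_0)^2$.

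The key step is to use the link between the Krylov eigenvectors $\tilde{\vvec}_i$ of \eref{Eq:KrylovEig} and the CG search directions $\vvec_i$ made explicit by \tref{T:TwoPriorsEqual}: each $\tilde{\vvec}_i$ is proportional to $\vvec_i$, with the normalization fixed by the eigenvalue equation $\Gammat_0\Amat\tilde{\vvec}_i = \phi_i\tilde{\vvec}_i$ and the spectral decomposition of $\Gammat_0$ in terms of the $\tilde{\vvec}_i$ and $\phi_i$. With this identification, the inner product $\tilde{\vvec}_i^T\rvec_0$ simplifies using the standard CG identities $\vvec_i^T\rvec_0 = \vvec_i^T\rvec_{i-1} = \rvec_{i-1}^T\rvec_{i-1}$, which follow from the residual recursion $\rvec_j = \rvec_{j-1} - \alpha_j\Amat\vvec_j$ together with conjugacy $\vvec_i^T\Amat\vvec_j = 0$ for $i\neq j$.

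I expect the main obstacle to be pinning down the exact scaling between $\tilde{\vvec}_i$ and $\vvec_i$, because the $\phi_i$ themselves appear in the construction of $\Gammat_0$ and hence in the normalization of $\tilde{\vvec}_i$. The clean way is to derive this scaling once from \eref{Eq:KrylovEig} and the spectral form of $\Gammat_0$, substitute into $\tilde{\vvec}_i^T\rvec_0$, invoke the CG identities above, and verify that the resulting expression collapses to the algorithmic definition of $\phi_i$ given in \tref{Eq:PhiDef}. Everything else is bookkeeping.
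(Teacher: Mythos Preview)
Your plan matches the paper's proof: start from $\phi_i = \gamma_i\|\rvec_{i-1}\|_2^2$, substitute $\gamma_i = \|\rvec_{i-1}\|_2^2/(\vvec_i^T\Amat\vvec_i)$, rewrite the numerator via the CG identity $\vvec_i^T\rvec_{i-1} = \|\rvec_{i-1}\|_2^2$, and then use $\tilde{\vvec}_i^T\rvec_{i-1} = \tilde{\vvec}_i^T\rvec_0$ together with $\rvec_0 = \Amat(\xvec_*-\xvec_0)$. The one place you make it harder than necessary is the scaling between $\tilde{\vvec}_i$ and $\vvec_i$: this is not something to extract from \eref{Eq:KrylovEig} (eigenvectors carry no intrinsic normalization, so the circularity you anticipate never arises), but is simply the defining $\Amat$-orthonormalization $\tilde{\vvec}_i = \vvec_i/\sqrt{\vvec_i^T\Amat\vvec_i}$, after which the computation is a one-line substitution.
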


\begin{proof}
  From \tref{Eq:PhiDef}, we have that $\phi_i = \gamma_i \|\rvec_{i-1}\|_2^2$, $1\leq i \leq \kry$. Substituting $\gamma_i = \rvec_{i-1}^T\rvec_{i-1} /\left(\vvec_i^T\Amat\vvec_i\right)$ from \aref{A:BayesCGWithoutBayesCG} into $\phi_i$ results in
  \begin{equation*}
    \phi_i = \frac{\|\rvec_{i-1}\|_2^4}{\vvec_i^T\Amat\vvec_i}, \qquad 1\leq i \leq \kry.
  \end{equation*}
  From the previous equation and $\vvec_i^T\rvec_{i-1} = \|\rvec_{i-1}\|_2^2$, $1\leq i \leq \kry$, \cite[(2.5.37)] {Liesen} follows
  \begin{equation*}
    \phi_i = \frac{(\vvec_i^T\rvec_{i-1})^2}{\vvec_i^T\Amat\vvec_i}, \qquad 1 \leq i \leq \kry.
  \end{equation*}
  Applying the normalization $\tilde\vvec_i = \vvec_i / \sqrt{\vvec_i^T\Amat\vvec_i}$ and the fact $\tilde\vvec_i^T\rvec_{i-1} = \tilde\vvec_i^T\rvec_0$, $1\leq i \leq \kry$, \cite[(11)]{Cockayne:BCG} to the previous equation gives
  \begin{align*}
    \phi_i = (\tilde\vvec_i^T\rvec_{i-1})^2 = (\tilde\vvec_i^T\rvec_0)^2, \qquad 1\leq i \leq \kry.
  \end{align*}
\end{proof}

Equation \eref{Eq:PhiNoAlg} provides a geometric interpretation of $\Phimat$. It shows that $\phi_i$ is the squared $\Amat$-norm length of error $\xvec_*-\xvec_0$ in the direction $\tilde{\vvec_i}$, $1\leq i \leq \kry$.

In finite precision,  the definition of $\Phimat$ in \tref{Eq:PhiDef} and \aref{A:BayesCGWithoutBayesCG} is preferable over \eref{Eq:PhiNoAlg}.
This is  because \eref{Eq:HSError} in \tref{Eq:PhiDef} requires only {\rm local} orthogonality of CG \cite[Section 10]{StrakosTichy},
while \eref{Eq:PhiNoAlg} requires {\rm global} orthogonality due to its reliance on the equalities
$\vvec_i^T\rvec_{i-1} = \cdots =\vvec_i^T\rvec_0$.

\section{Error Estimation and the Krylov Posterior}
\label{S:SStat}

We investigate performance of estimating the error in CG by sampling from the Krylov posterior distribution. We do this with the sampling based error estimate
\begin{equation}
  \label{Eq:SStat}
  \Srv \equiv \|\Xrv - \xvec_m\|_\Amat^2, \quad \Xrv \sim \N(\xvec_0,\widehat\Gammat_0),
\end{equation}
introduced in \sref{S:Phi}. Additionally, in \sref{S:Credible} we develop a $\alpha$\% credible interval of \eref{Eq:SStat} that can be computed without sampling. In \sref{S:SuppExperiments}, we compare the performance of \sref{Eq:SStat} and its analytic credible interval to two existing CG error estimation techniques.

\begin{remark}
Even though we are estimating CG error in this section, we remind the reader that the purpose of \eref{Eq:SStat} in sections~\ref{S:Phi} and~\ref{S:Experiments} in the main part of paper is not to estimate the error, it is to determine if the posterior is informative.
\end{remark}

\subsection{Credible Interval of Sampling Based Error Estimate}
\label{S:Credible}

The exact distribution of the sampling based error estimate \eref{Eq:SStat} is a generalized chi-squared distribution and does not have a known closed form. We present an approximation 
that avoids the cost of sampling without losing accuracy.
Compared to the many existing approximations \cite{AMAHM16,JenSol72,Tzi72} for distributions of 
Gaussian quadratic forms,
our approximation is simple and designed to be computable within CG. 

First we approximate \eref{Eq:SStat} by a Gaussian distribution $\N(\mu,\sigma^2)$. Because \eref{Eq:SStat} is a quadratic form, we can compute its mean and variance \cite[Sections 3.2b.1--3.2b.3]{Mathai} (see also \lref{L:QuadExp}). From \lref{L:SExp}, \tref{Eq:PhiDef}, and \eref{Eq:StrakosTichy} follows that
\begin{equation}
  \label{Eq:SStatMean}
  \mu \equiv \trace(\Amat\widehat\Gammat_m) = \sum_{i=m+1}^{m+d}\gamma_i\|\rvec_{i-1}\|_2^2 \approx \|\xvec_*-\xvec_m\|.
\end{equation}
Following a similar argument with the variance formula in \lref{L:QuadExp} gives
\begin{equation*}
\sigma^2\equiv  2\ \trace((\Amat\widehat\Gammat_m)^2) = 2\sum_{i=m+1}^{m+d} \gamma_i^2 \|\rvec_{i-1}\|_2^4.
\end{equation*}

Next we determine an `$\alpha$\% credible interval' of $\N(\mu,\sigma^2)$ for some $0 < \alpha < 100$.
A \textit{credible interval} is a band around the mean~$\mu$ whose width is a multiple of the standard deviation~$\sigma$.
Since $\mu$ is an underestimate of the error, we only need the upper one-sided upper credible interval
$[\mu, \Srv(\alpha)]$ where 
\begin{equation}
  \label{Eq:SD}
 \Srv(\alpha) \equiv \mu +h(\alpha)\,\sigma \qquad \text{and} \quad h(\alpha) \equiv  \sqrt{2} \erf^{-1}(\alpha/100). 
\end{equation}  
The \textit{error function} $\erf$ is associated with the integral over the probability density of the normal distribution,   
and $\erf^{-1}$ is its inverse\footnote{The function \texttt{erfinv} is implemented in Matlab, Python's \texttt{scipy.special} 
library,  and Julia's SpecialFunctions package.},
that is $\erf^{-1}(\erf(z))=z$.

The one-sided credible interval $[\mu, S_{\alpha}]$ becomes wider for large $\alpha$, and narrower for small $\alpha$.
In section~\ref{S:SuppExperiments} we select the popular choice $\alpha=95$,
and illustrate that $[\mu, S(95)]$ represents an estimate whose quality 
is comparable to \eref{Eq:SStat}.

\subsection{Numerical Experiments}
\label{S:SuppExperiments}
We perform numerical experiments to illustrate the accuracy of credible interval bound $S(95)$ by comparing it
to the mean and samples of the sampling based error estimate \eref{Eq:SStat}, an empirical version of the credible interval,
and state-of-the-art CG error estimators from \cite{MeurantTichy13,MeurantTichy19}. 
After describing the setup for the numerical experiments, we present results
for matrices with small dimension and large dimension, followed by a summary.

\subsubsection{Setup for the Numerical Experiments}\label{s_setup}
We describe the setup for the numerical experiments.
These estimates are plotted in each iteration $m$, but we suppress the explicit dependence on $m$ to keep 
the notation simple.\footnote{The Python code used in the numerical experiments can be found at \url{https://github.com/treid5/ProbNumCG_Supp}}

\paragraph{One-sided Credible Interval}
We plot the upper 95\% one-sided credible interval. This interval is the band between the mean $\mu$ from \tref{Eq:SStatMean} and bound $S(95)$ from \eref{Eq:SD} with $\sqrt{2}\erf(.95) = 1.96$,
\begin{equation}\label{Eq:Conf}
\mu =\sum_{i=m+1}^{m+d}\gamma_i \|\rvec_i\|_2^2 \quad \text{and} \quad S(95)  = \mu+ 1.96\ \sqrt{2\sum_{i=m+1}^{m+d}{\gamma_i^2\|\rvec_{i-1}\|_2^4}}.
\end{equation}
While $\mu$ represents the known underestimate \eref{Eq:StrakosTichy}, we are not aware of other estimates  
of the type~$S(95)$. As mentioned in \rref{R:CGError}, the mean $\mu$ is equal to the CG error estimate derived from Gaussian quadrature \cite[Section 3]{StrakosTichy}.
 
We also plot empirically computed credible interval $[\hat{\mu}, \hat S(95)]$ with bounds from the 10 samples of \eref{Eq:SStat}, where
\begin{align}\label{e_hat}
\hat{\mu}= \tfrac{1}{10} \sum_{i=1}^{10}{s_{i}} \qquad \text{and}\qquad
\hat{S}_{95}  = \hat{\mu}+ 1.96 \sqrt{\tfrac{1}{9} \sum_{i=1}^{10}{(s_i-\hat{\mu})^2}}.
\end{align}

\paragraph{Gauss-Radau Estimates}
We employ two different estimates.
\begin{description}
\item[(a)\ ] Gauss-Radau Upper bound  \cite[Section 4]{MeurantTichy13}.\\
This is an upper bound on CG error computed with the CGQ algorithm \cite[Section 4]{MeurantTichy13}.
It requires a user-specified lower bound on the smallest eigenvalue of $\Amat$.
\item[(b)\ ] Gauss-Radau Approximation \cite[Sections 6 and 8.2]{MeurantTichy19}.\\
This is an approximation of the Gauss-Radau upper bound (a) and it can underestimate the error \cite[Section 8.2]{MeurantTichy19}.
It does not require a bound for the smallest eigenvalue of $\Amat$, and instead approximates
the smallest Ritz value of the tridiagonal matrix in CG \cite[Section 5]{MeurantTichy19}. 
\end{description}
Both error estimates require running $d$ additional CG iterations to be computed. The additional amount of iterations is called the \emph{delay} and is analogous to the rank of the approximate Krylov posterior covariance matrix. The Gauss-Radau approximation~(b) does not require a delay, however we use a delay by computing the estimate with the Ritz value from iteration $m+d$. More discussion about CG error estimates can be found in \rref{R:CGError} in the main part of the paper.

\paragraph{Relative Accuracy of Estimates}
 We plot the relative difference between an estimate~$E$ and the squared $\Amat$-norm 
error $\|\xvec_*-\xvec_m\|_\Amat^2$,
\begin{equation}
  \label{Eq:ErrErr}
  \rho(E) = \frac{\left|E - \|\xvec_*-\xvec_m\|_\Amat^2\right|}{\min\{E, \ \|\xvec_*-\xvec_m\|_\Amat^2\}},
\end{equation}
where $E$ can be $\mu$, $S(95)$, or one of the Gauss-Radau estimators. The minimum in the
denominator avoids favoring underestimate or overestimates, so that
smaller values $\rho(E)$ indicate more accurate estimators $E$.

\paragraph{Inputs}
The linear systems $\Amat\xvec_*=\bvec$ have a size $n=48$ or $n  = 11948$ symmetric positive definite matrix
$\Amat\in\Rnn$, solution vector of all ones $\xvec_* = \onevec \in\Rn$, and right-hand side vector $\bvec=\Amat\onevec$. 
The initial guess $\xvec_0 = \vzero\in\Rd$ is the zero vector. 

\subsubsection{Matrix with Small Dimension}\label{S:Small}

We first examine the error estimates on a size $n = 48$ random matrix $\Amat= \Qmat\Dmat\Qmat^T$ \cite[Section 2]{GreenbaumStrakos}, where
$\Qmat$ is a random orthogonal matrix with Haar distribution \cite[Section 3]{Stewart:Haar} and
$\Dmat$ is a diagonal matrix with eigenvalues  \cite{Strakos}
\begin{equation}
  \label{Eq:Strakos}
d_{ii} = 0.1 + \frac{i-1}{\dm-1}\left(10^4 -0.1\right)(0.9)^{\dm-i},\qquad 1\leq i \leq 48.
\end{equation}
The eigenvalue distribution is chosen 
to increase round off errors in CG, and is similar to the one in \cite[Section 11]{StrakosTichy}
for testing \eref{Eq:StrakosTichy}. The two-norm condition number is $\kappa_2(\Amat) = 10^5$.

Figures \ref{F:S48} and \ref{F:CG48} display the squared $\Amat$-norm error $\|\xvec_*-\xvec_m\|_{\Amat}^2$
and the estimates over 120 iterations.
The delay used to compute the error estimates and posterior covariance rank is $d = 4$. 
 
Figure~\ref{F:S48} plots the samples $s_i$ from \eref{Eq:SStat} on the left,
and the empirical upper credible interval $[\hat{\mu}, \hat S(95)]$ from (\ref{e_hat}) on the right.
Both underestimate the error in the initial period of slow convergence, 
cover the error during  fast  convergence, and underestimate the error once maximal attainable accuracy 
has been reached. The upper credible intervals appear deceptively thinner 
because of the logarithmic scale on the vertical axis.

\begin{figure}
  \centering
  \includegraphics[scale = .4]{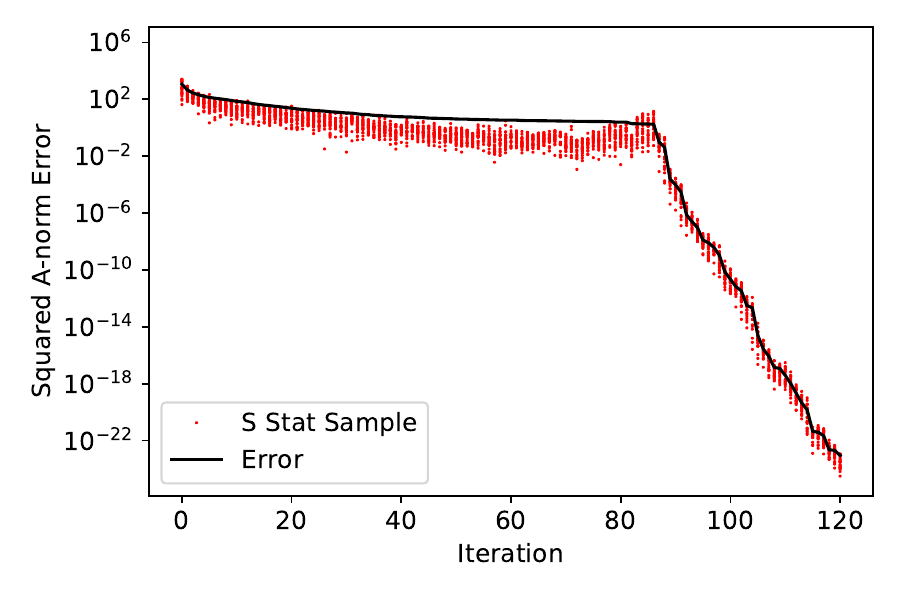}
  \includegraphics[scale = .4]{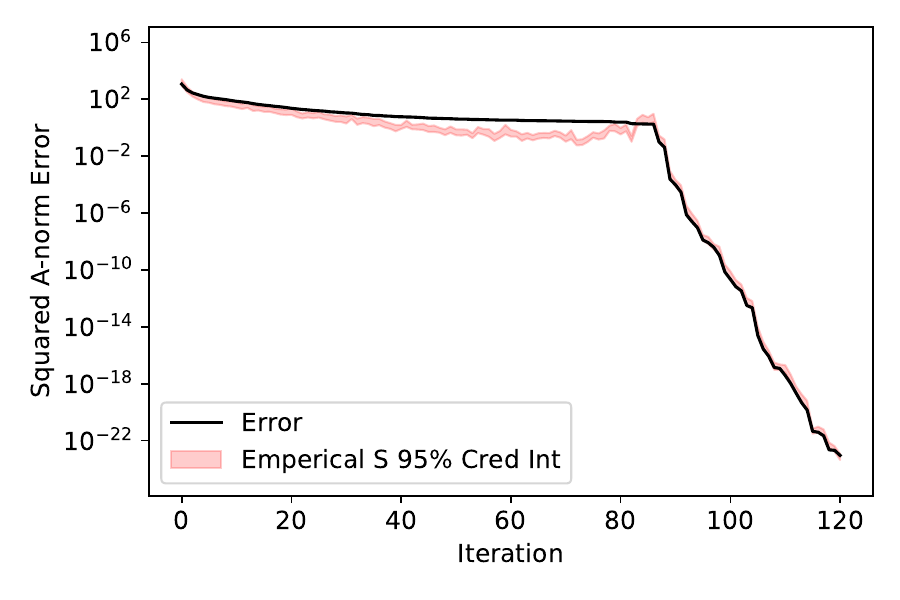}
  \caption{Squared $\Amat$-norm error $\|\xvec_*-\xvec_m\|_\Amat^2$ versus iteration $m$ 
    for the matrix $\Amat$  with eigenvalue distribution \eref{Eq:Strakos}.  On the left: samples $s_i$ from \eref{Eq:SStat}.
    On the right: empirical upper credible interval $[\hat{\mu}, \hat S(95)]$ from (\ref{e_hat}).}
  \label{F:S48}
\end{figure}

\begin{figure}
  \centering
  \includegraphics[scale = .4]{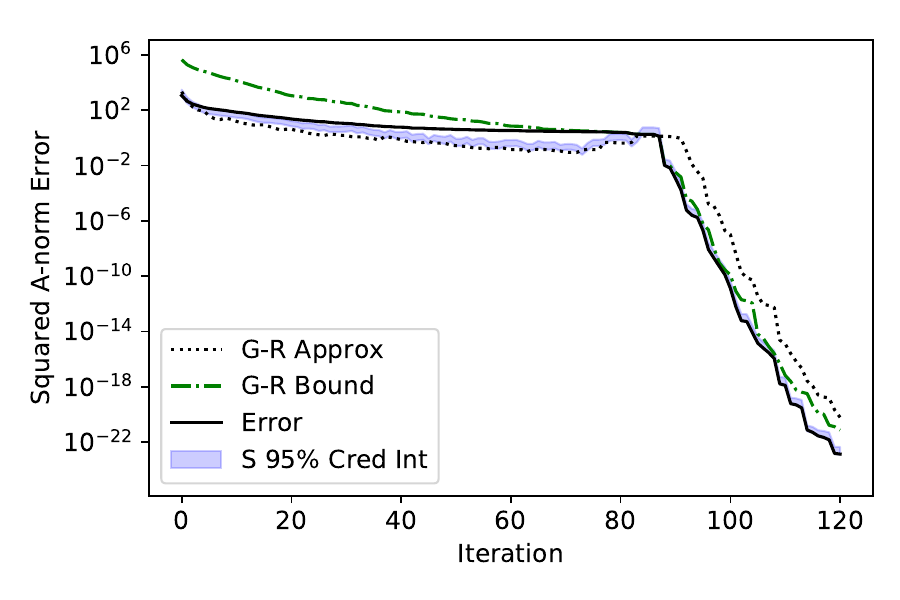}
  \includegraphics[scale = .4]{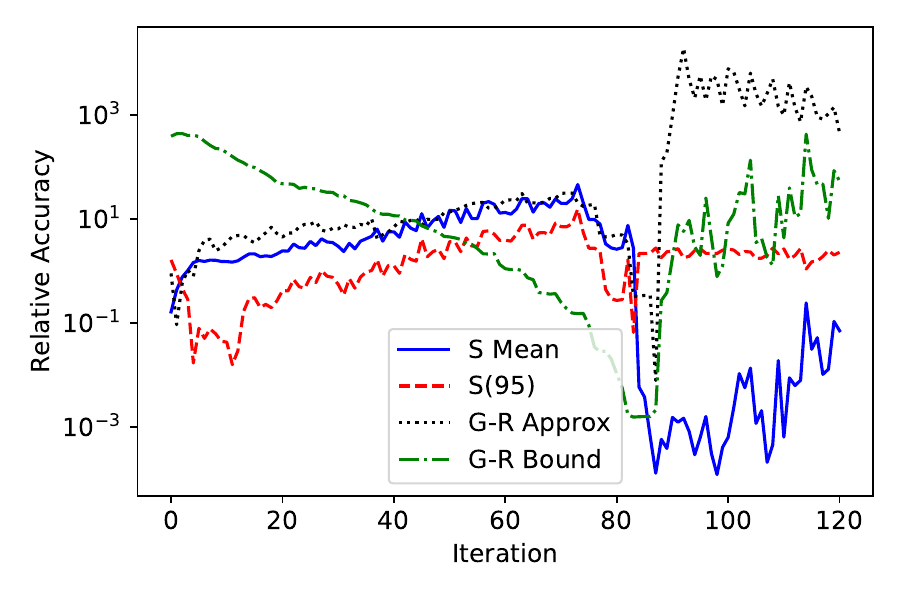}
  \caption{Squared $\Amat$-norm error $\|\xvec_*-\xvec_m\|_\Amat^2$ and relative accuracy versus iteration $m$
    for the matrix $\Amat$  with eigenvalue distribution \eref{Eq:Strakos}.  
    On the left: upper credible interval $[\mu, S(95)]$ from \eref{Eq:Conf}, 
    Gauss-Radau bound (a), and Gauss-Radau approximation (b).
    On the right: relative accuracy $\rho$ from \eref{Eq:ErrErr} for the mean $\mu$ and bound $S(95)$ from \eref{Eq:Conf} as well as the Gauss-Radau bound (a) and approximation (b).}\label{F:CG48}
\end{figure}

The left part of Figure~\ref{F:CG48} plots the 
credible interval $[\mu, S(95)]$ from \eref{Eq:Conf}; as well as 
the Gauss-Radau bound~(a) and approximation~(b). The Gauss-Radau bound is computed with a lower bound of $9.99\cdot 10^{-2}$ for the smallest eigenvalue $0.1$ of $\Amat$. 
The upper credible interval $[\mu, S(95)]$  behaves like its empirical version $[\hat{\mu}, \hat S(95)]$ 
in Figure~\ref{F:S48}, and therefore represents an accurate approximation.
The Gauss-Radau bound~(a) overestimates the error, and the Gauss-Radau approximation~(b) underestimates the error when convergence is slow and overestimates it when convergence is fast.  Note that 
the bound $S(95)$ underestimates the error during slow convergence and 
overestimates it during fast convergence.

The right part of \figref{F:CG48} plots the relative accuracy \eref{Eq:ErrErr} 
for the mean~$\mu$ from~\eref{Eq:Conf}, the  bound $S(95)$ from \eref{Eq:Conf}, the Gauss-Radau bound (a) and the Gauss-Radau approximation (b).
During the initial period of slow convergence, the bound $S(95)$ starts out as the most accurate until iteration 75
when the Gauss-Radau bound (a) becomes the most accurate.
During fast convergence, after iteration 90, the mean $\mu$ is most accurate.

\subsubsection{Matrix with Large Dimension}
\label{S:Large}
We now examine the error estimates on the same $n = 11948$  matrix as in \sref{S:ExpLarge}.

Figures \ref{F:BigS} and \ref{F:BigCG} display the squared $\Amat$-norm error $\|\xvec_*-\xvec_m\|_{\Amat}^2$
and the estimates over 2,700 iterations. The delay and posterior covariance has rank is $d = 50$. 

Figure~\ref{F:BigS} plots the samples $s_i$ from \eref{Eq:SStat} on the left,
and the empirical credible interval $[\hat{\mu}, \hat S(95)]$ from (\ref{e_hat}) on the right. 
Both behave as in Figure~\ref{F:S48} and closely underestimate the error.

\begin{figure}
  \centering
  \includegraphics[scale = .4]{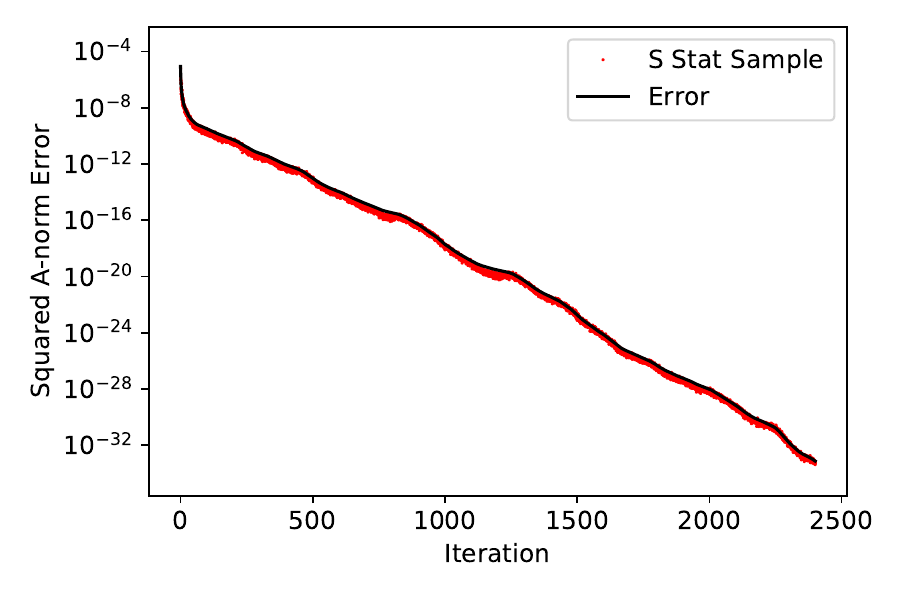}
  \includegraphics[scale = .4]{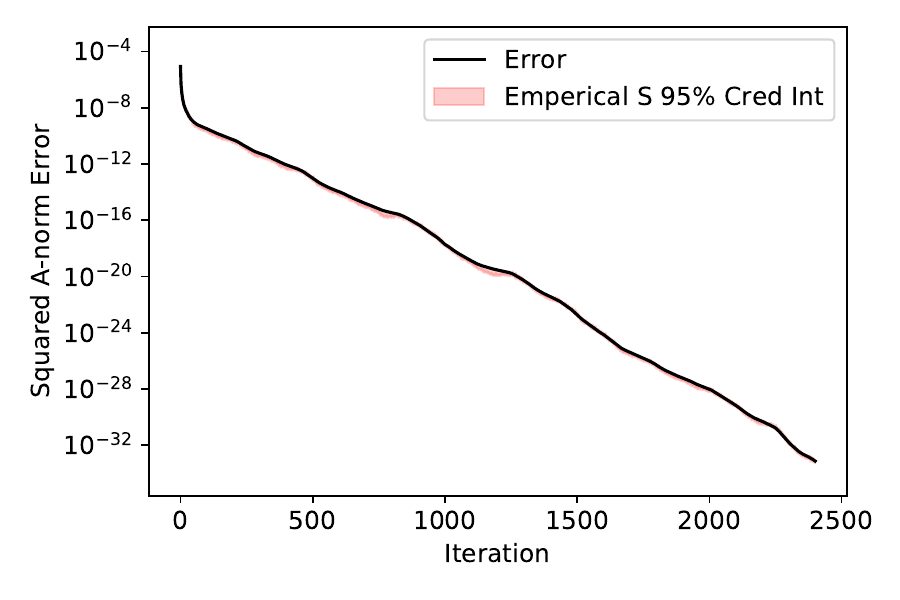}
  \caption{Squared $\Amat$-norm error $\|\xvec_*-\xvec_m\|_\Amat^2$ versus iteration $m$ 
    for the matrix $\Amat$ based on \texttt{BCSSTK18}.    On the left: samples $s_i$ from \eref{Eq:SStat}.
    On the right: empirical upper credible interval $[\hat{\mu}, \hat S(95)]$ from (\ref{e_hat}).}
  \label{F:BigS}
\end{figure}
  
\begin{figure}
  \centering
  \includegraphics[scale = .4]{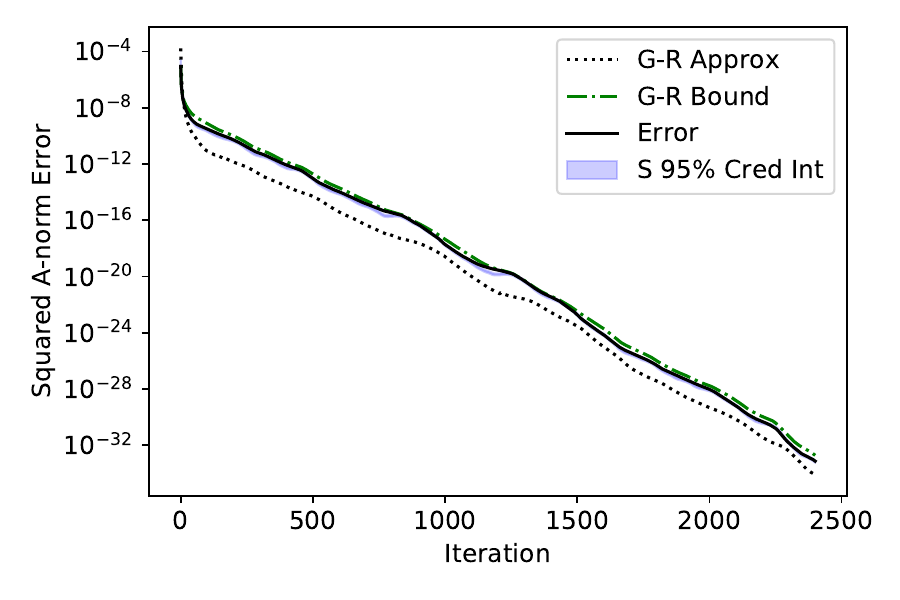}
  \includegraphics[scale = .4]{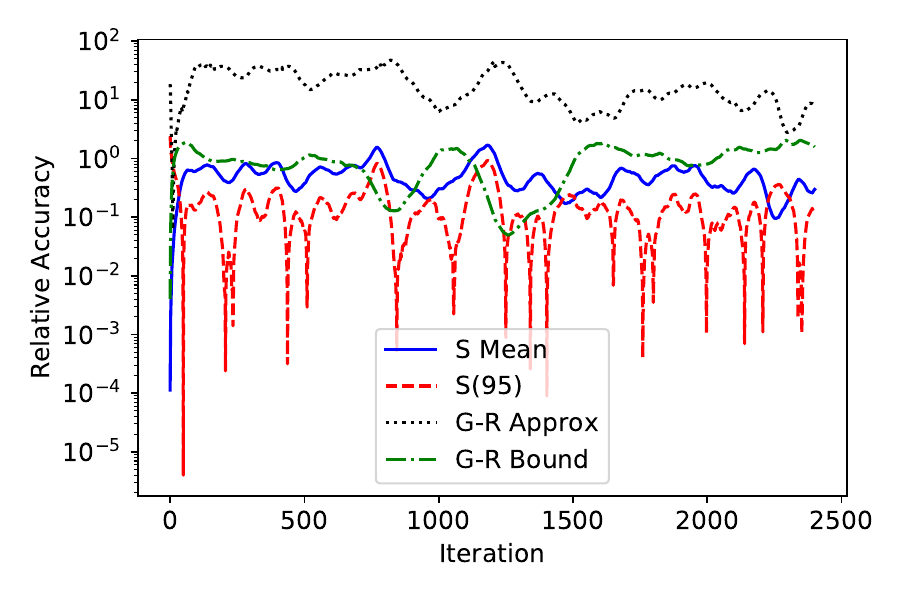}
  \caption{Squared $\Amat$-norm error $\|\xvec_*-\xvec_m\|_\Amat^2$ and relative accuracy versus iteration $m$
    for the matrix $\Amat$  based on \texttt{BCSSTK18}.
     On the left: upper credible interval $[\mu, S(95)]$ from \eref{Eq:Conf},
     and Gauss-Radau bound~(a) and approximation~(b).
    On the right: relative accuracy $\rho$ of the error estimates.}  \label{F:BigCG}
\end{figure}

The left part of Figure~\ref{F:BigCG} plots 
the credible interval $[\mu, S(95)]$ from \eref{Eq:Conf};
as well as the Gauss-Radau bound~(a) and approximation~(b).  The Gauss-Radau bound is computed with a lower bound of $9\cdot 10^{-14}$ for the smallest eigenvalue of $\Amat$. 
Again, the behavior is similar as in Figure~\ref{F:CG48}.

The right part of \figref{F:BigCG} plots the relative accuracy \eref{Eq:ErrErr} 
for the mean~$\mu$ from~\eref{Eq:Conf}, the bound $S(95)$ from \eref{Eq:Conf},  
and the Gauss-Radau approximation~(b).
As before, the bound $S(95)$ is generally the most accurate, followed by the mean~$\mu$.

\subsubsection{Summary of the Experiments}
\label{S:Summary}
Numerical experiments confirm that
the sampling based error estimate \eref{Eq:SStat}  performs as expected. In particular, the upper credible interval $[\mu, S(95)]$ in \eref{Eq:Conf} is an accurate approximation of the empirical 
upper credible interval $[\hat{\mu}, \hat S(95)]$ in (\ref{e_hat}).

The speed of convergence impacts the effectiveness of \eref{Eq:SStat} as an error estimate. The credible interval $[\mu,S(95)]$ \eref{Eq:Conf} depends on the mean $\mu$, and the distance between $\mu$ and the error depends on convergence speed. As a consequence, the mean and credible interval are far from the error when convergence is slow. 

Convergence speed can also affect the Gauss-Radau approximation (b). The convergence rate of the smallest Ritz value to the smallest eigenvalue is usually related to convergence of the $\Amat$-norm error \cite[Section 8.1 and Figures 3 and 4]{MeurantTichy19}. Slow convergence of the $\Amat$-norm means the Ritz value has not converged to the smallest eigenvalue, and this causes the Gauss-Radau approximation (b) to be less accurate.

In general, the bound $S(95)$ tends to underestimate the error during slow convergence and to cover the error 
during fast convergence. The distance between $S(95)$ and the error is competitive with the Gauss-Radau estimates.

\bibliography{BCGSources}

\makeatletter\@input{xx.tex}\makeatother